\documentclass[journal,twoside,web]{ieeecolor}
\usepackage{generic}
\usepackage[english]{babel}
\usepackage{cite}
\usepackage[euler]{textgreek}
\usepackage{amsmath,amssymb,amsfonts}
\usepackage{algorithmic}
\usepackage{graphicx}
\usepackage{textcomp}
\graphicspath{{images/}}
\usepackage[colorinlistoftodos]{todonotes}
\usepackage[T1]{fontenc}
\usepackage{lmodern}
\usepackage[scaled]{}
\usepackage{multicol}
\usepackage{multirow}
\usepackage{breqn}
\usepackage{float}
\usepackage{caption}
\usepackage{subcaption}
\usepackage{amstext}
\usepackage{xcolor}
\usepackage{bm}
\usepackage{trfsigns}
\usepackage{mathtools}
\usepackage[colorinlistoftodos]{todonotes}
\usepackage{authblk}
\usepackage{tcolorbox}
\usepackage{listings}
\usepackage{multicol}
\usepackage{multirow}
\usepackage{amstext}
\usepackage{mathtools}
\usepackage{lipsum}
\usepackage{graphicx}
\usepackage{epstopdf}
\usepackage{algorithm2e}
\usepackage{algorithmic}
\usepackage{subcaption}
\usepackage{placeins}
\usepackage{booktabs}
\usepackage{caption}
\usepackage{xcolor}
\usepackage{pseudocode}
\usepackage{filecontents}

\usepackage{epstopdf}
\usepackage{multicol}
\usepackage{multirow}
\usepackage{placeins}
\usepackage{booktabs}
\usepackage{mathrsfs} 
\usepackage{cleveref} 
\usepackage{accents}
\usepackage{empheq}
\usepackage{accents}
\usepackage{cite}
\usepackage{cleveref}

\usepackage{etoolbox}
\makeatletter
\patchcmd{\@makecaption}
{\scshape}
{}
{}
{}
\makeatletter
\patchcmd{\@makecaption}
{\\}
{ }
{}
{}
\makeatother

\newcommand{\norm}[1]{\left\lVert#1\right\rVert}
\newcommand{\dt}{\mathrm{d}t}
\newcommand{\dif}{\mathrm{d}}

\newcommand{\newsup}{\mathop{\smash{\mathrm{sup}}}}

\newtheorem{remark}{Remark}
\newtheorem{proposition}{Proposition}
\newtheorem{theorem}{Theorem}
\newtheorem{definition}{Definition}
\newtheorem{lemma}{Lemma}
\newtheorem{corollary}{Corollary}

\def\BibTeX{{\rm B\kern-.05em{\sc i\kern-.025em b}\kern-.08em
    T\kern-.1667em\lower.7ex\hbox{E}\kern-.125emX}}
\markboth{\journalname, VOL. XX, NO. XX, XXXX 2022}
{Pirastehzad \MakeLowercase{\textit{et al.}}: Comparison of Non-deterministic Linear Systems by ($\gamma,\delta$)-Similarity}
\begin{document}
\title{Comparison of Non-deterministic Linear Systems by (\textgamma,\textdelta)-Similarity}
\author{Armin Pirastehzad, Arjan van der Schaft, \IEEEmembership{Fellow, IEEE}, and Bart Besselink, \IEEEmembership{Member, IEEE}
	\thanks{This publication is part of the project “Integration of Data-drIven and model-based enGIneering in fuTure industriAL Technology With value chaIn optimizatioN (DIGITAL TWIN)” (with project number P18-03 project 1 of the research program Perspectief which is (partly) financed by the Dutch Research Council (NWO).}
	\thanks{All authors are with the Bernoulli Institute for Mathematics, Computer Science and Artificial Intelligence, University of Groningen, 9700 AK Groningen, The Netherlands (email: a.pirastehzad@rug.nl; a.j.van.der.schaft@rug.nl; 	b.besselink@rug.nl).}
}

\maketitle

\begin{abstract}
We introduce (\textgamma,\textdelta)-similarity, a notion of system comparison that measures to what extent two stable linear dynamical systems behave similarly in an input-output sense. This behavioral similarity is characterized by measuring the sensitivity of the difference between the two output trajectories in terms of the external inputs to the two potentially non-deterministic systems. As such, (\textgamma,\textdelta)-similarity is a notion that characterizes \emph{approximation} of input-output behavior, whereas existing notions of simulation target equivalence. Next, as this approximation is specified in terms of the $\mathcal{L}_2$ signal norm, (\textgamma,\textdelta)-similarity allows for integration with existing methods for analysis and synthesis of control systems, in particular, robust control techniques. We characterize the notion of (\textgamma,\textdelta)-similarity as a linear matrix inequality feasibility problem and derive its interpretation in terms of transfer matrices. Our study on the compositional properties of (\textgamma,\textdelta)-similarity shows that the notion is preserved through series and feedback interconnections. This highlights its potential application in compositional reasoning, namely abstraction and modular synthesis of large-scale interconnected dynamical systems. We further illustrate our results in an electrical network example.    
\end{abstract}

\begin{IEEEkeywords}
Abstraction, approximation, compositional reasoning, non-deterministic systems, simulation relation.
\end{IEEEkeywords}

\section{Introduction}
Modern engineering systems such as high-tech equipment, intelligent transportation systems, and smart grids have witnessed a massive growth in complexity. Due to lack of scalability in many existing analytic and synthetic methods, crucial problems such as specification verification (analysis), component replacement, and controller design (synthesis) have become increasingly challenging to address. These challenges motivate the adoption of system relations that compare the (dynamic) behavior of a complex system with that of a, typically simpler, one. Finding their applications in the analysis and synthesis of interconnected systems, system relations appear as the cornerstones of so-called modular approaches, \textit{e.g.,} \cite{EDA-053}, that allow for global system analysis (synthesis) on the basis of component-level analysis (synthesis). In this paper, we will develop a notion for \emph{approximate} comparison of system behavior, where the systems are possibly non-deterministic.

Various notions of system comparison have been conceived so far. Motivated by the notion of (bi)simulation in theoretical computer science \cite{milner1989communication}, the extension of (bi)simulation to continuous-time dynamical systems was explored in  \cite{lafferriere1997hybrid,pappas2000hierarchically,alur2000discrete,pappas2002consistent}
and formally presented in \cite{pappas2003bisimilar} using the framework of labeled transition systems. The notion of (bi)simulation was specialized to continuous-time systems in \cite{van2004equivalence}, where it was formulated as a modified disturbance decoupling problem and characterized algebraically. A notion of system equivalence, (bi)simulation finds its applications in compositional reasoning \cite{kerber2010compositional}, contract-based design \cite{besselink2019contracts,Shali2022series}, and controller synthesis \cite{tabuada2008controller,megawati2017abstraction}. A comprehensive treatment of the variations of (bi)simulation, such as alternating simulation, can be found in \cite{tabuada2009verification}.


As the notion of (bi)simulation and its variations fail to address the case where the external behavior of the systems are \emph{close} rather than identical, an approximate version of (bi)simulation was studied in  \cite{girard2007approximation,girard2007approximate}, where metric transition systems were used as a unified framework to treat both discrete and continuous systems. Approximate (bi)simulation relaxes the requirement on exact equivalence of external behaviors as it requires them to be sufficiently close, which is measured as a bound over the maximum distance between the external behaviors. For this reason, the notion of approximate (bi)simulation has become a keystone in a broad range of applications, namely compositional reasoning \cite{tazaki2008approximately}, controller synthesis \cite{girard2012controller,girard2009hierarchical,pola2019control} and fault detection \cite{pola2017approximate}. 
Several modifications to the original notion of approximate (bi)simulation have given rise to different variations.  An extensive treatment of modified versions of approximate (bi)simulation can be found in \cite{roehm2019model} and references therein. 

As far as continuous-time dynamical systems are concerned, the notion of approximate (bi)simulation and its variations characterize a bound on the distance between solution trajectories, using an $\mathcal{L}_\infty$ signal norm. Approximate (bi)simulation is however incompatible with many analytic and synthetic tools in control theory as they often employ $\mathcal{L}_2$ signal norm, \textit{e.g.,} in robust control \cite{van2000l2,zhou1998essentials} or dissipativity theory \cite{brogliato2007dissipative,scherer2000linear}. Indeed, the use of the $\mathcal{L}_2$ norm has many computational advantages. On the other hand, comparing trajectories on compact regions of the input-output space, approximate (bi)simulation evaluates the bound on the output error via a ``$\sup$-$\inf$'' optimization, \textit{i.e.,} two static games, which leads to additional \emph{complexities} and further computational costs.

The goal of this paper is to conceive a notion of system comparison, referred to as (\textgamma,\textdelta)-similarity, that 1) measures to what extent the input-output behaviors of continuous-time stable linear dynamical systems are similar in an $\mathcal{L}_2$ sense and 2) can be used for modular analysis (design) using compositional reasoning. The main contributions of the paper are as follows.

First, we introduce the notion of (\textgamma,\textdelta)-similarity which formulates behavioral similarity of two \emph{non-deterministic} stable linear dynamical systems as the sensitivity of the difference between the output trajectories to the external inputs to the two systems. These dynamical systems are non-deterministic in the sense that their solution trajectories may also depend on unknown disturbance signals. While (\textgamma,\textdelta)-similarity extends simulation in the sense that it characterizes input-output approximation rather than equivalence, it is different from approximate simulation. In contrast to approximate simulation where approximation metrics are constructed in an $\mathcal{L}_\infty$ sense, we develop a comparison metric that measures the sensitivity in terms of the $\mathcal{L}_2$ signal norm, which allows for the use of analytic and synthetic tools developed in control theory. This becomes crucial as one desires to enforce behavioral similarity through control design, \textit{i.e.,} one designs a controller to enforce solution trajectories to be as \emph{close} as possible. In such case, (\textgamma,\textdelta)-similarity gives one the benefit of using existing effective tools, \textit{e.g.,} $H_\infty$ synthesis \cite{green2012linear,scherer2000linear}. The notion of (\textgamma,\textdelta)-similarity proves to be suitable for system comparison as it admits reflexivity-like and transitivity-like properties.     

Second, we draw inspiration from $H_\infty$ control theory (see, \textit{e.g.,} \cite{green2012linear,trentelman2012control}) to show that (\textgamma,\textdelta)-similarity can be characterized through the existence of a solution to an algebraic Riccati equation (ARE). We then use ideas from dissipativity theory (see, \textit{e.g.,} \cite{van2000l2,willems1972dissipative,brogliato2007dissipative}) to formulate this ARE as a linear matrix inequality (LMI) feasibility problem, which is in terms of the system matrices and does not require the computation of system trajectories. In contrast to the notion of approximate simulation whose characterization is computationally demanding as it involves a sup-inf optimization problem, the notion of (\textgamma,\textdelta)-similarity is characterized as an LMI feasibility problem, for which many efficient computational tools are available. 

Third, we give an interpretation of the notion of (\textgamma,\textdelta)-similarity in terms of the transfer matrices of the systems. For deterministic dynamical systems, we show that the notion of (\textgamma,\textdelta)-similarity also measures the distance between the transfer matrices of the two systems in terms of the $H_\infty$ norm. 

Fourth, we study the extension of the notion of (\textgamma,\textdelta)-similarity to series and feedback interconnections of systems. We show that a series (feedback) interconnection is (\textgamma,\textdelta)-similar to another series (feedback) interconnection if each of its constituting components is (\textgamma,\textdelta)-similar to the corresponding component in the other interconnection (and a small-gain-type condition holds). This, as a result, indicates that the notion of (\textgamma,\textdelta)-similarity is preserved through series and feedback interconnections of systems, highlighting its applicability for compositional reasoning, which allows one to decompose interconnected system verification (design) into local component verification (design). 

The notion of (\textgamma,\textdelta)-similarity differs from notions of system comparison, \textit{e.g.,} \cite{rungger2016compositional,girard2009hierarchical,prabhakar2018simulations,prabhakar2016bisimulations,li2021input}, that employ (alternating) simulation relations to characterize the bound on the difference between the outputs in terms of the external inputs. Namely, such notions characterize a bound that also depends either on the system states, as in \cite{rungger2016compositional,girard2009hierarchical,prabhakar2018simulations,prabhakar2016bisimulations}, or system outputs, as in \cite{li2021input}. By contrast, (\textgamma,\textdelta)-similarity characterizes a bound solely in terms of the external signals, \textit{i.e.,} the bound does not depend on the system states. More importantly, these notions provide the bound in $\mathcal{L}_\infty$ signal norm, whereas (\textgamma,\textdelta)-similarity characterizes the bound in an $\mathcal{L}_2$ sense. While the former is not compatible with many effective analytic and synthetic tools in control theory (as they often employ $\mathcal{L}_2$ signal norm), the latter gives one the benefit of utilizing such tools. 

Despite comparing the external behavior of systems, the comparison metric developed for the notion of (\textgamma,\textdelta)-similarity also differs from the so-called gap metrics \cite{zames1980unstable,georgiou1997robustness}, where the external behavior of a nominal system and its perturbations are of interest. In contrast to these gap metrics, the notion of (\textgamma,\textdelta)-similarity addresses systems with state space equations of potentially different orders and disturbances of distinct dimensions.

The rest of this paper is organized as follows. In Section~\ref{DefinitionSection}, the notion of (\textgamma,\textdelta)-similarity is introduced, whereas its characterization is left for Section~\ref{CharacterizationSection}. In Section~\ref{DeterministicSystemsSection}, we study the notion of (\textgamma,\textdelta)-similarity for deterministic systems, whereas in Section~\ref{CompositionSection}, we employ (\textgamma,\textdelta)-similarity for compositional reasoning by investigating compositional properties. We further demonstrate our results in an illustrative example in Section~\ref{IllustrativeSection} and, finally, conclude the paper in Section~\ref{ConclusionSection}.

\paragraph*{Notation} For a matrix $M \in\mathbb{R}^{n\times n}$, we respectively denote its spectrum, largest eigenvalue, and largest singular value by $\operatorname{spec}(M)$, $\lambda_{\text{max}}(M)$, and $\sigma_{\text{max}}(M)$.  We define the operator $\operatorname{col}(\cdot,\cdot)$ such that $\operatorname{col}(x_1,x_2) = (x_1^T,x_2^T)^T$, for any vectors $x_1\in\mathbb{R}^{n_1}$ and $x_2\in\mathbb{R}^{n_2}$. Given a vector $x\in\mathbb{R}^n$ and a positive definite symmetric matrix $M$, $|x| = (x^Tx)^{1/2}$ and $|x|_{M} \  = (x^TMx)^{1/2}$ denote the Euclidean norm and the weighted Euclidean norm, respectively. We denote by $\mathcal{L}_2$ the space of measurable functions $u: [0,\infty) \rightarrow\mathbb{R}^n$ such that $\int_{0}^{\infty}|u(t)|^2\dt < \infty$. Accordingly, the $\mathcal{L}_2$ space is endowed with the norm $\norm{u} = (\int_{0}^{\infty}|u(t)|^2\dt)^{1/2}$. Given any $T>0$, we denote by $\mathcal{L}_2[0,T]$ the space of measurable functions $u: [0,\infty) \rightarrow\mathbb{R}^n$ such that $\int_{0}^{T}|u(t)|^2\dt < \infty$ and endow it with the norm $\norm{u}_{[0,T]} = (\int_{0}^{T}|u(t)|^2\dt)^{1/2}$. Eventually, given a positive definite matrix $M$, we define $\norm{u_1}_M = (\int_{0}^{\infty}|u_1(t)|_M^2\dt)^{1/2}$ and $\norm{u_2}_{[0,T],M} = (\int_{0}^{T}|u_2(t)|_M^2\dt)^{1/2}$, for any $u_1\in\mathcal{L}_2$ and $u_2\in\mathcal{L}_2[0,T]$. 

\section{(\textgamma,\textdelta)-Similarity}\label{DefinitionSection}
Consider continuous-time linear systems of the form 
\begin{equation}\label{System}
\bm{\Sigma}_i: \left\{\begin{aligned}
\dot{x}_i &= A_ix_i + B_iu_i + E_id_i,\\
y_i &= C_ix_i,
\end{aligned}
\right.
\end{equation}
where $x_i \in \mathbb{R}^{n_i}$, $u_i \in \mathbb{R}^{m_i}$, $d_i \in \mathbb{R}^{q_i}$, and $y_i \in \mathbb{R}^{p_i}$ represent system state, input, disturbance, and output, respectively. Throughout this paper, we assume that \eqref{System} is $0$-asymptotically stable, \textit{i.e.,} the system is asymptotically stable in the absence of inputs and disturbances, and therefore, the state matrix $A_i$ is Hurwitz. We denote by $y_i(t;t_0,x_{i,0},u_i,d_i)$ the output solution, at time $t$, of \eqref{System} for initial condition $x_i(t_0) = x_{i,0}$, input $u_i$, and the disturbance $d_i$. While the system state $x_i$ is considered as an internal variable, the input $u_i$ and the output $y_i$ are regarded as external variables through which $\bm{\Sigma}_i$ interacts with its environment. The presence of the disturbance $d_i$ leads to nondeterminism in the sense that trajectories do not solely depend on the initial condition $x_i(t_0)$ and the input $u_i$ (such non-deterministic systems may, for example, arise from abstraction, see for example \cite{pappas2000hierarchically}). 


We are interested in \emph{comparing} systems of the form \eqref{System} according to the following definition. 
\begin{definition}\label{Definition}
	Given $0$-asymptotically stable systems $\bm{\Sigma}_1$ and $\bm{\Sigma}_2$, for $\gamma,\delta>0$, the system $\bm{\Sigma}_2$ is said to be (\textgamma,\textdelta)-similar to system $\bm{\Sigma}_1$, denoted by $\bm{\Sigma}_1 \preccurlyeq_{\gamma,\delta} \bm{\Sigma}_2$, if there exist constants $\varepsilon,\eta,\mu>0$ such that for every input $u_1,u_2 \in \mathcal{L}_2$ and every disturbance $d_1 \in \mathcal{L}_2$, there exists a disturbance  $d_2 \in \mathcal{L}_2$ such that 
	\begin{equation}\label{GammaConformanceInequality}
	\begin{aligned}
	\norm{y_1-y_2}^2 \leq&\; \gamma\norm{u_1-u_2}^2 + (\delta-\varepsilon)\norm{\begin{bmatrix}
		u_1\\u_2
		\end{bmatrix}}^2\\
	& + (\mu-\varepsilon)\norm{d_1}^2 - \eta\norm{d_2}^2,
	\end{aligned}
	\end{equation}
	where $y_i(t) = y_i(t;0,0,u_i,d_i)$ for $i=1,2$.
\end{definition}

The notion of (\textgamma,\textdelta)-similarity in Definition~\ref{Definition} measures the similarity of the trajectories of $\bm{\Sigma}_1$ and $\bm{\Sigma}_2$ in terms of their input-output behavior. In \eqref{GammaConformanceInequality}, the (small) parameter $\varepsilon$ is merely present for technical reasons which will be explained later. 
When considering deterministic systems, \textit{i.e.,} in the absence of disturbances $d_i$, \eqref{GammaConformanceInequality} gives a bound on the output trajectories in terms of the input trajectories. In particular, the parameter $\gamma$ measures to what extent a dissimilarity in inputs leads to a deviation in outputs. On the other hand, the parameter $\delta$ accounts for how each input affects this deviation, which is essential as it also provides a bound on output dissimilarity in the case where $u_1 = u_2$. Then, for non-deterministic systems, constants $\mu$ and $\eta$ specify how nondeterminism contributes to output deviation. More specifically, when input signals $u_1$ and $u_2$ are given, Definition~\ref{Definition} states that for any trajectory of $\bm{\Sigma}_1$ (now determined by $d_1$), there exists a trajectory of $\bm{\Sigma}_2$ (through the choice of $d_2$) that approximates it according to \eqref{GammaConformanceInequality}. For this reason, the notion of (\textgamma,\textdelta)-similarity  provides us with a criterion which measures to what extent the input-output behavior of $\bm{\Sigma}_1$ is contained in that of $\bm{\Sigma}_2$. 
\begin{remark}
	Definition \ref{Definition} asks for the \emph{existence} of $\mu,\eta>0$ rather than giving them a priori. This allows the notion of (\textgamma,\textdelta)-similarity to properly measure how close the input-output behaviors of the systems are. In order to better appreciate this, consider the systems
	\begin{equation*}
	\begin{aligned}
	&\bm{\Sigma}_1: \ \begin{cases}
	\dot{x}_1 = -x_1 + u_1 + 2d_1,\\
	y_1 = x_1,
	\end{cases}
	\\
	&\bm{\Sigma}_2: \ \begin{cases}
	\dot{x}_2 = -x_2 + u_2 + d_2,\\
	y_2 = x_2.
	\end{cases}
	\end{aligned}
	\end{equation*}
	Taking $d_2 = 2d_1$, it is clear that there exists a $\gamma>0$ such that for any $\delta>0$, the system $\bm{\Sigma}_2$ is (\textgamma,\textdelta)-similar to system $\bm{\Sigma}_1$ with any choice $\varepsilon<\delta$ and $\mu > \eta + \varepsilon$. However, if the parameters $\mu$ and $\eta$ were fixed, \eqref{GammaConformanceInequality} would not hold for any $\delta>0$ unless the values of $\mu$ and $\eta$ were such that $\mu > \eta + \varepsilon$. Obviously, in this case, the notion would fail to properly measure the similarity of the input-output behavior of systems.
\end{remark}
\begin{remark}\label{ComparisonSimulation}
	The notion of (\textgamma,\textdelta)-similarity can be regarded as an extension of the notion of simulation, see \cite{van2004equivalence,lafferriere1997hybrid,pappas2000hierarchically,pappas2003bisimilar,pappas2002consistent}. Namely, whereas simulation is concerned with input-output \emph{equivalence} of non-deterministic systems, (\textgamma,\textdelta)-similarity characterizes \emph{approximation} of input-output behavior as it asks for neither identical inputs nor identical outputs. Still, (\textgamma,\textdelta)-similarity does \emph{not} follow from simulation relations that incorporate external inputs and treat disturbances as sources of non-determinism, \textit{e.g.,} see  \cite{van2004equivalence}. This is due to the fact that such relations are only concerned with the case where systems are subject to a joint input and does not provide any results on the output similarity when inputs are not the same (as in Definition~\ref{Definition}).
\end{remark}
\begin{remark}\label{Application}
	The notion of (\textgamma,\textdelta)-similarity finds its application in \emph{specification verification}. In particular, one may express specifications in terms of the input-output trajectories of a dynamical system (as in \cite{kerber2010compositional,besselink2019contracts,Shali2022series}) and utilize (\textgamma,\textdelta)-similarity to \emph{verify} whether the external behavior of a system satisfies such specifications in an approximate sense. 
	In this scheme, with $\bm{S}$ given as a dynamical system of the form \eqref{System} whose input-output trajectories capture the specifications, $\bm{\Sigma}\preccurlyeq_{\gamma,\delta}\bm{S}$ indicates to what extent the external behavior of $\bm{\Sigma}$ satisfies such specifications. 
	One may also use (\textgamma,\textdelta)-similarity for \emph{component replacement} in network systems. Utilizing (\textgamma,\textdelta)-similarity to measure the behavioral similarity of two components, one verifies whether such components admit sufficiently similar external behaviors such that one can be replaced by another without jeopardizing the performance of the network. This application is studied for electrical networks in Section~\ref{IllustrativeSection}. Finally, (\textgamma,\textdelta)-similarity also becomes relevant in \emph{system abstraction} as one can employ (\textgamma,\textdelta)-similarity to compare the behavioral similarity of a high-dimensional system with a low-dimensional system, \textit{i.e.,} one measures how precise a low-dimensional system \emph{abstracts} the external behavior of a high-dimensional system.  
\end{remark}

The following result further motivates (\textgamma,\textdelta)-similarity as notion of system comparison. 
\begin{proposition}\label{ReflexivityTransitivity}
	The notion of (\textgamma,\textdelta)-similarity admits the following properties:
	\begin{enumerate}
		\item There exists a $\gamma>0$ such that the system $\bm{\Sigma}$ is (\textgamma,\textdelta)-similar to itself for any $\delta>0$. 
		\item Suppose $\bm{\Sigma}_1 \preccurlyeq_{\gamma_{12},\delta_{12}} \bm{\Sigma}_2$ and $\bm{\Sigma}_2 \preccurlyeq_{\gamma_{23},\delta_{23}} \bm{\Sigma}_3$. Then, there exist $\gamma_{13},\delta_{13}>0$ such that $\bm{\Sigma}_1 \preccurlyeq_{\gamma_{13},\delta_{13}} \bm{\Sigma}_3$.  
	\end{enumerate}
\end{proposition}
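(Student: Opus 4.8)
For part (1), reflexivity-like property, I would take $\bm{\Sigma}_1 = \bm{\Sigma}_2 = \bm{\Sigma}$ and simply choose $d_2 = d_1$. Then $y_1 = y_2$ pointwise, so $\norm{y_1-y_2}^2 = 0$ and the right-hand side of \eqref{GammaConformanceInequality} becomes $\gamma\norm{u_1-u_2}^2 + (\delta-\varepsilon)\norm{\operatorname{col}(u_1,u_2)}^2 + (\mu-\varepsilon)\norm{d_1}^2 - \eta\norm{d_1}^2$. It suffices to pick $\varepsilon$ small, $\eta$ small, and $\mu$ with $\mu-\varepsilon-\eta \geq 0$; for instance $\eta = \varepsilon$ and $\mu = 2\varepsilon$ works, as does any $\mu \geq \eta + \varepsilon$. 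Any $\gamma > 0$ (say $\gamma = 1$) and any $\delta > \varepsilon$ then satisfies the inequality, since every remaining term is nonnegative. This handles (1) with essentially no computation; the only point to note is that the existential quantifiers on $\varepsilon,\eta,\mu$ in Definition~\ref{Definition} make the choice trivial.

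For part (2), transitivity-like property, the plan is to chain the two similarity relations. Fix inputs $u_1, u_3 \in \mathcal{L}_2$ and a disturbance $d_1 \in \mathcal{L}_2$. I introduce an intermediate input $u_2 \in \mathcal{L}_2$ for $\bm{\Sigma}_2$ that will be chosen later; the natural candidate is $u_2 = \tfrac{1}{2}(u_1 + u_3)$, or possibly $u_2 = u_1$ or $u_2 = u_3$, and the choice will be dictated by how the cross terms combine. Apply $\bm{\Sigma}_1 \preccurlyeq_{\gamma_{12},\delta_{12}} \bm{\Sigma}_2$ with inputs $u_1, u_2$ and disturbance $d_1$ to obtain $d_2 \in \mathcal{L}_2$ with
\begin{equation*}
\norm{y_1-y_2}^2 \leq \gamma_{12}\norm{u_1-u_2}^2 + (\delta_{12}-\varepsilon_{12})\norm{\operatorname{col}(u_1,u_2)}^2 + (\mu_{12}-\varepsilon_{12})\norm{d_1}^2 - \eta_{12}\norm{d_2}^2.
\end{equation*}
Then apply $\bm{\Sigma}_2 \preccurlyeq_{\gamma_{23},\delta_{23}} \bm{\Sigma}_3$ with inputs $u_2, u_3$ and the disturbance $d_2$ just produced, obtaining $d_3 \in \mathcal{L}_2$ with an analogous bound on $\norm{y_2-y_3}^2$. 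Now use the triangle inequality in $\mathcal{L}_2$ together with Young's inequality $\norm{y_1-y_3}^2 \leq (1+\theta)\norm{y_1-y_2}^2 + (1+\theta^{-1})\norm{y_2-y_3}^2$ for a parameter $\theta > 0$ to be optimized, substitute the two bounds, and collect terms.

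The main obstacle, and the step needing care, is bookkeeping the quadratic forms in $u_1, u_2, u_3$ so that the $\norm{u_2}^2$ contributions (which are not "external" from the point of view of $\bm{\Sigma}_1 \preccurlyeq \bm{\Sigma}_3$) get absorbed into $\gamma_{13}\norm{u_1-u_3}^2 + \delta_{13}\norm{\operatorname{col}(u_1,u_3)}^2$. Concretely, after the substitution one has terms of the form $\norm{u_1-u_2}^2$, $\norm{u_2-u_3}^2$, $\norm{u_1}^2$, $\norm{u_2}^2$, $\norm{u_3}^2$; with $u_2 = \tfrac12(u_1+u_3)$ one gets $u_1 - u_2 = \tfrac12(u_1-u_3)$ and $u_2-u_3 = \tfrac12(u_1 - u_3)$, so the difference terms become $\tfrac14\norm{u_1-u_3}^2$, while $\norm{u_2}^2 \leq \tfrac12(\norm{u_1}^2 + \norm{u_3}^2) \leq \tfrac12\norm{\operatorname{col}(u_1,u_3)}^2$, keeping everything expressible in the two allowed quadratic forms. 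Finally the disturbance terms: the coefficient of $\norm{d_2}^2$ is $(1+\theta^{-1})(\mu_{23}-\varepsilon_{23}) - (1+\theta)\eta_{12}$, which must be made $\leq 0$ by choosing $\theta$ large enough — this is the one genuine constraint, and it is feasible precisely because $\eta_{12} > 0$; once $\theta$ is fixed we set $\mu_{13}$, say, equal to $(1+\theta)(\mu_{12}-\varepsilon_{12}) + \varepsilon_{13}$ and $\eta_{13} = (1+\theta^{-1})\eta_{23}$, and read off $\gamma_{13}, \delta_{13}$ from the collected coefficients (taking $\varepsilon_{13}$ small enough that the $(\delta_{13}-\varepsilon_{13})$ slot still dominates the accumulated $\norm{\operatorname{col}(u_1,u_3)}^2$ coefficient). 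All quantities produced are positive, so the resulting inequality is exactly \eqref{GammaConformanceInequality} for the triple with suitable $\varepsilon_{13},\eta_{13},\mu_{13}$, establishing $\bm{\Sigma}_1 \preccurlyeq_{\gamma_{13},\delta_{13}} \bm{\Sigma}_3$.
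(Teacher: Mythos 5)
Your argument for part (1) has a genuine gap. In Definition~\ref{Definition} the inputs $u_1$ and $u_2$ are quantified independently: the inequality \eqref{GammaConformanceInequality} must hold for \emph{every} pair $u_1,u_2\in\mathcal{L}_2$, not only for $u_1=u_2$. Hence, even with $\bm{\Sigma}_1=\bm{\Sigma}_2=\bm{\Sigma}$ and $d_2=d_1$, the outputs satisfy $y_1\neq y_2$ whenever $u_1\neq u_2$, and your claim that $\norm{y_1-y_2}^2=0$ fails. What the choice $d_2=d_1$ actually buys is that the error state $\tilde{x}=x_1-x_2$ obeys $\dot{\tilde{x}}=A\tilde{x}+B(u_1-u_2)$, $\tilde{y}=C\tilde{x}$, and one must then invoke the finite $\mathcal{L}_2$-gain of this stable system to get $\norm{y_1-y_2}^2\leq\gamma\norm{u_1-u_2}^2$ for some $\gamma>0$ determined by $(A,B,C)$. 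This is why the proposition asserts only the \emph{existence} of a $\gamma$ (while $\delta$ is arbitrary): your remark that ``any $\gamma>0$, say $\gamma=1$, works'' is false and should have been a warning sign, since a system with $\mathcal{L}_2$-gain exceeding $1$ is not $(1,\delta)$-similar to itself for small $\delta$. Once the gain bound is in place, your bookkeeping of $\varepsilon,\eta,\mu$ (any $\varepsilon\leq\delta$, $\mu\geq\eta+\varepsilon$) is exactly right.

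Part (2) is essentially correct and follows the paper's route: the same intermediate input $u_2=\tfrac12(u_1+u_3)$, the same absorption of $\norm{u_2}^2$ via $\norm{u_2}^2\leq\tfrac12(\norm{u_1}^2+\norm{u_3}^2)$, and the same mechanism for killing the $\norm{d_2}^2$ terms. The only cosmetic difference is that you cancel them with a Young's-inequality parameter $\theta$ chosen so that $(1+\theta^{-1})(\mu_{23}-\varepsilon_{23})\leq(1+\theta)\eta_{12}$, whereas the paper multiplies the two chained inequalities by $\mu_{23}$ and $\eta_{12}$ respectively and adds, achieving exact cancellation; the two devices are interchangeable and yield constants of the same form.
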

\begin{proof}
	For the first part, consider the system $\bm{\Sigma}$ subject to the external input $u'$ and the disturbance $d'$. Let $u,u' \in \mathcal{L}_2$ and $d' \in \mathcal{L}_2$ be given and define $\tilde{x}_i := x - x'$ and $\tilde{y} := y - y'$. Taking $d = d'$, one obtains
	\begin{equation*}
	\begin{aligned}
	\dot{\tilde{x}} &= A\tilde{x} + B(u - u'),\\
	\tilde{y} &= C\tilde{x}.
	\end{aligned}
	\end{equation*}  
	As $A$ is Hurwitz and $u,u'\in\mathcal{L}_2$, there exists a $\gamma>0$ such that \cite[Section 4.1]{desoer2009feedback}
	\begin{equation*}
	\norm{y - y'}^2 \leq \gamma \norm{u - u'}^2,
	\end{equation*}
	where $y(t) = y(t;0,0,u,d)$. This can be written as \eqref{GammaConformanceInequality} for any given $\delta>0$, as long as one takes $\varepsilon \leq \delta$ and $\mu \geq \eta + \epsilon$.
	
	As for the second part, let $u_1,u_3\in \mathcal{L}_2$ and $d_1 \in \mathcal{L}_2$ be given. After choosing $u_2 = \frac{1}{2}(u_1+u_3)$, it follows from $\bm{\Sigma}_1 \preccurlyeq_{\gamma_{12},\delta_{12}} \bm{\Sigma}_2$ that there exist $\varepsilon_{12},\eta_{12},\mu_{12}>0$ and $d_2\in\mathcal{L}_2$ such that 
	\begin{equation*}
	\begin{aligned}
	\norm{y_1-y_2}^2 \leq &\;\frac{\gamma_{12}}{4}\norm{u_1-u_3}^2\\
	& + (\delta_{12}-\varepsilon_{12})\norm{\begin{bmatrix}
		u_1\\\frac{1}{2}(u_1+u_3)
		\end{bmatrix}}^2\\
	&+ (\mu_{12}-\varepsilon_{12})\norm{d_1}^2 - \eta_{12}\norm{d_2}^2,
	\end{aligned}
	\end{equation*}
	which, by triangular and Cauchy-Schwartz inequalities, leads to
	\begin{equation}\label{TransitivityInequalitySubstitution1}
	\begin{aligned}
	\norm{y_1-y_2}^2\leq &\;\frac{\gamma_{12}}{4}\norm{u_1-u_3}^2\\
	& + (\delta_{12}-\varepsilon_{12})\left(\frac{3}{2}\norm{u_1}^2 + \frac{1}{2}\norm{u_3}^2\right)\\
	&+ (\mu_{12}-\varepsilon_{12})\norm{d_1}^2 - \eta_{12}\norm{d_2}^2.
	\end{aligned}
	\end{equation} 
	For this $d_2$, and $u_2$ as chosen above, we have from $\bm{\Sigma}_2 \preccurlyeq_{\gamma_{23},\delta_{23}} \bm{\Sigma}_3$ that there exist $\varepsilon_{23},\eta_{23},\mu_{23}>0$ and $d_3 \in \mathcal{L}_2$ such that 
	\begin{equation}\label{TransitivityInequalitySubstitution2}
	\begin{aligned}
	\norm{y_2-y_3}^2 \leq &\;\frac{\gamma_{23}}{4}\norm{u_1-u_3}^2\\& + (\delta_{23}-\varepsilon_{23})\left(\frac{1}{2}\norm{u_1}^2 + \frac{3}{2}\norm{u_3}^2\right)\\
	&+ \mu_{23}\norm{d_2^*}^2 - \eta_{23}\norm{d_3}^,
	\end{aligned}
	\end{equation}
	Multiplying \eqref{TransitivityInequalitySubstitution1} and \eqref{TransitivityInequalitySubstitution2} by $\mu_{23}$ and $\eta_{12}$, respectively, and subsequently adding the resulting inequalities yields
	\begin{equation*}\label{Conformance13}
	\begin{aligned}
	\norm{y_1-y_3}^2 \leq & \; \frac{\mu_{23}\gamma_{12}+\eta_{12}\gamma_{23}}{2\min\{\mu_{23},\eta_{12}\}}\norm{u_1-u_3}^2\\ &+ \frac{3\left( \mu_{23}(\delta_{12}-\varepsilon_{12}) + \eta_{12}(\delta_{23}-\varepsilon_{23}) \right)}{\min\{\mu_{23},\eta_{12}\}}\norm{\begin{bmatrix}
		u_1\\u_3
		\end{bmatrix}}^2\\
	&+\frac{2\mu_{23}(\mu_{12}-\varepsilon_{12})}{\min\{\mu_{23},\eta_{12}\}}\norm{d_1}^2\\
	&- \frac{2\eta_{12}\eta_{23}}{\min\{\mu_{23},\eta_{12}\}}\norm{d_3}^2.
	\end{aligned}
	\end{equation*}
	It now follows from Definition~\ref{Definition} that $\bm{\Sigma}_1 \preccurlyeq_{\gamma_{13},\delta_{13}} \bm{\Sigma}_3$ with
	\begin{equation*}
	\gamma_{13} = \frac{\mu_{23}\gamma_{12}+\eta_{12}\gamma_{23}}{2\min\{\mu_{23},\eta_{12}\}}, \;\; \delta_{13} = \frac{3(\mu_{23}\delta_{12}+\eta_{12}\delta_{23})}{\min\{\mu_{23},\eta_{12}\}},
	\end{equation*}
	which proves the desired result. 
\end{proof}

The first  and the second statements of Proposition \ref{ReflexivityTransitivity} can be regarded as ``reflexivity-like'' and ``transitivity-like'' properties, respectively.
These properties, which are essential for comparison purposes, make (\textgamma,\textdelta)-similarity appealing for system comparison.  
\section{Characterization}\label{CharacterizationSection}
In this section, we will give an algebraic characterization of (\textgamma,\textdelta)-similarity, and by doing so, we eventually propose an easily verifiable criterion for the notion.

To do so, first, we provide an alternative formulation by defining the matrices
\begin{equation}\label{QR}
Q = \begin{bmatrix}
(\gamma+\delta)I&-\gamma I&0\\
-\gamma I&(\gamma+\delta)I&0\\0&0&\mu I
\end{bmatrix}, \;\;\;\; R = \begin{bmatrix}
I&0\\0&\eta I
\end{bmatrix},
\end{equation}
and rewriting \eqref{GammaConformanceInequality} as
\begin{equation}\label{AlternativeGammaConformanceInequality}
\begin{aligned}
\norm{\begin{bmatrix}
	y_1-y_2\\ d_2
	\end{bmatrix}}_{R}^2 \leq \norm{\begin{bmatrix}
	u_1\\u_2\\d_1
	\end{bmatrix}}_{Q}^2 - \varepsilon \norm{\begin{bmatrix}
	u_1\\u_2\\d_1
	\end{bmatrix}}^2,
\end{aligned}
\end{equation}
accordingly. Let $x:=\operatorname{col}(x_1,x_2)$, $w := \operatorname{col}(u_1,u_2,d_1)$, and 
$z = \operatorname{col}(y_1-y_2,d_2)$; motivated by the alternative representation \eqref{AlternativeGammaConformanceInequality},
we collect the dynamics of $\bm{\Sigma}_1$ and $\bm{\Sigma}_2$ to obtain the composite system 
\begin{equation}\label{CompositeSystem}
\begin{aligned}
\dot{x} &= Ax + Bd_2 + Ew,\\
z &= Cx + Dd_2, 
\end{aligned}
\end{equation}
where
\begin{equation}\label{CompositeSystemMatrices}
\begin{aligned}
A &= \begin{bmatrix}
A_1&0\\0&A_2
\end{bmatrix}, &&B = \begin{bmatrix}
0\\E_2
\end{bmatrix}, \; \; &&E = \begin{bmatrix}
B_1&0&E_1\\0&B_2&0
\end{bmatrix},\\
C &= \begin{bmatrix}
C_1&-C_2\\0&0
\end{bmatrix}, &&D = \begin{bmatrix}
0\\I
\end{bmatrix}.
\end{aligned}
\end{equation}
Needless to say, \eqref{CompositeSystem} is $0$-asymptotically stable as both $A_1$ and $A_2$ are Hurwitz by assumption. We denote by $z(t;t_0,x_0,d_2,w)$ the output solution (at time $t$) of \eqref{CompositeSystem} for initial condition $x(t_0) = x_0$ and input signals $d_2$ and $w$. Given a terminal time $T$,  we define the cost function
\begin{equation}\label{CostFunction}
J_T(\tau, x_s, d_2,w) = \int_{\tau}^{T}|z(t)|_R^2 \ - \ |w(t)|_Q^2 \ \dt,
\end{equation}
where $z(t) = z(t;\tau,x_s,d_2,w)$. Following this formulation, the following result alternatively states Definition \ref{Definition} in terms of the composite system \eqref{CompositeSystem} and cost function \eqref{CostFunction}.

\begin{proposition}\label{AlternativeDefinition}
	For $\gamma,\delta>0$, the system $\bm{\Sigma}_2$ is (\textgamma,\textdelta)-similar to system $\bm{\Sigma}_1$ if and only if there exist matrices $Q$ and $R$ with a structure as in \eqref{QR} and a constant $\varepsilon>0$ such that
	\begin{equation}\label{DissipativityRepresentation} \forall w \in \mathcal{L}_2, \exists d_2\in\mathcal{L}_2 : \; \lim\limits_{T\rightarrow\infty}J_T(0,0, d_2,w) \leq -\varepsilon \norm{w}^2.
	\end{equation} 
\end{proposition}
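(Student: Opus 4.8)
The plan is to recognize \eqref{DissipativityRepresentation} as nothing more than a repackaging of Definition~\ref{Definition}: the passage to the composite system \eqref{CompositeSystem} bundles $(u_1,u_2,d_1)$ into $w$ and $(y_1-y_2,d_2)$ into $z$, while the weights $Q,R$ of \eqref{QR} encode precisely the quadratic forms appearing in \eqref{GammaConformanceInequality}. Since $\gamma,\delta$ are the same in both statements, the equivalence will come down to matching the free parameters $\varepsilon,\eta,\mu$ on the two sides. The first ingredient I would record is the pointwise identity behind \eqref{AlternativeGammaConformanceInequality}: with $Q,R$ as in \eqref{QR} one has $|w|_Q^2 = \gamma|u_1-u_2|^2 + \delta(|u_1|^2+|u_2|^2) + \mu|d_1|^2$ and $|z|_R^2 = |y_1-y_2|^2 + \eta|d_2|^2$, so that after integration \eqref{GammaConformanceInequality} is literally equivalent to $\norm{z}_R^2 \le \norm{w}_Q^2 - \varepsilon\norm{w}^2$. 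I would also note here that $Q\succ 0$ and $R\succ 0$ (the $2\times 2$ block of $Q$ has eigenvalues $\delta$ and $2\gamma+\delta$), a fact used below.

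The second ingredient is the link between the composite dynamics and the originals. Driving \eqref{CompositeSystem} from $x(0)=0$ with inputs $d_2$ and $w=\operatorname{col}(u_1,u_2,d_1)$ yields, by the block structure \eqref{CompositeSystemMatrices}, $x(t)=\operatorname{col}(x_1(t),x_2(t))$ where $x_i$ is the state of $\bm{\Sigma}_i$ from $x_i(0)=0$ under $(u_i,d_i)$, and hence $z(t;0,0,d_2,w)=\operatorname{col}\!\big(y_1(t;0,0,u_1,d_1)-y_2(t;0,0,u_2,d_2),\,d_2(t)\big)$. Because $A$ is Hurwitz, a stable LTI map sends $\mathcal{L}_2$ inputs to $\mathcal{L}_2$ states (the fact already used in the proof of Proposition~\ref{ReflexivityTransitivity}, \cite[Section 4.1]{desoer2009feedback}), so $d_2,w\in\mathcal{L}_2$ gives $x\in\mathcal{L}_2$ and therefore $z\in\mathcal{L}_2$. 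Together with $Q,R\succ0$ this makes $\int_0^T|z(t)|_R^2\,\dt$ and $\int_0^T|w(t)|_Q^2\,\dt$ monotone and bounded in $T$, so $\lim_{T\to\infty}J_T(0,0,d_2,w)$ exists and equals $\norm{z}_R^2-\norm{w}_Q^2$.

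Given these two observations the proof closes in both directions. For necessity, take the $\varepsilon,\eta,\mu>0$ supplied by Definition~\ref{Definition} and form $Q,R$ via \eqref{QR}; for arbitrary $w\in\mathcal{L}_2$, split $w=\operatorname{col}(u_1,u_2,d_1)$, let $d_2$ be the disturbance furnished by the definition, and rewrite \eqref{GammaConformanceInequality} as $\lim_{T\to\infty}J_T(0,0,d_2,w)\le-\varepsilon\norm{w}^2$ using the identities above. For sufficiency, read $\mu,\eta>0$ off the assumed $Q,R$ and take $\varepsilon>0$ from \eqref{DissipativityRepresentation}; for given $u_1,u_2,d_1\in\mathcal{L}_2$ set $w=\operatorname{col}(u_1,u_2,d_1)$, obtain $d_2$ from \eqref{DissipativityRepresentation}, and unpack $\norm{z}_R^2-\norm{w}_Q^2\le-\varepsilon\norm{w}^2$ back into \eqref{GammaConformanceInequality}, which is exactly the condition of Definition~\ref{Definition} for the same $\gamma,\delta$ and these $\varepsilon,\eta,\mu$. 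The only step that is more than bookkeeping — and the one I would be careful with rather than a genuine obstacle — is the interchange of limit and integral, i.e. arguing $z\in\mathcal{L}_2$ so that $\lim_{T\to\infty}J_T(0,0,d_2,w)=\norm{z}_R^2-\norm{w}_Q^2$; everything else is matching the quadratic forms term by term.
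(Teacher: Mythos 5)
Your proposal is correct and follows essentially the same route as the paper's proof: both sides are obtained by matching the quadratic forms induced by $Q$ and $R$ in \eqref{QR} against the terms of \eqref{GammaConformanceInequality} and translating between the composite system \eqref{CompositeSystem} and the pair $\bm{\Sigma}_1,\bm{\Sigma}_2$. Your extra care in verifying $Q,R\succ 0$ and that $z\in\mathcal{L}_2$ (so that $\lim_{T\to\infty}J_T$ exists and equals $\norm{z}_R^2-\norm{w}_Q^2$) only makes explicit a step the paper treats as immediate.
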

\begin{proof}
	To show necessity, suppose  $\bm{\Sigma}_1 \preccurlyeq_{\gamma,\delta} \bm{\Sigma}_2$, \textit{i.e.,} there exist $\varepsilon,\mu,\eta>0$ such that for every $u_1,u_2\in\mathcal{L}_2$ and every $d_1\in\mathcal{L}_2$, there exists a $d_2\in\mathcal{L}_2$ such that \eqref{AlternativeGammaConformanceInequality}, with $Q$ and $R$ defined as in \eqref{QR}, holds. Writing \eqref{AlternativeGammaConformanceInequality} in terms of the composite \eqref{CompositeSystem} and the cost function \eqref{CostFunction}, one immediately concludes \eqref{DissipativityRepresentation}.
	
	To show sufficiency, suppose there exist matrices $Q$ and $R$ with a structure as in \eqref{QR} and a constant $\varepsilon>0$ such that \eqref{DissipativityRepresentation} holds, which can be compactly written as 
	\begin{equation}\label{AuxilaryDefinition}
		\norm{z}_R^2 \leq \norm{w}_Q^2 - \varepsilon\norm{w}^2.
	\end{equation}
	Given the partioning $z = \operatorname{col}(y_1-y_2,d_2)$ and $w = \operatorname{col}(u_1,u_2,d_1)$, \eqref{AuxilaryDefinition} can be written as \eqref{GammaConformanceInequality}.
\end{proof}

From now on, we will mostly deal with this alternative characterization rather than the original one. 
\begin{remark}\label{ConnectiontoHinftyProblem}
	Through a few simple algebraic manipulations, we can readily rewrite \eqref{DissipativityRepresentation} as
	\begin{equation}\label{SupremumRepresentation}
	\forall w\in\mathcal{L}_2 \ \text{s.t.} \ w\neq 0, \exists d_2 \in \mathcal{L}_2: \; \frac{\norm{z}_{R}}{\norm{w}_{Q}} < 1.
	\end{equation}
	It is worth noting that the constant $\varepsilon$ insures the strictness of the inequality in \eqref{SupremumRepresentation}.
	The notion of (\textgamma,\textdelta)-similarity seeks a $d_2$ subject to which the composite system \eqref{CompositeSystem} establishes the performance criterion \eqref{SupremumRepresentation}. As a consequence, we may regard $d_2$ as the ``control input'' to \eqref{CompositeSystem} and view $w$ as the disturbance input, such that \eqref{SupremumRepresentation} resembles a full information sub-optimal $H_\infty$ control problem (see, \textit{e.g.,} \cite[Chapter 6]{green2012linear} and \cite[Chapter 13]{trentelman2012control}). However, our problem differs from a standard $H_\infty$ control formulation. Namely, in contrast to the full information sub-optimal $H_\infty$ control problem, which seeks the existence of a state feedback $d_2$ such that the closed-loop system establishes \eqref{SupremumRepresentation}, here, we seek a $d_2$ establishing \eqref{SupremumRepresentation} without any a priori assumptions on its structure. In other words, we seek $d_2$ in an open-loop fashion. In addition, we do not require $d_2$ to establish any stability properties. Finally, our problem seeks the existence of the constants $\mu,\eta>0$, whereas in the full information sub-optimal $H_\infty$ control problem, all constants are given beforehand. Nonetheless, we will draw inspiration from full-information sub-optimal $H_\infty$ control theory in our characterization of (\textgamma,\textdelta)-similarity. 
\end{remark}

In the remainder of this section, we will derive an algebraic necessary and sufficient condition for (\textgamma,\textdelta)-similarity by taking the following steps. First, we consider a weak version of (\textgamma,\textdelta)-similarity where \eqref{GammaConformanceInequality} only holds over a finite time interval. We characterize this weak version, which will be addressed as \emph{finite-time (\textgamma,\textdelta)-similarity}, and study how the notion of (\textgamma,\textdelta)-similarity relates to it. This paves the way towards an elegant characterization of (\textgamma,\textdelta)-similarity. Eventually, we exploit  dissipativity theory to propose an easily verifiable criterion for the notion of (\textgamma,\textdelta)-similarity. Let us start with the formal definition of finite-time (\textgamma,\textdelta)-similarity. 
\begin{definition}\label{FiniteTimeDefinition}
	Given $0$-asymptotically stable systems $\bm{\Sigma}_1$ and $\bm{\Sigma}_2$, for $\gamma,\delta>0$ and a terminal time $T>0$, the system $\bm{\Sigma}_2$ is said to be finite-time (\textgamma,\textdelta)-similar to system $\bm{\Sigma}_1$ over the time interval $[0,T]$, denoted by $\bm{\Sigma}_1 \preccurlyeq_{\gamma,\delta}^{T} \bm{\Sigma}_2$, if a constant $\varepsilon>0$ and positive definite matrices $Q$ and $R$, as defined in \eqref{QR}, exist such that:
	\begin{equation}\label{GammaMuConformanceInequality}
	\begin{aligned}
	\forall w\in\mathcal{L}_2[0,T], & \ \exists d_2\in\mathcal{L}_2[0,T]:\\
	 &J_T(0,0,d_2,w) \leq -\varepsilon \norm{w}_{[0,T]}^2.
	\end{aligned}
	\end{equation}
\end{definition}
Before we characterize the notion of finite-time (\textgamma,\textdelta)-similarity, we derive the following lemma.
\begin{lemma}\label{GammaMuConformanceLemma}
	Consider the system \eqref{CompositeSystem} and let positive definite matrices $Q$ and $R$ and terminal time $T>0$ be given. Then, \eqref{GammaMuConformanceInequality} holds if and only if the following differential Riccati equation admits a positive semi-definite solution over the interval $[0,T]$:
	\begin{equation}\label{DRE}
	\begin{aligned}
	-\dot{P} =&\; A^TP+PA + C^TRC + PEQ^{-1}E^TP\\
	 &- PB(D^TRD)^{-1}B^TP; \;\;\; P(T) =0.
	\end{aligned}
	\end{equation}
\end{lemma}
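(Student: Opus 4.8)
The plan is to read \eqref{GammaMuConformanceInequality} as a finite-horizon full-information $H_\infty$-type problem --- equivalently, a zero-sum linear--quadratic differential game --- for the composite system \eqref{CompositeSystem} with the indefinite supply rate appearing in \eqref{CostFunction}, and to match it with the differential Riccati equation \eqref{DRE}. Throughout I would exploit the two structural facts $C^{T}RD=0$ and $D^{T}RD\succ0$: the first makes the completed square of the integrand free of an $x$--$d_2$ cross term (which is exactly why \eqref{DRE} has the form it does), and the second makes the cost strictly convex and coercive in the ``control'' $d_2$, since $J_T(\tau,\xi,d_2,0)=\int_{\tau}^{T}\big(|Cx|_R^2+|d_2|_{D^{T}RD}^2\big)\dt\ge \lambda_{\min}(D^{T}RD)\,\norm{d_2}_{[\tau,T]}^2$. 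I would prove the two implications separately.

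\emph{Sufficiency.} Let $P(\cdot)\succeq0$ solve \eqref{DRE} on $[0,T]$. A direct computation using \eqref{DRE} and $C^{T}RD=0$ shows that along any trajectory of \eqref{CompositeSystem},
\[
|z|_R^2-|w|_Q^2 \;=\; -\tfrac{d}{dt}\big(x^{T}Px\big)+|\tilde d_2|_{D^{T}RD}^2-|\tilde w|_Q^2,
\]
with $\tilde d_2:=d_2+(D^{T}RD)^{-1}B^{T}Px$ and $\tilde w:=w-Q^{-1}E^{T}Px$. Integrating over $[0,T]$ and using $x(0)=0$, $P(T)=0$ gives $J_T(0,0,d_2,w)=\norm{\tilde d_2}_{[0,T],D^{T}RD}^2-\norm{\tilde w}_{[0,T],Q}^2$. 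Choosing the feedback $d_2=-(D^{T}RD)^{-1}B^{T}Px$ --- which on the finite horizon defines a well-posed closed loop admitting a unique $\mathcal{L}_2[0,T]$ solution for every $w\in\mathcal{L}_2[0,T]$ --- makes $\tilde d_2\equiv0$, so $J_T(0,0,d_2,w)=-\norm{\tilde w}_{[0,T],Q}^2\le0$. To reach the strict bound demanded by Definition~\ref{FiniteTimeDefinition}, I would note that $w\mapsto\tilde w$ is a bounded linear bijection of $\mathcal{L}_2[0,T]$ with bounded inverse $\mathcal{F}$ (recover $x$ from $\dot x=(A-B(D^{T}RD)^{-1}B^{T}P+EQ^{-1}E^{T}P)x+E\tilde w$, $x(0)=0$, then $w=\tilde w+Q^{-1}E^{T}Px$), hence $\norm{\tilde w}_{[0,T],Q}^2\ge\lambda_{\min}(Q)\,\|\mathcal{F}\|^{-2}\,\norm{w}_{[0,T]}^2$ and \eqref{GammaMuConformanceInequality} holds with $\varepsilon:=\lambda_{\min}(Q)\|\mathcal{F}\|^{-2}>0$. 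Running the same identity from an arbitrary $\tau$ with $w\equiv0$ and the above feedback gives $\xi^{T}P(\tau)\xi=J_T(\tau,\xi,d_2,0)+\int_{\tau}^{T}|\tilde w|_Q^2\,\dt\ge0$, so that positive semidefiniteness of a solution of \eqref{DRE} is automatic wherever the solution exists.

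\emph{Necessity.} From \eqref{GammaMuConformanceInequality} one has, a fortiori, that for every $w\in\mathcal{L}_2[0,T]$ there is a $d_2\in\mathcal{L}_2[0,T]$ with $J_T(0,0,d_2,w)\le0$; by the strict convexity and coercivity of $J_T(0,0,\cdot,w)$ this is the same as $\min_{d_2}J_T(0,0,d_2,w)\le0$ for all $w$. I would show this forces the solution of \eqref{DRE} with $P(T)=0$ --- which exists on a maximal interval $(t^{\dagger},T]$, the right-hand side of \eqref{DRE} being quadratic, hence locally Lipschitz, in $P$ --- to satisfy $t^{\dagger}<0$. Arguing by contradiction, if $t^{\dagger}\ge0$ then, by the dynamic-programming identity $\xi^{T}P(s)\xi=\min_{d_2}\max_{w}J_T(s,\xi,d_2,w)$ (over $\mathcal{L}_2[s,T]$, proved by the same completion of squares), the cost-to-go blows up along the dominant eigenvector $\xi_s$ of $P(s)$ as $s\downarrow t^{\dagger}$; from this one constructs a disturbance $w\in\mathcal{L}_2[0,T]$ with $\min_{d_2}J_T(0,0,d_2,w)>0$, contradicting the previous display. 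Combined with the semidefiniteness observed above, this gives the claim. The main obstacle is precisely this last construction, where one must reconcile the open-loop nature of $d_2$ (it is sought without any a priori structure, not as a state feedback) and the ``for every $w$ there exists $d_2$'' quantifier with the closed-loop, game-value machinery, and must excite the offending state direction $\xi_s$ from the zero initial state; this is done along the lines of finite-horizon full-information $H_\infty$ control and zero-sum linear--quadratic game theory, see \cite[Ch.~6]{green2012linear} and \cite[Ch.~13]{trentelman2012control}, using the coercivity $D^{T}RD\succ0$.
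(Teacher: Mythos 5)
Your proposal is correct and follows essentially the same route as the paper: completion of squares with the feedback $d_2=-(D^TRD)^{-1}B^TPx$ for sufficiency, and for necessity the local existence of the DRE solution near $T$, the game-value identity $\xi^TP(s)\xi=\mathscr{J}_T(s,\xi)$, and a boundedness argument ruling out finite escape, with the hard step (handling nonzero states reached from the zero initial condition and the open-loop quantifier on $d_2$) deferred, as in the paper, to the finite-horizon full-information $H_\infty$ theory of \cite[Ch.~13]{trentelman2012control}. Your explicit bounded-inverse argument for extracting the strict $\varepsilon$, and the observation that positive semidefiniteness is automatic from the identity with $w\equiv 0$, merely make precise steps the paper leaves implicit.
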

\vspace{2mm}
\begin{proof}
	The proof follows the same procedure as in the finite-horizon full-information $H_\infty$ control problem, see, \textit{e.g.,} \cite[Theorem 13.1]{trentelman2012control}. 
	
	Given a positive semi-definite solution of \eqref{DRE}, it follows from completion of squares that for
	$d_2(t) = -(D^TRD)^{-1}B^TP(t)x(t)$, we obtain
	$J_T(0,0,d_2,w) \leq 0$. Taking an approach similar to that of \cite[Theorem 13.1]{trentelman2012control}, it is straightforward to finally conclude \eqref{GammaMuConformanceInequality}. 
	
	Given \eqref{GammaMuConformanceInequality}, we adopt an approach similar to that of \cite[Theorem 13.1]{trentelman2012control} to conclude the existence of a positive semi-definite solution to \eqref{DRE}. First, we utilize Theorem 10.7 of \cite{trentelman2012control} to show that \eqref{DRE} admits a solution over some interval $[T_1,T]$. We then define 
	$$\mathscr{J}_T(\tau,x_s): = \newsup_{w \in \mathcal{L}_2[\tau,T]}\inf_{d_2 \in \mathcal{L}_2[\tau,T]}J_T(\tau,x_s,d_2,w),$$
	and follow the proof of \cite[Lemma 13.2]{trentelman2012control} to conclude that for all $\tau \in [T_1,T]$,
	\begin{equation}\label{DRECostFunctionRelation}
	\mathscr{J}_T(\tau,x_s) = x_s^TP(\tau)x_s,
	\end{equation}
	which indicates how the solution $P$ is related to $\mathscr{J}_T$. Accordingly, we exploit this to show that $P$ does indeed exist over $[0,T]$. For this purpose, once again, it follows from \cite[Theorem 13.1]{trentelman2012control} that for all 
	$0\leq \tau_1\leq \tau_2\leq T$ and all $x_s$,
	\begin{subequations}\label{Properties}
		\begin{equation}\label{CostFunctionMonotonicity}
		\mathscr{J}_T(\tau_1,x_{s}) \geq  \mathscr{J}_T(\tau_2,x_{s}),
		\end{equation}
		\text{which implies that for all $\tau \in [0,T]$,}
		\begin{equation}\label{CostFunctionBound}
		\mathscr{J}_T(0, x_s) \geq  \mathscr{J}_T(\tau,x_s).
		\end{equation}
	\end{subequations} 
	Following the proof of \cite[Theorem 13.1]{trentelman2012control}, it is straightforward to conclude the boundedness of $\mathscr{J}_T(0, x_s)$. Finally, combining this with \eqref{DRECostFunctionRelation} and \eqref{Properties}, we deduce that $P(\tau)$ is uniformly bounded over the interval $[T_1,T]$, which, by Theorem 10.7 of \cite{trentelman2012control}, implies the existence of the solution $P$ over the whole interval $[0,T]$. Such a solution is positive semi-definite, directly following from \eqref{DRECostFunctionRelation} and \eqref{CostFunctionMonotonicity}.
\end{proof}

After choosing matrices $Q$ and $R$ as in \eqref{QR}, it is clear that Lemma \ref{GammaMuConformanceLemma} characterizes finite-time (\textgamma,\textdelta)-similarity in terms of the differential Riccati equation (DRE). As our second step, we study how the notion of (\textgamma,\textdelta)-similarity is related to its finite-time counterpart. The following result provides this relation. For this purpose, we first state the following lemma whose proof is given in the Appendix.  
\begin{lemma}\label{GammaConformanceFiniteTimeGammaConformance}
	Consider the system \eqref{CompositeSystem} and let positive definite matrices $Q$ and $R$ be given. Suppose that \eqref{DissipativityRepresentation} holds. Then,  \eqref{GammaMuConformanceInequality} holds for any $T>0$.
\end{lemma}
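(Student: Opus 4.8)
The plan is to obtain the finite-horizon inequality \eqref{GammaMuConformanceInequality} from the infinite-horizon one \eqref{DissipativityRepresentation} by a zero-extension (``padding'') argument. The fact that makes this work is that $A$ is Hurwitz, so that any pair of $\mathcal{L}_2$ inputs $(d_2,w)$ drives \eqref{CompositeSystem} from $x(0)=0$ to an output $z\in\mathcal{L}_2$; hence $\int_0^\infty |z(t)|_R^2\,\dt$ and $\int_0^\infty |w(t)|_Q^2\,\dt$ are both finite, the limit $\lim_{S\to\infty}J_S(0,0,d_2,w)$ exists and equals $\int_0^\infty\bigl(|z(t)|_R^2-|w(t)|_Q^2\bigr)\,\dt$, and, when $w$ is supported on $[0,T]$, the tail $\int_T^\infty|z(t)|_R^2\,\dt$ is a nonnegative quantity (because $R\succ0$) that we will be free to discard.

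Concretely, I would fix $T>0$ and an arbitrary $w\in\mathcal{L}_2[0,T]$, and let $\tilde w\in\mathcal{L}_2$ be its extension by zero to $[0,\infty)$, so that $\norm{\tilde w}=\norm{w}_{[0,T]}$. Applying \eqref{DissipativityRepresentation} to the input $\tilde w$ produces some $\hat d_2\in\mathcal{L}_2$ with $\lim_{S\to\infty}J_S(0,0,\hat d_2,\tilde w)\le -\varepsilon\norm{\tilde w}^2$. The natural candidate for the witness required by \eqref{GammaMuConformanceInequality} is the restriction $d_2:=\hat d_2|_{[0,T]}\in\mathcal{L}_2[0,T]$. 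By causality of \eqref{CompositeSystem}, the solution $z$ generated by $(\hat d_2,\tilde w)$ coincides on $[0,T]$ with the one generated by $(d_2,w)$, and on $[0,T]$ we also have $\tilde w=w$; splitting the cost integral at $t=T$ and using $\tilde w\equiv 0$ on $[T,\infty)$ therefore yields
\begin{equation*}
\begin{aligned}
J_T(0,0,d_2,w) &= \lim_{S\to\infty}J_S(0,0,\hat d_2,\tilde w) - \int_T^\infty |z(t)|_R^2\,\dt\\
&\le \lim_{S\to\infty}J_S(0,0,\hat d_2,\tilde w) \le -\varepsilon\norm{\tilde w}^2 = -\varepsilon\norm{w}_{[0,T]}^2 ,
\end{aligned}
\end{equation*}
which is \eqref{GammaMuConformanceInequality} for this $T$ with the same constant $\varepsilon$; since $w$ and $T>0$ were arbitrary, the claim follows.

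I do not expect a real obstacle here: this is the ``easy'' direction (the converse — recovering \eqref{DissipativityRepresentation} from the finite-horizon statements, handled via Lemma~\ref{GammaMuConformanceLemma} together with a limiting argument on the differential Riccati equation — is where the actual work lies). The only points needing a careful word are the two facts underpinning the displayed chain: (i) existence of $\lim_{S\to\infty}J_S(0,0,\hat d_2,\tilde w)$ and its representation as the convergent integral $\int_0^\infty\bigl(|z(t)|_R^2-|\tilde w(t)|_Q^2\bigr)\,\dt$, which follows from $z\in\mathcal{L}_2$ as noted above; and (ii) the trajectory-consistency remark that lets the integral be split at $T$ with the $[0,T]$ part equal to $J_T(0,0,d_2,w)$. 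Note also that only $R\succ0$ is actually used — the block structure \eqref{QR} of $Q$ and $R$ plays no role in this lemma.
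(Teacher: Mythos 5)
Your proof is correct, but it takes a genuinely different and more elementary route than the paper. You extend $w$ by zero, invoke \eqref{DissipativityRepresentation} for the extension, and then discard the nonnegative tail cost $\int_T^\infty |z(t)|_R^2\,\dt$ after splitting the integral at $T$ using causality; the only ingredients are $R\succ 0$, causality of \eqref{CompositeSystem}, and the fact that $A$ Hurwitz with $\mathcal{L}_2$ inputs gives $z\in\mathcal{L}_2$ so that all integrals converge. The paper instead proves the lemma indirectly: it shows that \eqref{DissipativityRepresentation} forces the value function $\mathscr{J}_T(0,x_s)=\sup_w\inf_{d_2}J_T$ to be uniformly bounded (via a chain of estimates using linearity of \eqref{CompositeSystem} and Cauchy--Schwarz), uses the identity $\mathscr{J}_T(\tau,x_s)=x_s^TP(\tau)x_s$ to conclude that the DRE \eqref{DRE} admits a positive semi-definite solution on all of $[0,T]$, and only then recovers \eqref{GammaMuConformanceInequality} through Lemma~\ref{GammaMuConformanceLemma}. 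Your argument is shorter and avoids the Riccati machinery entirely; what the paper's heavier route buys is the uniform (in $T$) bound on $\mathscr{J}_T(0,x_s)$, equivalently on the DRE solution, which is explicitly reused in the only-if part of the proof of Theorem~\ref{GammaConformanceLemma} to pass to the limit $T\to\infty$ and obtain the stabilizing ARE solution. So if your proof were adopted, that boundedness estimate would have to be reproved where Theorem~\ref{GammaConformanceLemma} needs it; as a proof of the lemma as stated, however, yours is complete, and your closing observations (existence of the limit of $J_S$, trajectory consistency on $[0,T]$, and the irrelevance of the block structure \eqref{QR}) are exactly the right points to flag.
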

With the choice of $Q$ and $R$ as in \eqref{QR}, the following corollary directly follows from Lemma~\ref{GammaConformanceFiniteTimeGammaConformance}.
\begin{corollary}\label{GammaConformanceFiniteTimeGammaConformanceCorollary}
	For $\gamma,\delta>0$, suppose that $\bm{\Sigma}_2$ is (\textgamma,\textdelta)-similar to $\bm{\Sigma}_1$. Then, $\bm{\Sigma}_2$ is finite-time (\textgamma,\textdelta)-similar to $\bm{\Sigma}_1$ over the time interval $[0,T]$, for any $T>0$.
\end{corollary}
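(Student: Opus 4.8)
The plan is to reduce the finite-horizon statement \eqref{GammaMuConformanceInequality} to the infinite-horizon one \eqref{DissipativityRepresentation} by a zero-extension of the disturbance. Fix $T>0$ and $w\in\mathcal{L}_2[0,T]$. First I would extend $w$ to a signal $\bar w\in\mathcal{L}_2$ by setting $\bar w(t)=w(t)$ for $t\in[0,T]$ and $\bar w(t)=0$ for $t>T$; then $\norm{\bar w}=\norm{w}_{[0,T]}$. Applying \eqref{DissipativityRepresentation} to $\bar w$ yields a signal $d_2\in\mathcal{L}_2$ with $\lim_{S\to\infty}J_S(0,0,d_2,\bar w)\leq -\varepsilon\norm{\bar w}^2=-\varepsilon\norm{w}_{[0,T]}^2$. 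This same $d_2$, which being in $\mathcal{L}_2$ a fortiori lies in $\mathcal{L}_2[0,T]$, will serve as the witness for the finite-horizon statement.

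The next step is to relate $J_T(0,0,d_2,w)$ to this infinite-horizon cost. Since \eqref{CompositeSystem} has Hurwitz $A$ and $d_2,\bar w\in\mathcal{L}_2$, the output $z(\cdot\,;0,0,d_2,\bar w)$ belongs to $\mathcal{L}_2$, so the limit above is a genuine limit and equals $\int_0^\infty\big(|z(t)|_R^2-|\bar w(t)|_Q^2\big)\,\dt$. I would split this integral at $T$. On $[0,T]$ the signals $\bar w$ and $w$ coincide, and by causality the state — hence $z$ — on $[0,T]$ depends only on the restrictions of $d_2$ and the disturbance to $[0,T]$; therefore $\int_0^T\big(|z(t)|_R^2-|\bar w(t)|_Q^2\big)\,\dt=J_T(0,0,d_2,w)$. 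On $(T,\infty)$ we have $\bar w\equiv 0$, so the integrand is $|z(t)|_R^2\geq 0$ because $R$ is positive definite. Consequently $J_T(0,0,d_2,w)\leq \lim_{S\to\infty}J_S(0,0,d_2,\bar w)\leq -\varepsilon\norm{w}_{[0,T]}^2$, which is exactly \eqref{GammaMuConformanceInequality}.

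There is no serious obstacle here; the points that require care are purely bookkeeping: (i) checking that the zero-extension lands in $\mathcal{L}_2$ with $\norm{\bar w}=\norm{w}_{[0,T]}$ and that the witness $d_2$ produced for $\bar w$ may be reused verbatim on $[0,T]$; (ii) the causality argument identifying the finite-horizon cost with the $[0,T]$-piece of the infinite-horizon cost; and (iii) observing that the discarded tail $\int_T^\infty|z(t)|_R^2\,\dt$ is nonnegative, so dropping it only strengthens the bound. If one is concerned whether the limit in \eqref{DissipativityRepresentation} should be read as a $\limsup$, the $\mathcal{L}_2$-membership of $z$ settles it, since both $\int_0^\infty|z(t)|_R^2\,\dt$ and $\int_0^\infty|\bar w(t)|_Q^2\,\dt$ converge.
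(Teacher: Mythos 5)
Your argument is correct, and it is genuinely different from — and considerably more elementary than — the route the paper takes. The paper derives Corollary~\ref{GammaConformanceFiniteTimeGammaConformanceCorollary} from Lemma~\ref{GammaConformanceFiniteTimeGammaConformance}, whose appendix proof does not establish the inequality \eqref{GammaMuConformanceInequality} directly: it instead shows that the differential Riccati equation \eqref{DRE} admits a positive semi-definite solution on all of $[0,T]$ (by bounding the sup-inf cost $\mathscr{J}_T(0,x_s)$ for \emph{arbitrary} initial states $x_s$, using a zero-extension of $w$ much like yours, and then invoking the identity \eqref{DRECostFunctionRelation} together with monotonicity and the continuation theorem for the DRE), and only then recovers \eqref{GammaMuConformanceInequality} via Lemma~\ref{GammaMuConformanceLemma}. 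Your proof bypasses all of that: zero-extend $w$, take the $d_2$ furnished by \eqref{DissipativityRepresentation}, use causality to identify the $[0,T]$-piece of the infinite-horizon cost with $J_T(0,0,d_2,w)$, and discard the nonnegative tail $\int_T^\infty |z(t)|_R^2\,\dt$ (nonnegative since $R\succ 0$ because $\eta>0$). All the bookkeeping points you flag check out — in particular $\mathcal{L}_2\subseteq\mathcal{L}_2[0,T]$ under the paper's conventions, and the Hurwitz property of $A$ guarantees $z\in\mathcal{L}_2$ so the limit in \eqref{DissipativityRepresentation} is a convergent integral. What your shortcut does \emph{not} deliver, and what the paper's heavier detour buys, is the uniform boundedness of the DRE solution $\bar P(\cdot,T,0)$ for arbitrary initial states; that object is the real payload of Lemma~\ref{GammaConformanceFiniteTimeGammaConformance}, since the only-if part of Theorem~\ref{GammaConformanceLemma} obtains the ARE solution as $\lim_{T\to\infty}\bar P(0,T,0)$. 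So as a proof of the corollary as stated, yours is cleaner and complete; as a replacement for the paper's lemma inside the overall architecture, it would leave the subsequent Riccati argument without its input.
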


As our final step towards characterizing the notion of (\textgamma,\textdelta)-similarity, we utilize Corollary~\ref{GammaConformanceFiniteTimeGammaConformanceCorollary} to extend the result of Lemma~\ref{GammaMuConformanceLemma} to the infinite-time case, which is indeed the notion of (\textgamma,\textdelta)-similarity. The proof of the following result can be found in the Appendix. 
\begin{theorem}\label{GammaConformanceLemma}
	For $\gamma,\delta>0$, $\bm{\Sigma}_2$ is (\textgamma,\textdelta)-similar to $\bm{\Sigma}_1$ if and only if there exist constants $\eta,\mu>0$ such that, for matrices $Q$ and $R$ as in \eqref{QR}, the algebraic Riccati equation 
	\begin{equation}\label{ARE}
	\begin{aligned}
	0 =&\; A^TP+PA+C^TRC\\
	&+PEQ^{-1}E^TP-PB(D^TRD)^{-1}B^TP
	\end{aligned}
	\end{equation}
	admits a positive semi-definite solution $P$ such that
	\begin{equation}\label{Stabilizing}
	\operatorname{spec}\Bigl(A - B(D^TRD)^{-1}B^TP + EQ^{-1}E^TP\Bigr) \subseteq\mathbb{C}_-.
	\end{equation}
\end{theorem}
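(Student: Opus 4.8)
The plan is to derive Theorem~\ref{GammaConformanceLemma} as the infinite-horizon counterpart of Lemma~\ref{GammaMuConformanceLemma}: having characterised finite-time $(\gamma,\delta)$-similarity through the differential Riccati equation~\eqref{DRE}, I would let $T\to\infty$ and track the limiting behaviour of its solutions, mirroring the full-information sub-optimal $H_\infty$ theorem \cite[Chapter~13]{trentelman2012control} (cf.\ Remark~\ref{ConnectiontoHinftyProblem}) while accommodating the three deviations noted there. Two facts about the matrices in~\eqref{CompositeSystemMatrices} streamline the computation: $C^TRD=0$ and $D^TRD=\eta I\succ0$, so the cross term in $|z|_R^2$ vanishes and the governing ARE is exactly~\eqref{ARE}.

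\emph{Necessity.} Assume $\bm{\Sigma}_1\preccurlyeq_{\gamma,\delta}\bm{\Sigma}_2$. By Proposition~\ref{AlternativeDefinition} there exist $\mu,\eta,\varepsilon>0$ — hence $Q,R$ as in~\eqref{QR} — with~\eqref{DissipativityRepresentation}, and by Corollary~\ref{GammaConformanceFiniteTimeGammaConformanceCorollary} the finite-time inequality~\eqref{GammaMuConformanceInequality} holds for every $T>0$; Lemma~\ref{GammaMuConformanceLemma} then yields a positive semi-definite solution $P_T(\cdot)$ of~\eqref{DRE} on each $[0,T]$. By time-invariance $\bar P(s):=P_T(T-s)$ is independent of $T$, and \eqref{DRECostFunctionRelation}, \eqref{CostFunctionMonotonicity} show that $s\mapsto\bar P(s)$ is non-decreasing. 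Bounding $\mathscr{J}_T(0,x_s)=x_s^T\bar P(T)x_s$ uniformly in $T$ by means of~\eqref{DissipativityRepresentation} and the $0$-asymptotic stability of~\eqref{CompositeSystem} (as in \cite[Chapter~13]{trentelman2012control}), the monotone bounded matrices $\bar P(s)$ converge to some positive semi-definite $P$, which solves~\eqref{ARE} as a limit of~\eqref{DRE}. The spectral condition~\eqref{Stabilizing} then follows as in \cite[Chapter~13]{trentelman2012control}: an unstable mode of $A-B(D^TRD)^{-1}B^TP+EQ^{-1}E^TP$ — the closed loop under $d_2^\star=-(D^TRD)^{-1}B^TPx$ and the worst-case $w^\star=Q^{-1}E^TPx$ — would, when injected into the infinite-horizon game, contradict the strict margin $-\varepsilon\norm{w}^2$.

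\emph{Sufficiency.} Conversely, assume $\mu,\eta>0$ and a stabilising positive semi-definite solution $P$ of~\eqref{ARE}. Setting $d_2^\star=-(D^TRD)^{-1}B^TPx$ and using $C^TRD=0$, completion of squares along~\eqref{CompositeSystem} with $x(0)=0$ yields, for every $T$,
\[
J_T(0,0,d_2^\star,w)=-\int_0^T\bigl|w-Q^{-1}E^TPx\bigr|_Q^2\,\dt-x(T)^TPx(T)\le 0 .
\]
In particular $\norm{z}_{[0,T],R}^2\le\norm{w}_Q^2$ uniformly in $T$; since $|z|_R^2\ge\eta|d_2^\star|^2$ this forces $d_2^\star\in\mathcal{L}_2$, and then $Bd_2^\star+Ew\in\mathcal{L}_2$ with $A$ Hurwitz gives $x,\dot x\in\mathcal{L}_2$, so $x(T)\to0$ and $\lim_{T\to\infty}J_T(0,0,d_2^\star,w)\le0$. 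Repeating the computation with the weight $Q-\varepsilon'I$ in place of $Q$ — which, for $\varepsilon'>0$ small, is again of the form~\eqref{QR} and for which~\eqref{ARE} still admits a stabilising positive semi-definite solution — produces, for the corresponding central feedback, the sharper bound $\lim_{T\to\infty}J_T\le-\varepsilon'\norm{w}^2$, which is exactly~\eqref{DissipativityRepresentation}; Proposition~\ref{AlternativeDefinition} then gives $\bm{\Sigma}_1\preccurlyeq_{\gamma,\delta}\bm{\Sigma}_2$.

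The step I expect to be the genuine obstacle is~\eqref{Stabilizing} in the necessity direction: the monotone limit of the DRE solutions is a priori only \emph{some} positive semi-definite solution of~\eqref{ARE}, and identifying it as the \emph{stabilising} one is precisely where the technical parameter $\varepsilon$ of Definition~\ref{Definition} is indispensable; the accompanying $\mathcal{L}_2$-bookkeeping and the uniform bound on $\bar P$ over the infinite horizon are more routine, handled along the lines of \cite[Chapter~13]{trentelman2012control}.
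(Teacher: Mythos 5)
Your skeleton is the paper's: finite-horizon DRE via Lemma~\ref{GammaMuConformanceLemma}, a monotone bounded limit of $\bar P(\cdot,T,0)$ giving a positive semi-definite solution of \eqref{ARE}, and completion of squares under the central feedback $d_2^\star=-(D^TRD)^{-1}B^TPx$ for sufficiency. Within sufficiency your identity for $J_T$ and the extraction of $d_2^\star\in\mathcal{L}_2$ from $\eta|d_2^\star|^2\le|z|_R^2$ are sound. The one place you diverge is how the strict margin $-\varepsilon\norm{w}^2$ is produced: you perturb $Q$ to $Q-\varepsilon'I$ and \emph{assert} that the perturbed ARE still admits a stabilising solution. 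That claim is true but not free --- it needs a robustness argument (e.g., that the associated Hamiltonian keeps no imaginary-axis eigenvalues and its stable subspace stays complementary to $\operatorname{im}\operatorname{col}(0,I)$ under small perturbations), and as stated it is a gap. The paper sidesteps this entirely: it sets $v=Q^{1/2}w-Q^{-1/2}E^TPx$, so that $\lim_{T\to\infty}J_T=-\norm{v}^2$, and observes that the map $v\mapsto w$ is realised by the system \eqref{GammaConformanceAux4}, whose state matrix is exactly the one appearing in \eqref{Stabilizing}; stability gives a finite $\mathcal{L}_2$-gain $\norm{w}\le\varepsilon^{-1/2}\norm{v}$ and hence \eqref{DissipativityRepresentation} with no perturbation of the Riccati data.

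The genuine gap is the one you yourself flag: establishing \eqref{Stabilizing} in the necessity direction. The contradiction you sketch --- ``inject an unstable mode of $A-B(D^TRD)^{-1}B^TP+EQ^{-1}E^TP$ into the infinite-horizon game'' --- does not work as stated, because the offending trajectory starts from a nonzero initial state and generates a disturbance $w=Q^{-1}E^TP\xi$ that is not in $\mathcal{L}_2$, whereas \eqref{DissipativityRepresentation} is posed at $x(0)=0$ for $w\in\mathcal{L}_2$ only. The paper's actual route is: (i) first prove that $A+BF$ is Hurwitz using only the ARE --- for a purely imaginary eigenvalue the rewritten ARE \eqref{AltARE} forces $(C+DF)v=0$ and $E^TPv=0$, which (since $D$ is injective and $D^TC=0$) makes $\lambda$ an unobservable eigenvalue of $(A,C)$, impossible because $A$ is Hurwitz; (ii) then take the finite-horizon worst case $w_T(t)=Q^{-1}E^T\bar P(t,T,0)x(t)$ along the closed loop from a nonzero $x_0$ and sandwich $J_T(0,x_0,Fx,w_T)$ between $x_0^T\bar P(0,T,0)x_0\ge 0$ (from \eqref{DRECostFunctionRelation}) and $-\varepsilon(\norm{w_T}_{[0,T]}-c_1)^2+c_2$; this uniform bound on $\norm{w_T}_{[0,T]}$ passes to the limit and shows that the disturbance generated by the autonomous system \eqref{Aux2} lies in $\mathcal{L}_2$, whence $\xi\to 0$ and \eqref{Stabilizing} follows. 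So the parameter $\varepsilon$ enters exactly where you predicted, but through this uniform bound on the finite-horizon worst-case disturbance at nonzero initial states, not through a direct unstable-mode contradiction; until that argument (or an equivalent) is supplied, the necessity half of the theorem is not proved.
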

\vspace*{1mm}

Theorem~\ref{GammaConformanceLemma} is crucial in the sense that it provides an algebraic characterization of the notion of (\textgamma,\textdelta)-similarity. Despite this, utilization of Theorem~\ref{GammaConformanceLemma} could be challenging, as finding $\eta,\mu>0$ such that \eqref{ARE} admits a solution requires solving the equation possibly for a large number of distinct values for $\eta$ and $\mu$. This is  neither desirable nor necessary, as we are only interested in the \emph{existence} of such $\eta$ and $\mu$.

Inspired by ideas from dissipativity theory, we derive conditions guaranteeing the existence of $\eta,\mu>0$ such that \eqref{ARE} admits a solution. With the aim of doing so, we propose an alternative characterization of (\textgamma,\textdelta)-similarity. Even though our problem does not a priori fit the full information $H_\infty$ control framework (see, Remark~\ref{ConnectiontoHinftyProblem}), Theorem~\ref{GammaConformanceLemma} leads to a Riccati equation that is very familiar to that framework. This, then, immediately guarantees that $d_2$ can also be obtained through state feedback, leading to the following lemma.  
\begin{lemma}\label{AlternativeGammaConformanceLemma}
	For $\gamma,\delta>0$, $\bm{\Sigma}_2$ is (\textgamma,\textdelta)-similar to $\bm{\Sigma}_1$ if and only if there exist constants $\varepsilon,\eta,\mu>0$ and matrix $F$ such that the composite system
	\begin{equation}\label{ClosedLoopCompositeSystemsLemma1}
	\begin{aligned}
	\dot{x} &= (A+BF)x + Ew, \; x(0)=0,\\
	z &= (C+DF)x,
	\end{aligned}
	\end{equation}
	with matrices \eqref{CompositeSystemMatrices} is  $0$-asymptotically stable and satisfies
	\begin{equation}\label{AlternativeDissipativityRepresentationLemma}
	\forall w\in\mathcal{L}_2: \; \norm{z}_R^2 - \norm{w}_Q^2 \leq -\varepsilon\norm{w}^2,
	\end{equation}
	for matrices $Q$ and $R$ given as in \eqref{QR}. Here, $z$ is the solution to \eqref{ClosedLoopCompositeSystemsLemma1}.
\end{lemma}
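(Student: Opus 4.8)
The plan is to prove the two implications separately, using the algebraic characterization already established in Theorem~\ref{GammaConformanceLemma} together with a completion-of-squares (storage-function) argument of bounded-real-lemma type, plus a small perturbation of the weight $Q$ to recover the strict inequality.

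\emph{Sufficiency} is the easy direction. Suppose $\varepsilon,\eta,\mu>0$ and a matrix $F$ are given so that \eqref{ClosedLoopCompositeSystemsLemma1} is $0$-asymptotically stable and \eqref{AlternativeDissipativityRepresentationLemma} holds. Fix $w\in\mathcal{L}_2$, let $x$ be the corresponding solution of \eqref{ClosedLoopCompositeSystemsLemma1} with $x(0)=0$, and note that since $A+BF$ is Hurwitz and $w\in\mathcal{L}_2$, a standard estimate gives $x\in\mathcal{L}_2$, hence $d_2:=Fx\in\mathcal{L}_2$. Substituting $d_2=Fx$ into the composite system \eqref{CompositeSystem} reproduces exactly \eqref{ClosedLoopCompositeSystemsLemma1}, so the resulting $z$ coincides with the one in \eqref{AlternativeDissipativityRepresentationLemma}; therefore $\norm{z}_R^2-\norm{w}_Q^2\le-\varepsilon\norm{w}^2$, which is precisely \eqref{AuxilaryDefinition}, equivalently \eqref{DissipativityRepresentation}. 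Proposition~\ref{AlternativeDefinition} then yields $\bm{\Sigma}_1\preccurlyeq_{\gamma,\delta}\bm{\Sigma}_2$.

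\emph{Necessity.} Suppose $\bm{\Sigma}_1\preccurlyeq_{\gamma,\delta}\bm{\Sigma}_2$. By Theorem~\ref{GammaConformanceLemma} there are $\eta_0,\mu_0>0$ and a positive semi-definite $P_0$ solving the ARE \eqref{ARE} for $Q_0:=Q(\gamma,\delta,\mu_0)$ and $R_0:=R(\eta_0)$, with the stabilizing property \eqref{Stabilizing}. The key observation is that $Q(\gamma,\delta-\epsilon,\mu_0-\epsilon)=Q_0-\epsilon I$ for every $\epsilon$, and that for $\epsilon>0$ small enough this matrix is still positive definite and of the form \eqref{QR}. I would then invoke continuity of the stabilizing solution of the Riccati equation under perturbation of its coefficients (the condition \eqref{Stabilizing} being equivalent to the associated Hamiltonian having no eigenvalues on the imaginary axis, an open condition, cf. \cite{trentelman2012control,green2012linear}) to conclude that for $\epsilon>0$ sufficiently small the ARE \eqref{ARE} with $Q_0$ replaced by $\hat Q:=Q_0-\epsilon I$ still admits a positive semi-definite solution $\hat P$ satisfying \eqref{Stabilizing}. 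Set $F:=-(D^TR_0D)^{-1}B^T\hat P$ and write $A_F:=A+BF$; for the composite matrices \eqref{CompositeSystemMatrices} one has $D^TR_0D=\eta_0 I$ (invertible) and $D^TR_0C=0$. Closed-loop stability follows by the standard eigenvalue argument: the ARE gives $A_F^T\hat P+\hat PA_F=-C^TR_0C-\hat PE\hat Q^{-1}E^T\hat P-\hat PB(D^TR_0D)^{-1}B^T\hat P$, so if $A_Fv=\lambda v$ with $v\neq0$ and $\operatorname{Re}\lambda\ge0$, pre- and post-multiplying by $v^*$ and $v$ forces $Cv=0$, $E^T\hat Pv=0$, $B^T\hat Pv=0$, hence $Fv=0$ and $Av=\lambda v$; but then $v$ is an eigenvector of the matrix in \eqref{Stabilizing} with eigenvalue $\lambda$, contradicting that it is Hurwitz. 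Thus $A_F$ is Hurwitz and \eqref{ClosedLoopCompositeSystemsLemma1} is $0$-asymptotically stable. For the dissipation inequality, use the storage function $V(x)=x^T\hat Px$; using $D^TR_0C=0$ and the ARE, completion of squares along \eqref{ClosedLoopCompositeSystemsLemma1} gives $\dot V+|z|_{R_0}^2-|w|_{\hat Q}^2=-|w-\hat Q^{-1}E^T\hat Px|_{\hat Q}^2\le 0$. Integrating over $[0,\infty)$ with $x(0)=0$ and $x(t)\to0$ (from Hurwitzness of $A_F$ and $w\in\mathcal{L}_2$) yields $\norm{z}_{R_0}^2\le\norm{w}_{\hat Q}^2=\norm{w}_{Q_0}^2-\epsilon\norm{w}^2$, i.e. \eqref{AlternativeDissipativityRepresentationLemma} with $\varepsilon:=\epsilon$, $\mu:=\mu_0$, $\eta:=\eta_0$, $Q:=Q_0$, $R:=R_0$.

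I expect the perturbation step to be the main obstacle: one must make precise that a solution of \eqref{ARE} with the stabilizing property \eqref{Stabilizing} persists under $Q_0\mapsto Q_0-\epsilon I$, and combine this with the identity $Q(\gamma,\delta-\epsilon,\mu_0-\epsilon)=Q_0-\epsilon I$ so that the resulting strict inequality is expressed against the \emph{original} weight $Q_0$ rather than the perturbed one (this is what produces the $-\varepsilon\norm{w}^2$ term; the equality-Riccati completion-of-squares alone only gives the non-strict bound $\norm{z}_{R_0}^2\le\norm{w}_{Q_0}^2$). The eigenvalue argument for closed-loop stability is the other non-routine ingredient, but it is standard in the full-information $H_\infty$ literature and transfers directly here because $D^TR_0C=0$ and $D^TR_0D$ is invertible for the composite system \eqref{CompositeSystem}.
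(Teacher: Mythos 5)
Your \emph{if} direction coincides with the paper's: substitute $d_2=Fx\in\mathcal{L}_2$ into \eqref{CompositeSystem}, recover \eqref{DissipativityRepresentation}, and invoke Proposition~\ref{AlternativeDefinition}. The \emph{only if} direction, however, departs from the paper and contains a genuine gap. You perturb $Q_0\mapsto\hat Q=Q_0-\epsilon I$ and appeal to continuity of the stabilizing Riccati solution, but two things are left unestablished, and your own argument relies on the second. First, persistence of the stabilizing solution requires not only that the Hamiltonian keep its imaginary axis free of eigenvalues (indeed an open condition) but also that its stable invariant subspace remain a graph subspace; this must be argued. Second, and more seriously, your eigenvalue argument for the Hurwitzness of $A+BF$ needs $\hat P\succeq 0$: for an eigenvalue with $\operatorname{Re}\{\lambda\}>0$, the identity $2\operatorname{Re}\{\lambda\}\,v^*\hat Pv=-|E^T\hat Pv|_{\hat Q^{-1}}^2-|(C+DF)v|_{R_0}^2$ only produces a contradiction if $v^*\hat Pv\geq 0$ forces $\hat Pv=0$. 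Continuity gives $\hat P\to P_0\succeq 0$ as $\epsilon\to 0$, but if $P_0$ is singular the perturbed solution may acquire small negative eigenvalues, so positive semi-definiteness does not follow from continuity alone; you would need an additional monotonicity argument (e.g., via the finite-horizon value functions, as in the paper's proof of Theorem~\ref{GammaConformanceLemma}) to close this.

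The perturbation detour is also unnecessary, and this is where your route diverges from the paper's. Your stated reason for it --- that completion of squares with the \emph{unperturbed} solution ``only gives the non-strict bound'' --- overlooks that the completion of squares yields the exact identity $\lim_{T\to\infty}J_T(0,0,d_2^*,w)=-\norm{v}^2$ with $v=Q^{1/2}w-Q^{-1/2}E^TPx$, and that the stabilizing property \eqref{Stabilizing} makes the system \eqref{GammaConformanceAux4} from $v$ to $w$ a stable LTI system with finite $\mathcal{L}_2$-gain, whence $\norm{w}^2\leq\varepsilon^{-1}\norm{v}^2$ for some $\varepsilon>0$ and the strict bound \eqref{TheoremAux5} follows with the \emph{original} weights $Q$ and $R$. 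This is exactly how the paper proceeds: it takes $F=-(D^TRD)^{-1}B^TP$ for the unperturbed stabilizing solution $P$ of \eqref{ARE} and simply reuses the \emph{if} part of Theorem~\ref{GammaConformanceLemma}, which already delivers both the Hurwitzness of $A+BF$ and the strict inequality \eqref{TheoremAux5}, equivalent to \eqref{AlternativeDissipativityRepresentationLemma}. Replacing your perturbation step by this observation repairs the proof without any perturbation of the Riccati data.
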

\begin{proof}
	\textit{If part}: Suppose there exist constants $\varepsilon,\eta,\mu>0$ and matrix $F$ such that \eqref{ClosedLoopCompositeSystemsLemma1} is $0$-asymptotically stable and \eqref{AlternativeDissipativityRepresentationLemma} is satisfied. As $A+BF$ is Hurwitz and $u,d_1\in\mathcal{L}_2$, one infers $Fx \in \mathcal{L}_2$. Defining $d_2 = Fx$, \eqref{ClosedLoopCompositeSystemsLemma1} takes the form \eqref{CompositeSystem}. Moreover, as $d_2 \in \mathcal{L}_2$, \eqref{AlternativeDissipativityRepresentationLemma} is equivalent to \eqref{DissipativityRepresentation}.
	As a result, by Definition~\ref{Definition}, system $\bm{\Sigma}_2$ is (\textgamma,\textdelta)-similar to system $\bm{\Sigma}_1$.
	
	\textit{Only if part}: Suppose $\bm{\Sigma}_2$ is (\textgamma,\textdelta)-similar to $\bm{\Sigma}_1$. It follows from Theorem~\ref{GammaConformanceLemma} that constants $\eta,\mu>0$ exist such that, for matrices $Q$ and $R$ as in \eqref{QR}, \eqref{ARE} admits a positive semi-definite solution satisfying \eqref{Stabilizing}. Taking $F = -(D^TRD)^{-1}B^TP$, it follows from the \textit{if} part of Theorem~\ref{GammaConformanceLemma} that subject to $d_2 = Fx$, \eqref{CompositeSystem} satisfies \eqref{TheoremAux5}. Nevertheless, subject to $d_2 = Fx$, the composite system \eqref{CompositeSystem} is of the form \eqref{ClosedLoopCompositeSystemsLemma1} with \eqref{TheoremAux5} equivalent to \eqref{AlternativeDissipativityRepresentationLemma}.
\end{proof}
\begin{remark}
	As pointed out in Remark~\ref{ConnectiontoHinftyProblem}, Definition~\ref{Definition} does not make any assumptions on the structure of $d_2$, a priori. However, Lemma~\ref{AlternativeGammaConformanceLemma} indicates that the existence of $d_2$, structured as a static state feedback, is implied by (\textgamma,\textdelta)-similarity. Therefore, closed-loop and open-loop choices of $d_2$ are equivalent. Under certain assumptions, a similar equivalence exists for the full-information sub-optimal $H_\infty$ control problem, see \cite[Theorem 3.1]{stoorvogel1992h}. It can be shown that our problem setup ensures satisfaction of these assumptions.
\end{remark}

Lemma~\ref{AlternativeGammaConformanceLemma} is consequential in the sense that it allows us to choose $d_2$ in a closed-loop fashion, \textit{i.e.,} as a static state feedback. The following result exploits this asset to give an interpretation of (\textgamma,\textdelta)-similarity in terms of dissipativity theory. 
\begin{lemma}\label{DissipativityLemma}
	For $\gamma,\delta>0$, $\bm{\Sigma}_2$ is (\textgamma,\textdelta)-similar to $\bm{\Sigma}_1$ if and only if there exist constants $\eta,\mu>0$ and a matrix $F$ such that the composite system 
	\begin{equation}\label{ClosedLoopCompositeSystemsLemma}
	\begin{aligned}
	\dot{x} &= (A+BF)x + Ew,\\
	z &= (C+DF)x,
	\end{aligned}
	\end{equation}
	is $0$-asymptotically stable and strictly dissipative with respect to the supply rate 
	\begin{equation}\label{SupplyFunction}
	s(w,z) = \begin{bmatrix}w\\z\end{bmatrix}^T\begin{bmatrix}
	Q&0\\0&-R
	\end{bmatrix}\begin{bmatrix}w\\z\end{bmatrix},
	\end{equation}
	\textit{i.e.,} there exists a function $V:\mathbb{R}^n\rightarrow[0,\infty)$ and an $\epsilon>0$ such that
	\begin{equation}\label{DissipationInequality}
	V(x(t_1)) \leq V(x(t_0)) + \int_{t_0}^{t_1}s(w(t),z(t))\dt -\epsilon\int_{t_0}^{t_1}|w(t)|^2\dt,
	\end{equation}
	for all $t_0\leq t_1$ and all signals $x$, $w$, and $z$ that satisfy \eqref{ClosedLoopCompositeSystemsLemma}.
\end{lemma}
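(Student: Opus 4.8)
The plan is to obtain this lemma from Lemma~\ref{AlternativeGammaConformanceLemma} by exchanging the input--output inequality \eqref{AlternativeDissipativityRepresentationLemma} (which concerns the zero initial state) for the state-space dissipation inequality \eqref{DissipationInequality} (which must hold along \emph{all} trajectories of \eqref{ClosedLoopCompositeSystemsLemma}). Concretely, I would fix a triple $(\eta,\mu,F)$ with $A+BF$ Hurwitz and show that \eqref{AlternativeDissipativityRepresentationLemma} holds for some $\varepsilon>0$ \emph{if and only if} \eqref{ClosedLoopCompositeSystemsLemma} is strictly dissipative with respect to the weighted supply rate \eqref{SupplyFunction}; combining this equivalence with both implications of Lemma~\ref{AlternativeGammaConformanceLemma} then yields the claim. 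This is exactly the strict bounded-real/KYP lemma specialized to the supply rate $s(w,z)=|w|_Q^2-|z|_R^2$, so each direction follows well-trodden lines.

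\emph{From dissipativity to \eqref{AlternativeDissipativityRepresentationLemma} (the ``if'' direction).}  Assume \eqref{ClosedLoopCompositeSystemsLemma} is $0$-asymptotically stable and that \eqref{DissipationInequality} holds for some $V\colon\mathbb{R}^n\to[0,\infty)$ and $\epsilon>0$. Fix $w\in\mathcal{L}_2$ and take $x(0)=0$; since $A+BF$ is Hurwitz we have $x,z\in\mathcal{L}_2$. Integrating \eqref{DissipationInequality} over $[0,T]$ and using $V(x(T))\ge 0$ gives $\int_0^T|z|_R^2\,\dt\le V(0)+\int_0^T|w|_Q^2\,\dt-\epsilon\int_0^T|w|^2\,\dt$; letting $T\to\infty$ (all three integrals converge as $w,z\in\mathcal{L}_2$) yields $\norm{z}_R^2\le V(0)+\norm{w}_Q^2-\epsilon\norm{w}^2$. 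Because $x(0)=0$ the map $w\mapsto z$ is linear, so replacing $w$ by $\lambda w$, dividing by $\lambda^2$ and sending $\lambda\to\infty$ removes the constant $V(0)$, giving \eqref{AlternativeDissipativityRepresentationLemma} with $\varepsilon=\epsilon$. The ``if'' part of Lemma~\ref{AlternativeGammaConformanceLemma} then delivers $\bm{\Sigma}_1\preccurlyeq_{\gamma,\delta}\bm{\Sigma}_2$.

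\emph{From $(\gamma,\delta)$-similarity to dissipativity (the ``only if'' direction).}  Assume $\bm{\Sigma}_2$ is $(\gamma,\delta)$-similar to $\bm{\Sigma}_1$. By the ``only if'' part of Lemma~\ref{AlternativeGammaConformanceLemma} there are $\varepsilon,\eta,\mu>0$ and $F$ with \eqref{ClosedLoopCompositeSystemsLemma} $0$-asymptotically stable and \eqref{AlternativeDissipativityRepresentationLemma} in force. I would take $V$ to be the available storage for the $\epsilon$-tightened supply with $\epsilon:=\varepsilon/2$, namely $V(x_0):=\sup\{\int_0^T(|z(t)|_R^2-|w(t)|_Q^2+\epsilon|w(t)|^2)\,\dt:\ T\ge 0,\ w\in\mathcal{L}_2[0,T]\}$, where $z$ solves \eqref{ClosedLoopCompositeSystemsLemma} from $x(0)=x_0$. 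Non-negativity of $V$ is clear ($T=0$), and the dissipation inequality \eqref{DissipationInequality} with margin $\epsilon$ follows from the standard time-invariance/dynamic-programming splitting of an admissible continuation from $x(t_0)$ into its restriction to $[t_0,t_1]$ followed by an optimal continuation from $x(t_1)$. (Alternatively, one may even take $V(x)=x^TPx$ with $P$ the stabilizing solution of \eqref{ARE} from Theorem~\ref{GammaConformanceLemma} and $F=-(D^TRD)^{-1}B^TP$; using $D^TRC=0$, which holds by the block structure \eqref{CompositeSystemMatrices}, \eqref{QR}, and completing squares gives $\dot V\le s(w,z)$, after which the strict margin must be re-injected --- which is precisely what the available-storage construction does for free.)

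\emph{The obstacle.}  The one genuinely technical point is finiteness of $V$ in the ``only if'' direction. I would split $z=z_{x_0}+z_w$ on $[0,T]$ into the zero-input response from $x_0$ and the zero-state response to $w$ (extended by zero past $T$). Hurwitzness of $A+BF$ gives $\norm{z_{x_0}}_R^2\le c_1|x_0|^2$, while \eqref{AlternativeDissipativityRepresentationLemma} applied to the extended input gives $\int_0^T|z_w|_R^2\,\dt\le\norm{w}_Q^2-\varepsilon\norm{w}^2$. Expanding $|z|_R^2$, bounding the cross term with Cauchy--Schwarz and Young's inequality, and using $\norm{w}_Q^2\le\sigma_{\max}(Q)\norm{w}^2$, one picks the Young parameter small enough that all $w$-dependent contributions are absorbed into $-(\varepsilon/4)\norm{w}^2\le 0$, leaving $V(x_0)\le c_2|x_0|^2<\infty$ uniformly in $T$ and $w$. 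This is the step where the strict margin $\varepsilon$ in \eqref{AlternativeDissipativityRepresentationLemma} is indispensable --- a merely non-strict $\mathcal{L}_2$-gain bound could not absorb the cross term --- and it is the reason for working with the $\epsilon$-tightened supply rate rather than with $s$ directly.
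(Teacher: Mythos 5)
Your proposal is correct, but it proves the lemma by a genuinely different route than the paper. The paper establishes the equivalence between the input--output bound \eqref{AlternativeDissipativityRepresentationLemma} and strict dissipativity entirely in one chain: it rewrites \eqref{AlternativeDissipativityRepresentationLemma} as an integral quadratic constraint, passes to the frequency domain via Parseval's theorem, applies the Kalman--Yakubovich--Popov lemma to obtain the LMI \eqref{DissipativityLMIAux} with some symmetric $K$, argues $K\succ 0$ from the Hurwitz property of $A+BF$ together with $R\succ 0$, and then reads off strict dissipativity with the \emph{quadratic} storage $V(x)=x^TKx$. You instead stay in the time domain: the ``if'' direction by integrating \eqref{DissipationInequality} from the zero state and removing $V(0)$ by the scaling argument (valid by linearity of $w\mapsto z$), and the ``only if'' direction by constructing the available storage for the $\epsilon$-tightened supply rate and proving its finiteness via the zero-input/zero-state decomposition with a Young-inequality absorption of the cross term --- a computation that closely parallels what the paper itself does in the appendix proof of Lemma~\ref{GammaConformanceFiniteTimeGammaConformance} (cf.\ \eqref{Lemma1Aux2}--\eqref{Lemma1Aux3}), and which correctly isolates where the strict margin $\varepsilon$ is indispensable. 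The trade-off: your argument is more elementary (no Fourier transform, no KYP) and proves the lemma exactly as stated, since the statement only asks for \emph{some} nonnegative $V$; the paper's route additionally delivers a positive-definite quadratic storage function, which is what Theorem~\ref{GammaConformanceTheorem} subsequently consumes to produce the LMI --- with your proof, that quadratic structure would still have to be recovered later via the standard dissipativity-to-LMI equivalence for linear systems.
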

\begin{proof}
	It follows from Lemma~\ref{AlternativeGammaConformanceLemma} that $\bm{\Sigma}_1\preccurlyeq_{\gamma,\delta}\bm{\Sigma}_2$ is equivalent to the existence of constants $\varepsilon,\eta,\mu>0$ and matrix $F$ such that \eqref{ClosedLoopCompositeSystemsLemma1} is $0$-asymptotically stable and satisfies \eqref{AlternativeDissipativityRepresentationLemma}. Nonetheless, it is straightforward to see that \eqref{AlternativeDissipativityRepresentationLemma} can be written as
	\begin{equation}\label{AlternativeDissipation}
	\forall w\in \mathcal{L}_2: \;\int_{0}^{\infty}\begin{bmatrix}
	w(t)\\z(t)
	\end{bmatrix}^T\begin{bmatrix}
	-Q+\varepsilon I&0\\0&R
	\end{bmatrix}\begin{bmatrix}
	w(t)\\z(t)
	\end{bmatrix}  \dt \leq 0.
	\end{equation}
	Let $G(s) = (C+DF)\bigl(sI-(A+BF)\bigr)^{-1}E$ be the transfer matrix of \eqref{ClosedLoopCompositeSystemsLemma1} and denote the Fourier transform of $w$ with $\hat{w}$. It follows immediately from Parseval's theorem (see, \textit{e.g.,} \cite[Section 4.2]{zhou1998essentials}) that $w \in \mathcal{L}_2$ if and only if $\hat{w} \in \mathcal{L}_2$. Moreover, Parseval's theorem implies that  \eqref{AlternativeDissipation} holds if and only if for all $\hat{w}\in\mathcal{L}_2$,
	\begin{equation*}
	\int_{0}^{\infty}\hat{w}^*(\im\omega)\begin{bmatrix}
	I\\G(\im\omega)
	\end{bmatrix}^*\begin{bmatrix}
	-Q+\varepsilon I&0\\0&R
	\end{bmatrix}\begin{bmatrix}
	I\\G(\im\omega)
	\end{bmatrix}\hat{w}(\im\omega) \ \mathrm{d}\omega\leq 0,
	\end{equation*}
	which, in turn, is equivalent to
	\begin{equation}\label{DissipativityCondition}
	\forall \omega: \; \begin{bmatrix}
	I\\G(\im\omega)
	\end{bmatrix}^*\begin{bmatrix}
	-Q&0\\0&R
	\end{bmatrix}\begin{bmatrix}
	I\\G(\im\omega)
	\end{bmatrix}\prec 0.
	\end{equation}
	Applying the Kalman-Yakubovich-Popov lemma  (see, \textit{e.g.,} \cite[Lemma 2.11]{scherer2000linear}), one concludes that \eqref{DissipativityCondition} is equivalent to the existence of $K = K^T$ such that
	\begin{equation}\label{DissipativityLMIAux}
	\begin{bmatrix}
	A_F^TK + KA_F + C_F^TRC_F&KE\\E^TK&-Q
	\end{bmatrix}\prec 0,
	\end{equation}
	where $A_F = A+BF$ and $C_F = C+ DF$.
	Since $R\succ 0$, it immediately follows from \eqref{DissipativityLMIAux} that 
	\begin{equation*}
	A_F^TK + KA_F\prec 0.
	\end{equation*}
	However, as $A_F$ is Hurwitz, this implies that $K \succ 0$. By a well-known result in dissipativity theory (see, \textit{e.g.,} \cite[Theorem 2.9]{scherer2000linear}), \eqref{DissipativityLMIAux} is equivalent to
	strict dissipativity of \eqref{ClosedLoopCompositeSystemsLemma} with respect to the supply rate \eqref{SupplyFunction} and the storage function $V(x) = x^TKx$.
\end{proof}

Given the dissipativity interpretation of the notion of (\textgamma,\textdelta)-similarity, we are now ready to derive the algebraic, easily verifiable, necessary and sufficient condition we desired.  
\begin{theorem}\label{GammaConformanceTheorem}
	For $\gamma,\delta>0$, $\bm{\Sigma}_2$ is (\textgamma,\textdelta)-similar to $\bm{\Sigma}_1$ if and only if there exist a positive definite matrix $X$, a matrix $\Pi$, and positive scalars $\tilde{\eta}$ and $\tilde{\mu}$ such that
	\begin{subequations}
		\begin{equation}\label{FinalLMI}
		\begin{aligned}
		&\begin{bmatrix}
		AX + B\Pi + (AX + B\Pi)^T &E&(CX+D\Pi)^T\\
		E^T&-\tilde{Q}(\gamma,\delta,\tilde{\mu})&0\\
		CX+D\Pi&0&-\tilde{R}(\tilde{\eta})
		\end{bmatrix}\\
		&\prec 0,
		\end{aligned}
		\end{equation}
		\text{where $A$, $B$, $E$, $C$, and $D$ are given as in \eqref{CompositeSystemMatrices} and}
		\begin{equation}\label{TildeQTildeR}
		\begin{aligned}
		\tilde{Q}(\gamma,\delta,\tilde{\mu}) &= \begin{bmatrix}
		(\gamma+\delta)I&-\gamma I&0\\-\gamma I&(\gamma+\delta)I&0\\0&0&\tilde{\mu} I
		\end{bmatrix},\\
		 \tilde{R}(\tilde{\eta}) &= \begin{bmatrix}
		I&0\\0&\tilde{\eta}I
		\end{bmatrix}.
		\end{aligned}
		\end{equation}
	\end{subequations}
\end{theorem}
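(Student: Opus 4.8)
The plan is to derive \eqref{FinalLMI} from the dissipativity characterization already established, namely from the matrix inequality \eqref{DissipativityLMIAux} that appears inside the proof of Lemma~\ref{DissipativityLemma}, by converting the joint bilinear dependence on the feedback gain $F$ and the storage matrix $K$ into an affine one through a congruence transformation and a change of variables.

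First I would record that, by Lemma~\ref{DissipativityLemma} and the computation inside its proof, $\bm{\Sigma}_1\preccurlyeq_{\gamma,\delta}\bm{\Sigma}_2$ holds if and only if there exist $\eta,\mu>0$, a matrix $F$, and a symmetric matrix $K$ satisfying \eqref{DissipativityLMIAux}; moreover any such $K$ is automatically positive definite and $A+BF$ is Hurwitz, so the $0$-asymptotic stability requirement is subsumed by the inequality itself. Since $R=\operatorname{diag}(I,\eta I)\succ 0$, a Schur complement applied to the term $C_F^TRC_F$ in the $(1,1)$ block (with $A_F=A+BF$ and $C_F=C+DF$) shows that \eqref{DissipativityLMIAux} is equivalent to
\[
\begin{bmatrix}
A_F^TK+KA_F & KE & C_F^T\\
E^TK & -Q & 0\\
C_F & 0 & -R^{-1}
\end{bmatrix}\prec 0.
\]

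Next I would apply the congruence transformation by $\operatorname{diag}(K^{-1},I,I)$ — nonsingular because $K\succ 0$, hence preserving negative definiteness — and set $X:=K^{-1}\succ 0$ and $\Pi:=FK^{-1}=FX$. A direct computation gives $K^{-1}(A_F^TK+KA_F)K^{-1}=(AX+B\Pi)+(AX+B\Pi)^T$, $K^{-1}C_F^T=(CX+D\Pi)^T$, $C_FK^{-1}=CX+D\Pi$, while $KE$ turns into $E$, so the inequality becomes
\[
\begin{bmatrix}
(AX+B\Pi)+(AX+B\Pi)^T & E & (CX+D\Pi)^T\\
E^T & -Q & 0\\
CX+D\Pi & 0 & -R^{-1}
\end{bmatrix}\prec 0.
\]
Finally I would match this with \eqref{FinalLMI}: comparing with \eqref{QR} and \eqref{TildeQTildeR}, the matrix $Q$ equals $\tilde{Q}(\gamma,\delta,\tilde{\mu})$ with $\tilde{\mu}=\mu$ (note that $Q$ is never inverted), while $R^{-1}=\operatorname{diag}(I,\eta^{-1}I)$ equals $\tilde{R}(\tilde{\eta})$ with $\tilde{\eta}=1/\eta$. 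Because $\mu\mapsto\tilde{\mu}$ and $\eta\mapsto\tilde{\eta}$ are bijections of $(0,\infty)$ onto itself, existence of $(\eta,\mu,F,K)$ as above is equivalent to existence of $X\succ 0$, a matrix $\Pi$, and scalars $\tilde{\mu},\tilde{\eta}>0$ satisfying \eqref{FinalLMI}. The converse direction simply reverses these steps: from \eqref{FinalLMI} set $K=X^{-1}\succ 0$, $F=\Pi X^{-1}$, $\mu=\tilde{\mu}$, $\eta=1/\tilde{\eta}$, undo the congruence and the Schur complement to recover \eqref{DissipativityLMIAux}, and observe that the $(1,1)$ block of \eqref{FinalLMI} together with $X\succ 0$ already forces $A+BF$ to be Hurwitz, so no separate stability argument is needed.

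The main obstacle here is essentially bookkeeping rather than a conceptual difficulty: the one point that must be handled with care is that the Schur-complement step inverts $R$ but leaves $Q$ untouched, which is exactly why the statement carries $\tilde{R}(\tilde{\eta})$ (with $\tilde{\eta}$ playing the role of $1/\eta$) yet keeps $\tilde{Q}(\gamma,\delta,\tilde{\mu})$ of the same form as $Q$ with $\tilde{\mu}=\mu$; and one must keep all inequalities strict along the congruence and Schur-complement manipulations, so that the technical slack parameter $\varepsilon$ from the earlier characterizations never has to be tracked explicitly.
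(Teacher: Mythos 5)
Your proposal is correct and follows essentially the same route as the paper: both pass through Lemma~\ref{DissipativityLemma} and the KYP inequality \eqref{DissipativityLMIAux}, then combine a Schur complement with the congruence by $\operatorname{diag}(X^{-1},I,I)$ and the change of variables $X=K^{-1}$, $\Pi=FX$, identifying $\tilde{Q}=Q$ and $\tilde{R}=R^{-1}$ (so $\tilde{\mu}=\mu$, $\tilde{\eta}=1/\eta$); the only difference is the order of the Schur and congruence steps, which is immaterial. One small caveat: your parenthetical claim that \emph{any} symmetric $K$ satisfying \eqref{DissipativityLMIAux} is automatically positive definite holds only once $A+BF$ is known to be Hurwitz (as in the lemma) or once $X\succ0$ is assumed (as in \eqref{FinalLMI}), but both directions of your argument supply exactly the needed hypothesis, so the proof stands.
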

\begin{proof}
	\textit{If part}: Suppose \eqref{FinalLMI} admits a solution. The congruence transformation
	\begin{equation}\label{CongruenceTransformation}
	\begin{bmatrix}
	X^{-1}&0&0\\0&I&0\\0&0&I
	\end{bmatrix},
	\end{equation}
	transforms \eqref{FinalLMI} into
	\begin{equation}\label{DissipativityLMIAux4}
	\begin{bmatrix}
	A_F^TX^{-1} + X^{-1}A_F&X^{-1}E&C_F^T\\E^TX^{-1}&-\tilde{Q}(\gamma,\delta,\tilde{\mu})&0\\
	C_F&0&-\tilde{R}(\tilde{\eta})
	\end{bmatrix}\prec 0,
	\end{equation}
	where $F:= \Pi X^{-1}$, $A_F = A+BF$, and $C_F = C+DF$. This suggests that $A_F$ is Hurwitz. Taking $Q = \tilde{Q}$ and $R = \tilde{R}^{-1}$, we take the Schur complement of \eqref{DissipativityLMIAux4} to obtain
	\begin{equation}\label{DissipativityLMI}
	\begin{bmatrix}
	A_F^TX^{-1} + X^{-1}A_F + C_F^TRC_F&X^{-1}E\\E^TX^{-1}&-Q
	\end{bmatrix}\prec 0.
	\end{equation}
	implying that \eqref{ClosedLoopCompositeSystemsLemma} is strictly dissipative with respect to the supply rate \eqref{SupplyFunction} and the storage function $V(x) = x^TX^{-1}x$ \cite[Theorem 2.9]{scherer2000linear}. Finally, it follows from Lemma~\ref{DissipativityLemma} that $\bm{\Sigma}_2$ is (\textgamma,\textdelta)-similar to $\bm{\Sigma}_1$.
	
	\textit{Only if part}: Suppose $\bm{\Sigma}_2$ is (\textgamma,\textdelta)-similar to $\bm{\Sigma}_1$. It follows from Lemma~\ref{DissipativityLemma} that \eqref{ClosedLoopCompositeSystemsLemma} is strictly dissipative with respect to the supply rate \eqref{SupplyFunction}, which implies the existence of a positive definite matrix $K$ such that \eqref{DissipativityLMIAux} holds.
	Taking $X = K^{-1}$, this is equivalent to \eqref{DissipativityLMI}. However, similar reasoning as in the \emph{if} part of the theorem shows that \eqref{DissipativityLMI} is equivalent to \eqref{FinalLMI}. 
\end{proof}

Theorem~\ref{GammaConformanceTheorem} characterizes the notion of (\textgamma,\textdelta)-similarity as a linear matrix inequality (LMI) feasibility problem, for which numerous computational techniques are available. We, thus, have equipped the notion with an easily verifiable necessary and sufficient condition.

An LMI feasibility problem characterizing the notion of (\textgamma,\textdelta)-similarity, \eqref{FinalLMI} involves an explicit computation of the parameters $\mu$ and $\eta$. Nevertheless, (\textgamma,\textdelta)-similarity merely asks for the \emph{existence} of such $\mu$ and $\eta$ rather than their values. This motivates us to modify the result of Theorem~\ref{GammaConformanceTheorem} such that it does not involve the computation of $\mu$ and $\eta$. The following result utilizes Finsler's lemma (see, \textit{e.g.,} \cite[Lemma 2]{oliveira2001stability}) to achieve this objective. 
\begin{theorem}\label{IndependentGammaConformanceTheorem}
	For $\gamma,\delta>0$, $\bm{\Sigma}_2$ is (\textgamma,\textdelta)-similar to $\bm{\Sigma}_1$ if and only if there exist a positive definite matrix $X$ and a matrix $\Pi$ such that
	\begin{equation}\label{IndependentLMI}
	\begin{bmatrix}
	XA^T+AX+\Pi^TB^T + B\Pi& \bar{E}&X\bar{C}^T\\
	\bar{E}^T&-\bar{Q}(\gamma,\delta)&0\\
	\bar{C}X&0&-I
	\end{bmatrix}\prec 0,
	\end{equation} 
	where $A$ and $B$ are given as in \eqref{CompositeSystemMatrices} and
	\begin{equation}\label{BarMatrices}
	\begin{aligned}
	\bar{E} &= \begin{bmatrix}
	B_1&0\\0&B_2
	\end{bmatrix},
	\bar{C} = \begin{bmatrix}
	C_1&-C_2
	\end{bmatrix},\\
	\bar{Q}(\gamma,\delta)&=\begin{bmatrix}
	(\gamma+\delta)I&-\gamma I\\-\gamma I&(\gamma+\delta)I
	\end{bmatrix}.
	\end{aligned}
	\end{equation}
\end{theorem}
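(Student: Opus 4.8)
The plan is to deduce Theorem~\ref{IndependentGammaConformanceTheorem} from Theorem~\ref{GammaConformanceTheorem}. By that theorem, $\bm{\Sigma}_1\preccurlyeq_{\gamma,\delta}\bm{\Sigma}_2$ is equivalent to the existence of $X\succ0$, $\Pi$, and scalars $\tilde\mu,\tilde\eta>0$ solving \eqref{FinalLMI}, so it suffices to show that \eqref{FinalLMI} is feasible for \emph{some} $\tilde\mu,\tilde\eta>0$ precisely when \eqref{IndependentLMI} is feasible in $X\succ0$, $\Pi$ alone. The structural facts to exploit are: with $\tilde Q,\tilde R$ as in \eqref{TildeQTildeR} the scalars $\tilde\mu$ and $\tilde\eta$ appear in \eqref{FinalLMI} only through the diagonal blocks $-\tilde\mu I$ and $-\tilde\eta I$; and the composite matrices \eqref{CompositeSystemMatrices} split as $E=[\,\bar E\ \ \operatorname{col}(E_1,0)\,]$, $C=\operatorname{col}(\bar C,0)$, $D=\operatorname{col}(0,I)$, whence $CX+D\Pi=\operatorname{col}(\bar C X,\Pi)$. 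Permuting rows/columns to the block order (state, $(u_1,u_2)$, $d_1$, $y_1-y_2$, $d_2$), \eqref{FinalLMI} becomes a $5\times5$ block inequality in which the $d_1$-block carries $-\tilde\mu I$ and the $d_2$-block carries $-\tilde\eta I$, and deleting both of these block rows and columns leaves exactly the $3\times3$ block matrix of \eqref{IndependentLMI} (recalling $AX+B\Pi+(AX+B\Pi)^T=XA^T+AX+\Pi^TB^T+B\Pi$ and that deleting the $d_1$-block collapses $\tilde Q$ to $\bar Q(\gamma,\delta)$ and $E$ to $\bar E$, while deleting the $d_2$-block collapses $\tilde R$ to $I$ and $\operatorname{col}(\bar C X,\Pi)$ to $\bar C X$).

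With this reformulation I would eliminate $\tilde\mu$ and $\tilde\eta$ one at a time via Finsler's lemma \cite[Lemma~2]{oliveira2001stability}. Fix $X\succ0$, $\Pi$, $\tilde\eta>0$ and write the permuted \eqref{FinalLMI} as $\mathcal Q_0-\tilde\mu\,\mathcal R_1^{T}\mathcal R_1\prec0$, where $\mathcal R_1$ is the coordinate-selection matrix onto the $d_1$-block and $\mathcal Q_0$ coincides with that matrix except that its $d_1$-diagonal block is set to zero. Finsler's lemma states that there is a real $\tilde\mu$ with $\mathcal Q_0-\tilde\mu\mathcal R_1^{T}\mathcal R_1\prec0$ iff the compression of $\mathcal Q_0$ to $\ker\mathcal R_1$---i.e., the principal submatrix obtained by deleting the $d_1$-block---is negative definite. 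Since a negative definite matrix has negative definite diagonal blocks, the block $-\tilde\mu I$ forces $\tilde\mu>0$, so ``$\exists\tilde\mu\in\mathbb R$'' is the same as ``$\exists\tilde\mu>0$'' here. Then I would fix $X\succ0$, $\Pi$ and apply the identical argument to the $d_2$-block to remove $\tilde\eta$. Chaining the two equivalences (and existentially quantifying over $X\succ0$, $\Pi$ throughout) gives, in both directions at once, that \eqref{FinalLMI} is feasible for some $\tilde\mu,\tilde\eta>0$ iff \eqref{IndependentLMI} is feasible, which combined with Theorem~\ref{GammaConformanceTheorem} is the assertion.

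I expect the genuine work here to be bookkeeping rather than any real difficulty: one must carry out the block permutation of \eqref{FinalLMI} carefully and check that deleting the two indicated block rows/columns reproduces \eqref{IndependentLMI} verbatim (the right $\bar E$, $\bar C$, $\bar Q(\gamma,\delta)$, and the identity in the last diagonal slot), verify the hypotheses of Finsler's lemma (the selection matrices have full row rank and strictly fewer rows than columns), and justify the ``$\tilde\mu\in\mathbb R$ may be taken positive'' step (and likewise for $\tilde\eta$) from the sign of the corresponding diagonal block. If one prefers to avoid invoking Finsler twice, the same conclusion follows by Schur complements: eliminating $-\tilde\mu I$ and $-\tilde\eta I$ turns \eqref{FinalLMI} into \eqref{IndependentLMI} with its $(1,1)$-block perturbed by $\tfrac1{\tilde\mu}\operatorname{col}(E_1,0)\operatorname{col}(E_1,0)^{T}+\tfrac1{\tilde\eta}\Pi^{T}\Pi\succeq0$; feasibility of \eqref{IndependentLMI} forces the perturbed inequality for all sufficiently large $\tilde\mu,\tilde\eta$ because negative definiteness is an open condition, and, conversely, discarding a positive semidefinite term from a negative definite matrix keeps it negative definite.
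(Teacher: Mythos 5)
Your proposal is correct and follows essentially the same route as the paper: reduce to Theorem~\ref{GammaConformanceTheorem} and then eliminate $\tilde{\mu}$ and $\tilde{\eta}$ by two applications of Finsler's lemma, exploiting that these scalars enter \eqref{FinalLMI} only through the diagonal blocks $-\tilde{\mu}I$ and $-\tilde{\eta}I$ so that deleting the corresponding block rows and columns yields \eqref{IndependentLMI}. The paper states this in one line, whereas you supply the bookkeeping (the block permutation, the sign argument showing $\tilde{\mu},\tilde{\eta}$ may be taken positive) and a valid Schur-complement alternative; all of these details check out.
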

\vspace*{2mm}
\begin{proof}
	As a result of Theorem~\ref{GammaConformanceTheorem}, $\bm{\Sigma}_1\preccurlyeq_{\gamma,\delta}\bm{\Sigma}_2$ is equivalent to the existence of $X\succ0$, $\Pi$, and $\tilde{\mu},\tilde{\eta}>0$ such that \eqref{FinalLMI} holds, where matrices $\tilde{Q}(\gamma,\delta,\tilde{\mu})$ and $\tilde{R}(\tilde{\eta})$ can be written as
	\begin{equation*}
	\begin{aligned}
	\tilde{Q}(\gamma,\delta,\tilde{\mu}) &= \begin{bmatrix}
	\bar{Q}(\gamma,\delta)&0\\0&0\end{bmatrix} + \tilde{\mu}\begin{bmatrix}
	0\\I
	\end{bmatrix}\begin{bmatrix}
	0&I
	\end{bmatrix},\\
	\tilde{R}(\tilde{\eta}) &= \begin{bmatrix}
	I&0\\0&0
	\end{bmatrix} + \tilde{\eta}\begin{bmatrix}
	0\\I
	\end{bmatrix}\begin{bmatrix}
	0&I
	\end{bmatrix}.
	\end{aligned}
	\end{equation*}
	However, applying Finsler's lemma twice, one concludes that \eqref{FinalLMI} is equivalent to \eqref{IndependentLMI} \cite[Lemma 2]{oliveira2001stability}. 
\end{proof}
\begin{remark}
	As for \eqref{IndependentLMI}, let us take $F = \Pi X^{-1}$. As a consequence, through a similar procedure as in obtaining \eqref{DissipativityLMI}, \eqref{IndependentLMI} is seen to be equivalent to
	\begin{equation*}
	\begin{bmatrix}
	(A+BF)^TX^{-1}+X^{-1}(A+BF)+\bar{C}^T\bar{C}&X^{-1}\bar{E}\\
	\bar{E}^TX^{-1}&-\bar{Q}(\gamma,\delta)
	\end{bmatrix}\prec 0,
	\end{equation*} 
	indicating that the composite system
	\begin{equation}\label{NominalSystem}
	\begin{aligned}
	\dot{x} &= (A+BF)x + \bar{E}\bar{w},\\
	\bar{z} &= \bar{C}x,
	\end{aligned}
	\end{equation} 
	where $\bar{w} = \operatorname{col}(u_1,u_2)$, is strictly dissipative with the supply function
	\begin{equation}\label{NominalSupplyFunction}
	\bar{s}(\bar{w},\bar{z}) = \begin{bmatrix}
	\bar{w}\\\bar{z}
	\end{bmatrix}^T\begin{bmatrix}
	\bar{Q}(\gamma,\delta)&0\\0&-I
	\end{bmatrix}\begin{bmatrix}\bar{w}\\\bar{z}
	\end{bmatrix},
	\end{equation}
	and the storage function $V(x) = x^TX^{-1}x$ \cite[Theorem 2.9]{scherer2000linear}. To summarize, it follows from Theorem~\ref{IndependentGammaConformanceTheorem} that the notion of (\textgamma,\textdelta)-similarity is fully characterized in terms of the strict dissipativity of the composite system \eqref{NominalSystem} with respect to the supply rate \eqref{NominalSupplyFunction}.
\end{remark}
\begin{remark}\label{NonzeroInitialCondition}
	The notion of (\textgamma,\textdelta)-similarity merely involves zero initial conditions, as it measures the similarity of forced responses. One may exploit Lemma~\ref{DissipativityLemma} to address the case of non-zero initial conditions. It follows from Lemma~\ref{DissipativityLemma} that  $\bm{\Sigma}_1\preccurlyeq_{\gamma,\delta}\bm{\Sigma}_2$ is equivalent to the existence of constants $\varepsilon,\eta,\mu>0$, a matrix $F$, and a positive definite matrix $K$ such that \eqref{ClosedLoopCompositeSystemsLemma} is $0$-asymptotically stable and satisfies 
	\begin{equation*}
	\begin{aligned}
	|x(t)|_K^2 - |x(0)|_K^2 \ \leq \int_{0}^{t}s(w(\tau),z(\tau))\mathrm{d}\tau-\varepsilon\int_{0}^{t}|w(t)|^2 \ \mathrm{d}\tau,
	\end{aligned}
	\end{equation*}
	where $s$ is given as in \eqref{SupplyFunction}. Because $A+BF$ is Hurwitz and $w\in\mathcal{L}_2$, as $t\rightarrow\infty$, the integrals remain well-defined and the state $x$ approaches the origin, which implies
	\begin{equation*}
	\forall w\in\mathcal{L}_2:\;\norm{z}_R^2 \leq \norm{w}_Q^2 - \varepsilon\norm{w}^2 + |x(0)|_K^2.
	\end{equation*}
	This, in turn, indicates that there exist constants $\varepsilon,\eta,\mu>0$ and a positive definite matrix $K$ such that for every input $u_1,u_2\in\mathcal{L}_2$ and every disturbance $d_1\in\mathcal{L}_2$, there exists a disturbance $d_2\in\mathcal{L}_2$ such that for any $x_{1,0}\in\mathbb{R}^{n_1}$ and $x_{2,0}\in\mathbb{R}^{n_2}$,
	\begin{equation}\label{NonZeroInitialConditionGammaConformanceInequality}
	\begin{aligned}
	\norm{y_1-y_2}_{\mathcal{L}_2}^2 \leq &\;\gamma\norm{u_1-u_2}_{\mathcal{L}_2}^2 + (\delta-\varepsilon)\norm{\begin{bmatrix}
		u_1\\u_2
		\end{bmatrix}}_{\mathcal{L}_2}^2\\
	&+ (\mu-\varepsilon)\norm{d_1}_{\mathcal{L}_2}^2- \eta\norm{d_2}_{\mathcal{L}_2}^2\\
	& + \left\vert\begin{bmatrix}
		x_{1,0}\\x_{2,0}
	\end{bmatrix}\right\vert_K^2,
	\end{aligned}
	\end{equation}
	where $y_i = y_i(t;0,x_{i,0},u_i,d_i)$. This is pivotal as it indicates that the notion of (\textgamma,\textdelta)-similarity also gives a measure on the similarity of general responses. As a result, while comparing the input-output behavior of two systems, we may confine ourselves to the forced response rather than the general response.	
\end{remark} 
\section{Deterministic Systems}\label{DeterministicSystemsSection}
In this section, we consider deterministic systems, \textit{i.e.,} systems of the form \eqref{System} with $d_i = 0$. In this case, \eqref{System} reduces to 
\begin{equation}\label{DeterministicPlant}
\bm{\Sigma}_i: \left\{\begin{aligned}
\dot{x}_i &= A_ix_i + B_iu_i,\\
y_i &= C_ix_i,
\end{aligned}
\right.
\end{equation}
where we maintain the standing assumption that $A_i$ is Hurwitz.
Adopting a notation in line with \eqref{System}, we denote by $y_i(t; t_0,x_{i,0},u_i)$ the output solution of \eqref{DeterministicPlant} at time $t$, for initial condition $x_{i,0}$ and input $u_i$. Considering $\bm{\Sigma}_i$ given by \eqref{DeterministicPlant}, for $\gamma,\delta>0$, $\bm{\Sigma}_2$ is (\textgamma,\textdelta)-similar to $\bm{\Sigma}_1$ if there exist a constant $\varepsilon>0$ such that
\begin{equation}\label{DeterministicGammaConformanceInequality}
\forall u_1,u_2\in\mathcal{L}_2: \; \norm{y_1-y_2}^2 \leq \gamma\norm{u_1-u_2}^2 + (\delta-\varepsilon)\norm{\begin{bmatrix}
	u_1\\u_2
	\end{bmatrix}}^2,
\end{equation}
where $y_i(t) = y_i(t;0,0,u_i)$. From \eqref{DeterministicGammaConformanceInequality}, one immediately cocnludes that for deterministic systems, the notion of (\textgamma,\textdelta)-similarity is symmetric in the sense that $\bm{\Sigma}_1 \preccurlyeq_{\gamma,\delta} \bm{\Sigma}_2$ and $\bm{\Sigma}_2 \preccurlyeq_{\gamma,\delta} \bm{\Sigma}_1$ are equivalent. For deterministic systems, Theorem \ref{GammaConformanceTheorem} readily specializes to the following result.
\begin{corollary}
	Consider systems $\bm{\Sigma}_1$ and $\bm{\Sigma}_2$ given by \eqref{DeterministicPlant}. For $\gamma,\delta>0$, $\bm{\Sigma}_2$ is (\textgamma,\textdelta)-similar to $\bm{\Sigma}_1$ (or equivalently, $\bm{\Sigma}_1$ is (\textgamma,\textdelta)-similar to $\bm{\Sigma}_2$) if and only if there exist a positive definite matrix $X$ such that
	\begin{equation}\label{DeterministicLMI}
	\begin{bmatrix}
	A^TX + XA + \bar{C}^T\bar{C}&X\bar{E}\\\bar{E}^TX&-\bar{Q}(\gamma,\delta)
	\end{bmatrix}\prec 0,
	\end{equation}
	where $A$ in given as in \eqref{CompositeSystemMatrices} while $\bar{C}$, $\bar{E}$, and $\bar{Q}(\gamma,\delta)$ are given as in \eqref{BarMatrices}.
\end{corollary}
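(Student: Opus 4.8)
The plan is to read this off Theorem~\ref{GammaConformanceTheorem} by specializing to the disturbance-free setting, i.e., $E_i = 0$ (formally $q_i = 0$). First I would record how the objects collapse: since $\bm{\Sigma}_1$ carries no disturbance the term $(\mu-\varepsilon)\norm{d_1}^2$ in \eqref{GammaConformanceInequality} disappears, since $\bm{\Sigma}_2$ carries no disturbance the only admissible choice is $d_2 = 0$ and the term $-\eta\norm{d_2}^2$ disappears, so $\mu$ and $\eta$ become vacuous and \eqref{GammaConformanceInequality} is precisely \eqref{DeterministicGammaConformanceInequality}. The parenthetical symmetry claim is then immediate, since interchanging the index labels $1\leftrightarrow 2$ leaves $\norm{u_1-u_2}$, $\norm{\operatorname{col}(u_1,u_2)}$ and $\norm{y_1-y_2}$ unchanged, so \eqref{DeterministicGammaConformanceInequality} for $(\bm{\Sigma}_1,\bm{\Sigma}_2)$ and for $(\bm{\Sigma}_2,\bm{\Sigma}_1)$ coincide.

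Next I would substitute $E_i = 0$ into the composite data \eqref{CompositeSystemMatrices}: this forces $B = 0$ and makes the $E_1$-column of $E$ vanish. In Theorem~\ref{GammaConformanceTheorem} one may then take $\Pi = 0$ (it enters \eqref{FinalLMI} only through $B\Pi$ and through the $d_2$-block of $CX + D\Pi$) and $\tilde{\mu} = \tilde{\eta} = 1$. Applying the congruence \eqref{CongruenceTransformation} and a Schur complement exactly as in the proof of Theorem~\ref{GammaConformanceTheorem}, the rows and columns of \eqref{FinalLMI} attached to $d_1$ and to the $d_2$-component of $z$ decouple and reduce to $-I\prec 0$, and deleting them leaves precisely \eqref{DeterministicLMI} (with the positive definite unknown being $X^{-1}$ rather than $X$, which is immaterial for feasibility). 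Conversely, given $X\succ 0$ solving \eqref{DeterministicLMI}, padding the deleted blocks back in with $\Pi = 0$, $\tilde{\mu} = \tilde{\eta} = 1$ yields a feasible point of \eqref{FinalLMI} for $\gamma,\delta$, so $\bm{\Sigma}_1\preccurlyeq_{\gamma,\delta}\bm{\Sigma}_2$ by Theorem~\ref{GammaConformanceTheorem}. Starting instead from Theorem~\ref{IndependentGammaConformanceTheorem} is even shorter, since $B = 0$ kills $\Pi^TB^T + B\Pi$ outright and \eqref{IndependentLMI} collapses directly.

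The only delicate point is the bookkeeping around the degenerate disturbance dimensions: one must check that the $d_1$- and $d_2$-rows and columns of \eqref{FinalLMI} genuinely decouple (so that no residual coupling to the $\tilde{\mu},\tilde{\eta}$ blocks survives and the reduction to \eqref{DeterministicLMI} is an \emph{equivalence}, not merely an implication), and likewise that the collapse of Definition~\ref{Definition} to \eqref{DeterministicGammaConformanceInequality} is exact; both are routine once written out. Alternatively, the corollary can be proved from scratch by rerunning the chain Lemma~\ref{GammaMuConformanceLemma}, Theorem~\ref{GammaConformanceLemma}, Lemma~\ref{DissipativityLemma} in the disturbance-free case, where the composite system is simply $\dot{x} = Ax + \bar{E}\bar{w}$, $\bar{z} = \bar{C}x$ (already $0$-asymptotically stable since $A$ is Hurwitz), so that $(\gamma,\delta)$-similarity is equivalent to strict dissipativity of this system with respect to the supply rate $\operatorname{col}(\bar{w},\bar{z})^T\operatorname{diag}(\bar{Q}(\gamma,\delta),-I)\operatorname{col}(\bar{w},\bar{z})$, which by the standard dissipativity LMI characterization \cite[Theorem 2.9]{scherer2000linear} is exactly \eqref{DeterministicLMI}.
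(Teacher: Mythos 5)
Your proposal is correct and follows essentially the same route as the paper, which obtains \eqref{DeterministicLMI} from \eqref{FinalLMI} by eliminating the rows and columns corresponding to $d_1$ and $d_2$ (and notes the alternative direct dissipativity argument via \cite[Proposition 3.9]{scherer2000linear}, which you also identify). Your bookkeeping of the degenerate disturbance dimensions, the $X$ versus $X^{-1}$ role swap, and the symmetry of \eqref{DeterministicGammaConformanceInequality} under interchanging the two systems is accurate.
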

As one would expect, \eqref{DeterministicLMI} is seen to be easily obtained from \eqref{FinalLMI} as one eliminates the rows and columns corresponding to $d_1$ and $d_2$. Nevertheless, a dissipativity formulation of \eqref{DeterministicGammaConformanceInequality} also gives \eqref{DeterministicLMI} (see, \textit{e.g.,} \cite[Proposition 3.9]{scherer2000linear}). 

We regard $G_i(s) = C_i(sI-A_i)^{-1}B_i$ as the transfer matrix of \eqref{DeterministicPlant}; as $A_i$ is Hurwitz, we can define the $H_\infty$ norm of $G_i$ as $$\norm{G_i}_\infty = \newsup_{\omega}\sigma_{\text{max}}(G_i(\im\omega)).$$ The following result, exploring the implications (\textgamma,\textdelta)-similarity has on the transfer matrices $G_1$ and $G_2$, immediately follows from \eqref{DeterministicGammaConformanceInequality}.
\begin{proposition}\label{IndividualDetereminism}
	For $\gamma,\delta>0$, if $\bm{\Sigma}_2$ is (\textgamma,\textdelta)-similar to $\bm{\Sigma}_1$ (or equivalently, $\bm{\Sigma}_1$ is (\textgamma,\textdelta)-similar to $\bm{\Sigma}_2$), then the following statements hold:
	\begin{enumerate}
		\item[a)] $\norm{G_i}_{\infty} < (\gamma+\delta)^{\frac{1}{2}}, \;\;\; i=1,2;$ 
		\item[b)]
		$\norm{G_1-G_2}_{\infty} < (2\delta)^{\frac{1}{2}}.$
	\end{enumerate}
\end{proposition}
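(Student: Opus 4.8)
The plan is to derive both inequalities directly from \eqref{DeterministicGammaConformanceInequality} by evaluating it on judiciously chosen pairs of inputs, and then translating the resulting $\mathcal{L}_2$ bounds into $H_\infty$ bounds. The one external fact I rely on is the classical identity that, for a $0$-asymptotically stable linear system, the $\mathcal{L}_2$-induced norm of the input-output map $u \mapsto y$ equals the $H_\infty$ norm of the associated transfer matrix; this is a consequence of Parseval's theorem (see, e.g., \cite[Section 4.2]{zhou1998essentials}). Note also that since the initial conditions are zero in \eqref{DeterministicGammaConformanceInequality}, the output of $\bm{\Sigma}_i$ is exactly the convolution of $u_i$ with the impulse response of $G_i$, and by linearity $y_1 - y_2$ is the output of the system with transfer matrix $G_1 - G_2$ driven by the common input when $u_1 = u_2$.

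For statement a), I would first set $u_2 = 0$ and let $u_1 \in \mathcal{L}_2$ be arbitrary. Then $y_2 \equiv 0$, and \eqref{DeterministicGammaConformanceInequality} collapses to $\norm{y_1}^2 \le (\gamma + \delta - \varepsilon)\norm{u_1}^2$, i.e.\ the $\mathcal{L}_2$-induced norm of $u_1 \mapsto y_1$ is at most $(\gamma + \delta - \varepsilon)^{1/2}$. By the identity above this gives $\norm{G_1}_\infty \le (\gamma + \delta - \varepsilon)^{1/2} < (\gamma + \delta)^{1/2}$, the strictness coming from $\varepsilon > 0$. Setting instead $u_1 = 0$ (or invoking the symmetry of $(\gamma,\delta)$-similarity for deterministic systems noted above) yields the same bound for $G_2$.

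For statement b), I would take $u_1 = u_2 = u$ with $u \in \mathcal{L}_2$ arbitrary. The term $\gamma\norm{u_1 - u_2}^2$ then vanishes, and since $\norm{\operatorname{col}(u,u)}^2 = 2\norm{u}^2$, \eqref{DeterministicGammaConformanceInequality} reduces to $\norm{y_1 - y_2}^2 \le 2(\delta - \varepsilon)\norm{u}^2$. As $y_1 - y_2$ is the response of $G_1 - G_2$ to $u$, the same $\mathcal{L}_2$-gain argument gives $\norm{G_1 - G_2}_\infty \le (2(\delta - \varepsilon))^{1/2} < (2\delta)^{1/2}$.

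I do not anticipate a genuine obstacle here; the only step that deserves explicit mention is the $\mathcal{L}_2$-gain/$H_\infty$-norm equivalence for stable linear systems, and the only subtlety is to keep track of the strict inequalities, which are furnished by the strictly positive slack parameter $\varepsilon$.
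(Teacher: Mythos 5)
Your proof is correct and follows essentially the same route as the paper: choosing $u_2=0$ (resp.\ $u_1=0$) to obtain $\norm{y_1}^2\leq(\gamma+\delta-\varepsilon)\norm{u_1}^2$ for part a), choosing $u_1=u_2=u$ to obtain $\norm{y_1-y_2}^2\leq 2(\delta-\varepsilon)\norm{u}^2$ for part b), and invoking the equivalence between the $\mathcal{L}_2$-induced gain and the $H_\infty$ norm (the paper cites \cite[Theorem 4.3]{zhou1998essentials}), with the slack $\varepsilon>0$ supplying the strict inequalities. No gaps.
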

\begin{proof}
	Following $\bm{\Sigma}_1 \preccurlyeq_{\gamma,\delta} \bm{\Sigma}_2$, there exists $\varepsilon>0$ such that \eqref{DeterministicGammaConformanceInequality} holds. Clearly, the choice of $u_2=0$ results in $y_2 = 0$. Accordingly, \eqref{DeterministicGammaConformanceInequality} implies 
	$$\forall u_1 \in\mathcal{L}_2: \; \norm{y_1}^2 \leq (\gamma + \delta - \varepsilon)\norm{u_1}^2,$$
	indicating that $\norm{G_1}_\infty<(\gamma+\delta)^{\frac{1}{2}}$ \cite[Theorem 4.3]{zhou1998essentials}. A similar reasoning indicates the same bound on $\norm{G_2}_\infty$. On the other hand, choosing $u: = u_1 = u_2 \in \mathcal{L}_2$, it follows from \eqref{DeterministicGammaConformanceInequality} that
	$$\norm{y_1-y_2}^2 \leq  2(\delta-\varepsilon)\norm{u}^2,$$
	which implies (b).
\end{proof}
Proposition~\ref{IndividualDetereminism} gives another interpretation of the role that $\gamma$ and $\delta$ play. In particular, the parameter $\delta$ measures the similarity of $G_1$ and $G_2$ in the sense of the $H_\infty$ norm. Consequently, $\delta$ suggests how precise $\bm{\Sigma}_2$ replicates the input-output behavior of $\bm{\Sigma}_1$. In addition, the parameter $\gamma$, together with $\delta$, indicates how the external input $u_i$ affects the output $y_i$. 

\section{Compositional Reasoning}\label{CompositionSection}
As many systems can be viewed as an interconnection of subsystems, it is often desirable to study such systems in terms of their constituting subsystems. This motivates one to explore how (\textgamma,\textdelta)-similarity on a `system' level may be concluded from (\textgamma,\textdelta)-similarity on a 	`component' level. Our goal is therefore to extend the notion from individual systems to the interconnections they constitute.

In preparation for this compositional reasoning, we address the following property which is an immediate consequence of (\textgamma,\textdelta)-similarity. The proof of the following Proposition is given in the Appendix.
\begin{proposition}\label{FiniteGainProperty}
	Suppose $\bm{\Sigma}_1 \preccurlyeq_{\gamma,\delta} \bm{\Sigma}_2$ for some $\gamma,\delta>0$. Then, there exist $\ell,k>0$ such that
	\begin{equation}\label{FiniteGain}
	\forall u_1,u_2,d_1\in\mathcal{L}_2, \exists d_2 \in \mathcal{L}_2: \; \norm{\begin{bmatrix}
		y_1\\y_2
		\end{bmatrix}}^2 \leq \ell \norm{\begin{bmatrix}
		u_1\\u_2
		\end{bmatrix}}^2 + k\norm{d_1}^2,
	\end{equation}
	where $y_i(t) = y_i(t;0,0,u_i,d_i)$.
\end{proposition}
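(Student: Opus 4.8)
The plan is to leverage the earlier characterization results so that the finite-gain property \eqref{FiniteGain} follows without redoing the heavy lifting. Specifically, I would start from Lemma~\ref{DissipativityLemma}, which tells us that $\bm{\Sigma}_1 \preccurlyeq_{\gamma,\delta} \bm{\Sigma}_2$ is equivalent to the existence of $\eta,\mu>0$, a matrix $F$, and a positive definite $K$ such that the closed-loop composite system \eqref{ClosedLoopCompositeSystemsLemma} (with $d_2 = Fx$) is $0$-asymptotically stable and strictly dissipative with respect to the supply rate \eqref{SupplyFunction} and storage function $V(x) = x^T K x$. Fixing that choice of $d_2 = Fx$ gives us the disturbance signal $d_2 \in \mathcal{L}_2$ needed in \eqref{FiniteGain}; its $\mathcal{L}_2$-membership follows since $A+BF$ is Hurwitz and $w = \operatorname{col}(u_1,u_2,d_1) \in \mathcal{L}_2$.

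Next, I would integrate the dissipation inequality \eqref{DissipationInequality} from $t_0 = 0$ (with $x(0) = 0$) to $t_1 \to \infty$. Since $x(t) \to 0$ and $V \geq 0$, this yields $\norm{z}_R^2 \leq \norm{w}_Q^2 - \epsilon \norm{w}^2 \leq \norm{w}_Q^2$, i.e. essentially \eqref{AuxilaryDefinition}. Recalling $z = \operatorname{col}(y_1 - y_2, d_2)$ and the block structure of $R$ and $Q$ in \eqref{QR}, the left side dominates $\norm{y_1-y_2}^2$ and the right side is bounded by $(\gamma+\delta)(\norm{u_1}^2+\norm{u_2}^2) + \mu\norm{d_1}^2$ (the cross term $-2\gamma\langle u_1,u_2\rangle$ only helps, up to a constant, or can be bounded via Cauchy--Schwarz). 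So we obtain a bound of the form $\norm{y_1-y_2}^2 \leq c_1 \norm{\operatorname{col}(u_1,u_2)}^2 + c_2 \norm{d_1}^2$ for explicit constants depending on $\gamma,\delta,\mu$.

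To finish, I need individual bounds on $\norm{y_1}$ and $\norm{y_2}$, not just on their difference. For that I would invoke the standing $0$-asymptotic stability: each $A_i$ is Hurwitz, so each $\bm{\Sigma}_i$ has finite $\mathcal{L}_2$-gain from $\operatorname{col}(u_i,d_i)$ to $y_i$ (the same fact already used in the proof of Proposition~\ref{ReflexivityTransitivity} via \cite[Section 4.1]{desoer2009feedback}). This gives $\norm{y_1}^2 \leq a_1\norm{u_1}^2 + b_1\norm{d_1}^2$ directly. For $y_2$, I combine $\norm{y_2} \leq \norm{y_1-y_2} + \norm{y_1}$ with the two bounds just obtained, using $\norm{a+b}^2 \leq 2\norm{a}^2 + 2\norm{b}^2$; alternatively, since $d_2 = Fx \in \mathcal{L}_2$ with $\norm{d_2}$ controlled by $\norm{w}$ (again Hurwitz $A+BF$), one can directly bound $\norm{y_2}^2$ through $\bm{\Sigma}_2$'s finite gain from $\operatorname{col}(u_2,d_2)$. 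Either route yields $\norm{y_2}^2 \leq a_2\norm{\operatorname{col}(u_1,u_2)}^2 + b_2\norm{d_1}^2$. Adding the bounds on $\norm{y_1}^2$ and $\norm{y_2}^2$ produces \eqref{FiniteGain} with $\ell = a_1+a_2$ and $k = b_1+b_2$ (up to the factor-of-two constants), completing the argument.

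The main obstacle is mild: it is the bookkeeping needed to pass from the dissipation inequality, which naturally controls $\norm{y_1-y_2}$ and $\norm{d_2}$, to a bound on the \emph{stacked} output $\operatorname{col}(y_1,y_2)$. The cleanest handling is probably to note that the individual $\mathcal{L}_2$-gain bound on $\bm{\Sigma}_1$ (pure Hurwitz-stability fact) plus the triangle inequality immediately covers $y_2$, so no new estimate beyond standard linear-systems $\mathcal{L}_2$-gain theory is required; the nondeterminism is entirely absorbed by the already-established existence of the state-feedback $d_2$.
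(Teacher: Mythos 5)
Your argument is correct and matches the paper's proof in all essentials: bound $\norm{y_1}$ by the finite $\mathcal{L}_2$-gain of the Hurwitz system $\bm{\Sigma}_1$, bound $\norm{y_1-y_2}$ via the $(\gamma,\delta)$-similarity inequality with the chosen $d_2$ (dropping the helpful $-\eta\norm{d_2}^2$ term and absorbing the cross term in $Q$ by Cauchy--Schwarz), and recover $\norm{y_2}$ by the triangle inequality. The only difference is cosmetic: the paper takes $d_2$ and the bound on $\norm{y_1-y_2}$ directly from Definition~\ref{Definition}, so your detour through the dissipativity characterization of Lemma~\ref{DissipativityLemma} is harmless but unnecessary.
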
    

We commence our study with characterizing the (\textgamma,\textdelta)-similarity of systems structured as series interconnections, defined as follows.
\begin{definition}
	The system $\bm{\Sigma}_1$ is said to be in a series interconnection with the system $\bm{\Sigma}_2$, denoted by $\bm{\Sigma}_1 \parallel_{\text{s}} \bm{\Sigma}_2$, whenever $u_2 = y_1$, see Figure~\ref{Series}. Taking $x_s := \operatorname{col}(x_1,x_2)$, $u_s := u_1$, $d_s := \operatorname{col}(d_1,d_2)$, and $y_s := y_2$, the system $\bm{\Sigma}_1 \parallel_{\text{s}} \bm{\Sigma}_2$ is seen to be governed by the dynamics
	\begin{equation*}
	\begin{aligned}
	\dot{x}_s &= A_sx_s + B_su_s + E_sd_s,\\
	y_s &= C_sx_s,
	\end{aligned}
	\end{equation*}
	where
	\begin{equation*}
	\begin{aligned}
	A_s &= \begin{bmatrix}
	A_1&0\\
	B_2C_1&A_2
	\end{bmatrix}, \; B_s = \begin{bmatrix}
	B_1\\0
	\end{bmatrix}, \; E_s = \begin{bmatrix}
	E_1&0\\0&E_2
	\end{bmatrix},\\
	C_s &= \begin{bmatrix}
	0&C_2
	\end{bmatrix}.
	\end{aligned}
	\end{equation*}
\end{definition}
\vspace*{2mm}
It immediately follows from the $0$-asymptotic stability of $\bm{\Sigma}_1$ and $\bm{\Sigma}_2$ that $\bm{\Sigma}_1 \parallel_s \bm{\Sigma}_2$ is also $0$-asymptotically stable. 
\begin{figure}
	\centering
	\includegraphics[width=0.3\textwidth]{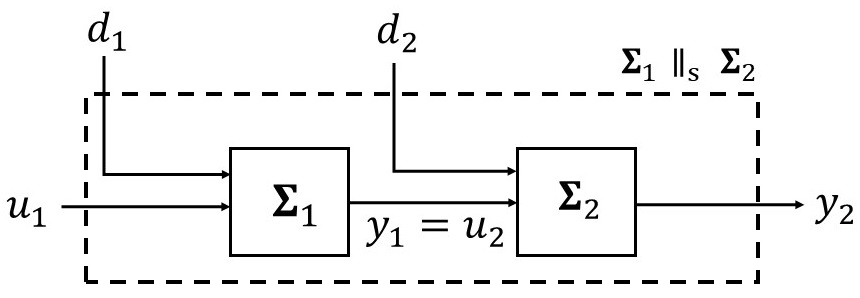}
	\caption{The series interconnection of systems $\bm{\Sigma}_1$ and $\bm{\Sigma}_2$.}
	\label{Series}
\end{figure} 

The following proposition explores the extension of (\textgamma,\textdelta)-similarity to series interconnection. The proof of the proposition can be found in the Appendix. 
\begin{proposition}\label{SeriesGammaConformance}
	Suppose $\bm{\Sigma}_1 \preccurlyeq_{\gamma_{1},\delta_{1}} \bm{\Sigma}_{1}'$ and $\bm{\Sigma}_2 \preccurlyeq_{\gamma_{2},\delta_{2}} \bm{\Sigma}_{2}'$. There exist $\gamma_s,\delta_s>0$ such that
	\begin{equation*}
	\bm{\Sigma}_1 \parallel_{\text{s}} \bm{\Sigma}_2 \preccurlyeq_{\gamma_s,\delta_s} \bm{\Sigma}_{1}' \parallel_{\text{s}} \bm{\Sigma}_{2}'.
	\end{equation*}
\end{proposition}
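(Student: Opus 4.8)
The plan is to build an explicit disturbance $d_s' = \operatorname{col}(d_1', d_2')$ for the target interconnection $\bm{\Sigma}_1' \parallel_{\text{s}} \bm{\Sigma}_2'$ by applying the two component-level similarity relations in cascade, and then to bound the output error of the series interconnection in terms of the component output errors using the finite-gain property of Proposition~\ref{FiniteGainProperty}. Concretely, fix inputs $u_s = u_1$, $u_s' = u_1'$ and a disturbance $d_s = \operatorname{col}(d_1, d_2)$ for $\bm{\Sigma}_1 \parallel_{\text{s}} \bm{\Sigma}_2$. The external input of $\bm{\Sigma}_1$ is $u_1$ and that of $\bm{\Sigma}_1'$ is $u_1'$; since $\bm{\Sigma}_1 \preccurlyeq_{\gamma_1,\delta_1} \bm{\Sigma}_1'$, given $d_1$ there is $d_1' \in \mathcal{L}_2$ with
\begin{equation*}
\norm{y_1 - y_1'}^2 \leq \gamma_1\norm{u_1 - u_1'}^2 + (\delta_1 - \varepsilon_1)\norm{\operatorname{col}(u_1, u_1')}^2 + (\mu_1 - \varepsilon_1)\norm{d_1}^2 - \eta_1\norm{d_1'}^2.
\end{equation*}
In the interconnections the input of $\bm{\Sigma}_2$ is $y_1$ and the input of $\bm{\Sigma}_2'$ is $y_1'$; since $\bm{\Sigma}_2 \preccurlyeq_{\gamma_2,\delta_2} \bm{\Sigma}_2'$, given $d_2$ there is $d_2' \in \mathcal{L}_2$ with
\begin{equation*}
\norm{y_2 - y_2'}^2 \leq \gamma_2\norm{y_1 - y_1'}^2 + (\delta_2 - \varepsilon_2)\norm{\operatorname{col}(y_1, y_1')}^2 + (\mu_2 - \varepsilon_2)\norm{d_2}^2 - \eta_2\norm{d_2'}^2.
\end{equation*}
The output of the series interconnection is $y_s = y_2$, $y_s' = y_2'$, so $\norm{y_s - y_s'}^2$ is exactly the left-hand side of the second inequality.

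Now I would substitute the first inequality into the $\gamma_2\norm{y_1 - y_1'}^2$ term of the second. The $\gamma_2\gamma_1\norm{u_1-u_1'}^2$ piece becomes the $\gamma_s\norm{u_s - u_s'}^2$ contribution; the $\gamma_2(\delta_1-\varepsilon_1)\norm{\operatorname{col}(u_1,u_1')}^2$ piece, together with the $\norm{\operatorname{col}(y_1,y_1')}^2$ term, needs to be re-expressed in terms of $\norm{\operatorname{col}(u_s, u_s')}^2$ and the disturbances. This is where Proposition~\ref{FiniteGainProperty} enters: applied to $\bm{\Sigma}_1 \preccurlyeq_{\gamma_1,\delta_1} \bm{\Sigma}_1'$ it yields constants $\ell_1, k_1 > 0$ with $\norm{\operatorname{col}(y_1, y_1')}^2 \leq \ell_1\norm{\operatorname{col}(u_1, u_1')}^2 + k_1\norm{d_1}^2$ — but one must check that the $d_1'$ furnished by the finite-gain property can be taken to be the \emph{same} $d_1'$ already chosen above. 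Re-reading the proof of Proposition~\ref{FiniteGainProperty} (in the Appendix), this $d_1'$ is precisely the one from the similarity relation, so the bounds are compatible. Collecting terms, one obtains
\begin{equation*}
\norm{y_s - y_s'}^2 \leq \gamma_s\norm{u_s - u_s'}^2 + \delta_s\norm{\operatorname{col}(u_s, u_s')}^2 + c_1\norm{d_1}^2 + c_2\norm{d_2}^2 - \eta_1'\norm{d_1'}^2 - \eta_2\norm{d_2'}^2
\end{equation*}
for suitable $\gamma_s, \delta_s, c_1, c_2 > 0$ and some $\eta_1' \leq \eta_1$ (the $\eta_1$ term may be partly consumed if a multiple of $\norm{d_1'}^2$ is needed to absorb cross terms — one should verify this does not happen, or that enough of $\eta_1$ survives). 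Since $\norm{d_s}^2 = \norm{d_1}^2 + \norm{d_2}^2$, choosing $\mu_s$ larger than $\max\{c_1, c_2\} + \varepsilon_s$ and $\varepsilon_s$ small, and $\eta_s = \min\{\eta_1', \eta_2\}$, puts the inequality into the form of \eqref{GammaConformanceInequality} for the pair $\bm{\Sigma}_1 \parallel_{\text{s}} \bm{\Sigma}_2 \preccurlyeq_{\gamma_s, \delta_s} \bm{\Sigma}_1' \parallel_{\text{s}} \bm{\Sigma}_2'$, which is the claim.

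The main obstacle I anticipate is bookkeeping of the sign of the disturbance terms: Definition~\ref{Definition} requires a \emph{negative} coefficient $-\eta_s$ on $\norm{d_s'}^2 = \norm{d_1'}^2 + \norm{d_2'}^2$, so one must ensure that substituting the first inequality into the second does not introduce an uncontrolled \emph{positive} multiple of $\norm{d_1'}^2$ — note $d_1'$ appears in the dynamics generating $y_1'$, hence in $\norm{\operatorname{col}(y_1,y_1')}^2$, and the finite-gain estimate bounds that norm using $\norm{d_1}^2$ rather than $\norm{d_1'}^2$, which is exactly why Proposition~\ref{FiniteGainProperty} is phrased the way it is. A secondary, purely technical, point is that $d_2$ is an external disturbance of the interconnection while $y_1$ (resp. $y_1'$) serving as input to $\bm{\Sigma}_2$ (resp. $\bm{\Sigma}_2'$) is an \emph{internal} signal; one must be careful that the similarity relation for $\bm{\Sigma}_2 \preccurlyeq_{\gamma_2,\delta_2} \bm{\Sigma}_2'$ is applied with inputs $y_1, y_1' \in \mathcal{L}_2$, which holds because $A_1, A_1'$ are Hurwitz and $u_1, u_1', d_1, d_1' \in \mathcal{L}_2$. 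No small-gain condition is needed here (in contrast to the feedback case), since the series interconnection propagates $\mathcal{L}_2$ signals to $\mathcal{L}_2$ signals unconditionally.
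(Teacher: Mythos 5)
Your proposal is correct and follows essentially the same route as the paper's proof: apply the first similarity relation to obtain $d_1'$, feed $y_1,y_1'$ as inputs into the second similarity relation to obtain $d_2'$, and use Proposition~\ref{FiniteGainProperty} (with the same $d_1'$, which is indeed how that proposition is proved) to convert $\norm{\operatorname{col}(y_1,y_1')}^2$ into a bound in $\norm{\operatorname{col}(u_1,u_1')}^2$ and $\norm{d_1}^2$, yielding $\gamma_s=\gamma_1\gamma_2$ and $\delta_s=\gamma_2\delta_1+\delta_2\ell_1$. The sign-bookkeeping concern you raise resolves exactly as you suspect, since the finite-gain estimate never introduces a positive multiple of $\norm{d_1'}^2$.
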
 
\vspace*{1mm}
Proposition~\ref{SeriesGammaConformance} allows for \emph{modular} system comparison which is crucial for specification verification as it enables one to conduct global specification verification on the basis of local specifications. 

We are now interested in characterizing the (\textgamma,\textdelta)-similarity of a more complicated structure, \textit{i.e.,} feedback interconnection. 
\begin{definition}
	The system $\bm{\Sigma}_1$ is said to be in a feedback interconnection with the system $\bm{\Sigma}_2$, denoted by $\bm{\Sigma}_1 \parallel_{\text{f}} \bm{\Sigma}_2$, whenever $u_1 = e_1 - y_2$ and $u_2 = e_2 + y_1$, where $e_1$ and $e_2$ are external signals, see Figure~\ref{Feedback}. Taking $x_f := \operatorname{col}(x_1,x_2)$, $u_f := \operatorname{col}(e_1,e_2)$, $d_f := \operatorname{col}(d_1,d_2)$, and $y_f := \operatorname{col}(y_1,y_2)$, the system $\bm{\Sigma}_1 \parallel_{\text{f}} \bm{\Sigma}_2$ is seen to be governed by the dynamics
	\begin{equation*}
	\begin{aligned}
	\dot{x}_f &= A_fx_f + B_fu_f + E_fd_f,\\
	y_f &= C_fx_f,
	\end{aligned}
	\end{equation*}
	where
	\begin{equation*}
	\begin{aligned}
	A_f &= \begin{bmatrix}
	A_1&-B_1C_2\\
	B_2C_1&A_2
	\end{bmatrix}, \; B_f = \begin{bmatrix}
	B_1&0\\0&B_2
	\end{bmatrix}, \\
	 E_f &= \begin{bmatrix}
	E_1&0\\0&E_2
	\end{bmatrix},\qquad\quad\;\;
	C_f = \begin{bmatrix}
	C_1&0\\
	0&C_2
	\end{bmatrix}.
	\end{aligned}
	\end{equation*}
\end{definition}
\vspace*{2mm}

As one would expect, $\bm{\Sigma}_1 \parallel_{\text{f}} \bm{\Sigma}_2$ is not necessarily $0$-asymptotically stable. As the notion of (\textgamma,\textdelta)-similarity asks for $0$-asymptotically stable systems, we may only consider the case where $\bm{\Sigma}_1 \parallel_{\text{f}} \bm{\Sigma}_2$ is $0$-asymptotically stable, which in turn guarantees the internal stability of the interconnection. As a result, from now on, we assume $\bm{\Sigma}_1$ and $\bm{\Sigma}_2$ are such that $A_f$ is Hurwitz. 
\begin{figure}
	\centering
	\includegraphics[width=0.3\textwidth]{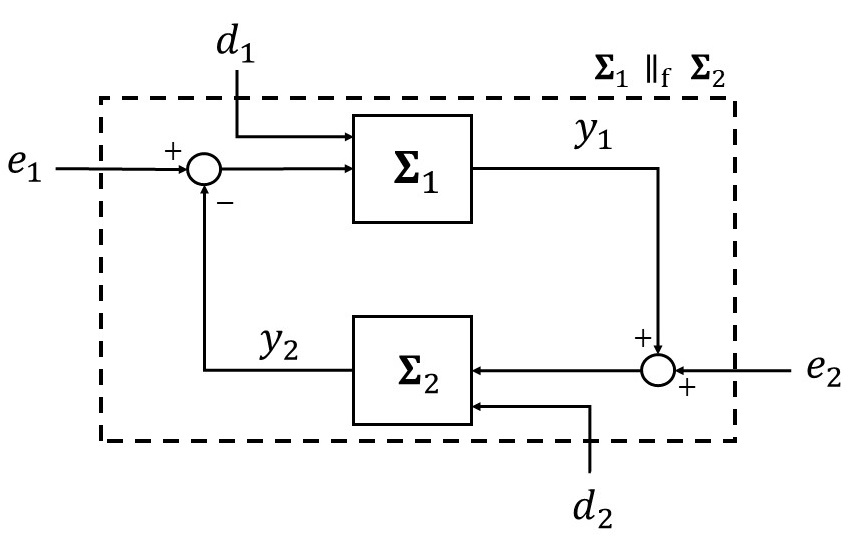}
	\caption{The feedback interconnection of systems $\bm{\Sigma}_1$ and $\bm{\Sigma}_2$.}
	\label{Feedback}
\end{figure} 

In contrast to series interconnection, the (\textgamma,\textdelta)-similarity of feedback interconnections does not automatically follow the (\textgamma,\textdelta)-similarity of the constituting subsystems. The following result, whose proof can be found in the Appendix, characterizes the condition under which (\textgamma,\textdelta)-similarity is preserved through feedback interconnection.  
\begin{proposition}\label{FeedbackGammaConformance}
	Suppose $\bm{\Sigma}_1 \preccurlyeq_{\gamma_{1},\delta_{1}} \bm{\Sigma}_{1}'$ and $\bm{\Sigma}_2 \preccurlyeq_{\gamma_{2},\delta_{2}} \bm{\Sigma}_{2}'$ such that for some $\epsilon>0$, 
	\begin{equation}\label{SmallGain}
	\gamma_{1}\gamma_{2}<\frac{1}{(1+\epsilon)^2}, \;\; \ell_{1}\ell_{2}<\frac{1}{(1+\epsilon)^2},
	\end{equation}
	with $\ell_{1},\ell_{2}$ given in \eqref{FiniteGain}, and $\bm{\Sigma}_1 \parallel_{\text{f}} \bm{\Sigma}_2$ and $\bm{\Sigma}_{1}' \parallel_{\text{f}} \bm{\Sigma}_{2}'$ are $0$-asymptotically stable. Then, there exist $\gamma_f,\delta_f>0$ such that 
	\begin{equation*}
	\bm{\Sigma}_1 \parallel_{\text{f}} \bm{\Sigma}_2 \preccurlyeq_{\gamma_f,\delta_f} \bm{\Sigma}_{1}' \parallel_{\text{f}} \bm{\Sigma}_{2}'.
	\end{equation*} 
\end{proposition}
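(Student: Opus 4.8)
The plan is to mimic the structure of the proof of Proposition~\ref{SeriesGammaConformance}, working at the level of the $\mathcal{L}_2$ inequalities supplied by Definition~\ref{Definition} and the finite-gain property of Proposition~\ref{FiniteGainProperty}, and to use the small-gain conditions \eqref{SmallGain} to close the loop. Fix external signals $u_f = \operatorname{col}(e_1,e_2)$ and $u_f' = \operatorname{col}(e_1',e_2')$ for $\bm{\Sigma}_1\parallel_{\text f}\bm{\Sigma}_2$ and $\bm{\Sigma}_1'\parallel_{\text f}\bm{\Sigma}_2'$, and a disturbance $d_f = \operatorname{col}(d_1,d_2)$ for the first interconnection. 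First I would show that the internal signals of the interconnected system are well-defined and bounded: the relations $u_1 = e_1 - y_2$, $u_2 = e_2 + y_1$ together with the finite-gain bound \eqref{FiniteGain} applied to each subsystem yield, via the second inequality $\ell_1\ell_2 < (1+\epsilon)^{-2}$, an estimate of $\norm{\operatorname{col}(u_1,u_2)}$ and $\norm{\operatorname{col}(y_1,y_2)}$ in terms of $\norm{u_f}$ and $\norm{d_1},\norm{d_2}$. This step legitimizes everything that follows, since $0$-asymptotic stability of $\bm{\Sigma}_1\parallel_{\text f}\bm{\Sigma}_2$ guarantees the internal signals lie in $\mathcal{L}_2$.

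Next, using $\bm{\Sigma}_1\preccurlyeq_{\gamma_1,\delta_1}\bm{\Sigma}_1'$ I obtain, for the given $d_1$, a disturbance $d_1'$ and the inequality \eqref{GammaConformanceInequality} bounding $\norm{y_1 - y_1'}^2$ in terms of $\norm{u_1 - u_1'}^2$, $\norm{\operatorname{col}(u_1,u_1')}^2$, $\norm{d_1}^2$, $\norm{d_1'}^2$; similarly from $\bm{\Sigma}_2\preccurlyeq_{\gamma_2,\delta_2}\bm{\Sigma}_2'$ I get $d_2'$ and a bound on $\norm{y_2 - y_2'}^2$. The disturbance for the interconnected system is then $d_f' = \operatorname{col}(d_1',d_2')$, and the inputs to the primed subsystems are forced by $u_1' = e_1' - y_2'$, $u_2' = e_2' + y_1'$. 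Now substitute the interconnection relations: $u_1 - u_1' = (e_1 - e_1') - (y_2 - y_2')$ and $u_2 - u_2' = (e_2 - e_2') + (y_1 - y_1')$. Writing $\tilde y_i := y_i - y_i'$ and $\tilde e_i := e_i - e_i'$, the two (\textgamma,\textdelta)-similarity inequalities, after triangle/Cauchy-Schwarz splitting and using $\norm{u_i - u_i'}^2 \le (1+\epsilon)(\norm{\tilde e_i}^2 + \text{(cross terms)})$, give

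\begin{equation*}
\begin{aligned}
\norm{\tilde y_1}^2 &\le \gamma_1(1+\tfrac1\epsilon)\norm{\tilde y_2}^2 + \gamma_1(1+\epsilon)\norm{\tilde e_1}^2 + (\text{terms in } \norm{u_f},\norm{u_f'},\norm{d_1}) - \eta_1\norm{d_1'}^2,\\
\norm{\tilde y_2}^2 &\le \gamma_2(1+\tfrac1\epsilon)\norm{\tilde y_1}^2 + \gamma_2(1+\epsilon)\norm{\tilde e_2}^2 + (\text{terms in } \norm{u_f},\norm{u_f'},\norm{d_2}) - \eta_2\norm{d_2'}^2.
\end{aligned}
\end{equation*}

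Substituting one into the other and collecting the $\norm{\tilde y_1}^2$ terms, the coefficient on the left is $1 - \gamma_1\gamma_2(1+\tfrac1\epsilon)^2$, which is strictly positive precisely because the first condition in \eqref{SmallGain} can be arranged (choosing the split constant compatibly with $\epsilon$) to make $\gamma_1\gamma_2(1+\tfrac1\epsilon)^2 < 1$ — this is where the small-gain hypothesis does its work. Dividing through, and doing the symmetric computation for $\norm{\tilde y_2}^2$, yields a bound of the form $\norm{\tilde y_1}^2 + \norm{\tilde y_2}^2 \le \gamma_f\norm{\operatorname{col}(\tilde e_1,\tilde e_2)}^2 + (\delta_f - \varepsilon_f)\norm{\operatorname{col}(u_f,u_f')}^2 + (\mu_f - \varepsilon_f)\norm{d_f}^2 - \eta_f\norm{d_f'}^2$ for suitable positive constants expressible in terms of $\gamma_1,\gamma_2,\delta_1,\delta_2,\eta_1,\eta_2,\mu_1,\mu_2,\ell_1,\ell_2,\epsilon$; recalling $y_f - y_f' = \operatorname{col}(\tilde y_1,\tilde y_2)$ and $u_f - u_f' = \operatorname{col}(\tilde e_1,\tilde e_2)$, this is exactly \eqref{GammaConformanceInequality} for $\bm{\Sigma}_1\parallel_{\text f}\bm{\Sigma}_2 \preccurlyeq_{\gamma_f,\delta_f}\bm{\Sigma}_1'\parallel_{\text f}\bm{\Sigma}_2'$, and we are done after verifying the residual $\varepsilon_f > 0$ can be chosen (which forces shrinking $\delta_f,\mu_f$ slightly, allowed since these are existentially quantified).

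The main obstacle I anticipate is bookkeeping: the cross terms generated when expanding $\norm{u_i - u_i'}^2$ and $\norm{\operatorname{col}(u_i,u_i')}^2$ in terms of the external signals $e_i,e_i'$ and the (not-yet-bounded) error signals $\tilde y_j$ must be absorbed carefully so that, after the loop is closed, the coefficient on $\norm{d_f'}^2$ stays negative and the coefficient on $\norm{d_f}^2$ is of the form $\mu_f - \varepsilon_f$ with $\mu_f>0$. In particular the quantities $\norm{\operatorname{col}(u_1,u_1')}^2$ that appear with coefficient $\delta_i - \varepsilon_i$ are \emph{not} external inputs of the interconnection, so they themselves must be re-bounded using Step~1 (the finite-gain estimate applied to both the unprimed and primed interconnections — note $\ell_1\ell_2<(1+\epsilon)^{-2}$ is needed for \emph{both}), introducing further dependence on $\norm{u_f},\norm{u_f'},\norm{d_f}$. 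Threading all these through while keeping every constant positive is the delicate part; the algebra is otherwise routine and parallels the series case. As in Proposition~\ref{SeriesGammaConformance}, the resulting $\gamma_f,\delta_f$ are explicit but unwieldy, so I would simply assert their existence.
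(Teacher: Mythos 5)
Your proposal follows the paper's proof essentially step for step: apply Definition~\ref{Definition} to each subsystem under the interconnection substitutions $u_1 = e_1 - y_2$, $u_2 = e_2 + y_1$, split the resulting cross terms with a Young-type inequality parametrized by $\epsilon$, close the loop on $\|\tilde y_1\|^2,\|\tilde y_2\|^2$ using the first condition in \eqref{SmallGain}, and re-bound the internal quantities $\|\operatorname{col}(y_i,y_i')\|^2$ via Proposition~\ref{FiniteGainProperty} together with the second condition in \eqref{SmallGain}, exactly as in the paper. One correction to your displayed estimates: the Young weights are swapped --- the factor $1+\epsilon$ must multiply the output-difference terms $\|\tilde y_j\|^2$ and the factor $1+1/\epsilon$ the external-difference terms $\|\tilde e_i\|^2$, so that the loop gain becomes $(1+\epsilon)^2\gamma_1\gamma_2 < 1$, which is precisely what \eqref{SmallGain} supplies; the condition $\gamma_1\gamma_2(1+1/\epsilon)^2<1$ that your version requires does not follow from \eqref{SmallGain} when $\epsilon\leq 1$.
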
 
\vspace*{1mm}
\begin{remark}
	The parameter $\epsilon$ provokes a trade-off between the permissible domain of the gains $\gamma_1$, $\gamma_2$, $\ell_1$, and $\ell_2$ and the magnitude of the parameters $\gamma_f$ and $\delta_f$ as follows from the proof of Proposition~\ref{FeedbackGammaConformance}. For larger choices of $\epsilon$,  Proposition~\ref{FeedbackGammaConformance} is applicable to a smaller class of systems, \textit{i.e.,} systems with smaller $\gamma_1$, $\gamma_2$, $\ell_1$, and $\ell_2$. Nevertheless, for such choices, the resulting $\gamma_f$ and $\delta_f$ are smaller. In contrast, smaller choices of $\epsilon$ lead to a broader applicability of Proposition~\ref{FeedbackGammaConformance} with larger $\gamma_f$ and $\delta_f$. 
\end{remark}
\begin{remark}
	As (\textgamma,\textdelta)-similarity asks for $0$-asymptotically stable systems, we restricted ourselves to $0$-asymptotically stable feedback interconnections. Nevertheless, given the conditions of Proposition~\ref{FeedbackGammaConformance}, the bound on the output deviation holds regardless of $\bm{\Sigma}_1 \parallel_{\text{f}} \bm{\Sigma}_2$ and $\bm{\Sigma}_1' \parallel_{\text{f}} \bm{\Sigma}_2'$ being $0$-asymptotically stable. More specifically, given the condition $\ell_{1}\ell_{2}<1$, it follows from the proof of Proposition~\ref{FiniteGainProperty} that $\bm{\Sigma}_1 \parallel_{\text{f}} \bm{\Sigma}_2$ and $\bm{\Sigma}_1' \parallel_{\text{f}} \bm{\Sigma}_2'$ satisfy the small gain theorem \cite[Theorem 3.4.1]{green2012linear}, \textit{i.e.,} the feedback interconnections $\bm{\Sigma}_1 \parallel_{\text{f}} \bm{\Sigma}_2$ and $\bm{\Sigma}_1' \parallel_{\text{f}} \bm{\Sigma}_2'$ are internally stable. As a result, subject to $\mathcal{L}_2$ signals, the output solution uniquely exists and belongs to the $\mathcal{L}_2$ space. Consequently, the $0$-asymptotic stability of the feedback connection is not necessary for the bound on the output deviation to hold.  
\end{remark}

\section{Illustrative Example}\label{IllustrativeSection}
To illustrate the notion of (\textgamma,\textdelta)-similarity, we consider the problem of component replacement in the electrical network depicted in Figure~\ref{DetailedStructure}. 
The component $S$ supplies $M$ which, in turn, feeds $D$ with a specific voltage. While the architecture of $M$ is given, we have no information regarding the architecture of $S$ and $D$. 

\begin{figure}[h]
	\centering
	\includegraphics[width=0.35\textwidth]{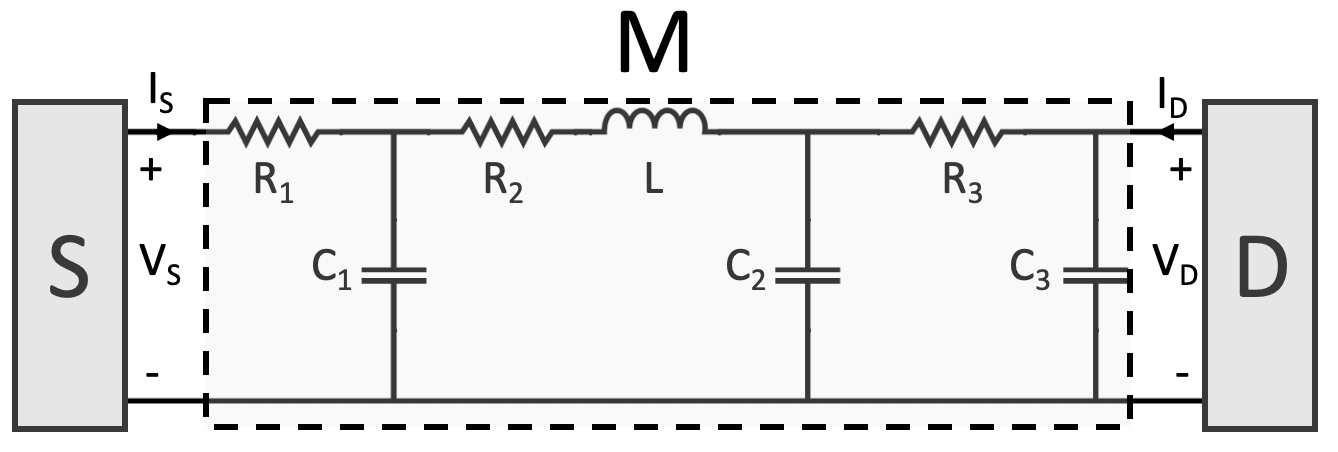}
	\caption{Supplied with the voltage $V_S$, $M$ feeds $D$ which, in turn, may inject the unwanted current $I_D$ into $M$. Note however that $V_S$ and $I_D$ are not known.}
	\label{DetailedStructure}
\end{figure} 
The detailed architecture of $M$ is depicted in Figure~\ref{DetailedStructure} with the parameter values given in Table~\ref{PrameterValues}. The component $S$ supplies $M$ with the voltage $V_S$. On the other hand, while being fed by $M$, the component $D$ may inject the unwanted current $I_D$, \textit{e.g.,} an inrush current, into $M$. It is worth mentioning that $V_S$ and $I_D$ are unknown. 

Suppose $M$ suffers a failure as its constituting elements reach the end of their life-time. We would like to replace $M$ with the component $M'$, depicted in Figure~\ref{ReplacingComponent} with parameter values given in Table~\ref{PrameterValues}. Such a component can be considerably cheaper as it comprises fewer elements. 
\begin{figure}[h]
	\centering
	\includegraphics[width=0.35\textwidth]{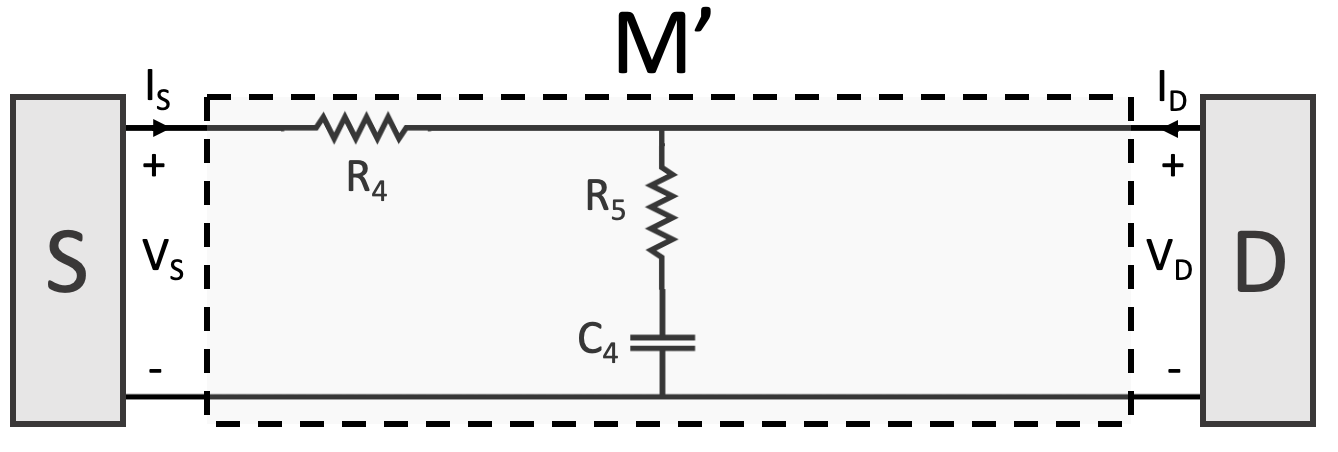}
	\caption{A candidate for replacing $M$, the component $M'$ comprises less elements.}
	\label{ReplacingComponent}
\end{figure} 
As a result of environmental noise, the voltages with which $M$ and $M'$ are supplied are similar, but not identical. On the other hand, the current that $D$ injects into $M$ is not necessarily the same as the one it injects into $M'$. Despite this, in order for the network to operate properly, $M'$ has to feed $D$ with almost the same voltage $M$ feeds $D$ with. This would be indeed the case if the input-output behavior of $M'$ is (approximately) contained in that of $M$. We will therefore use (\textgamma,\textdelta)-similarity to measure to what extent the input-output behavior of $M'$ is contained in that of $M$, see Remark~\ref{Application}. In particular, we desire $M'\preccurlyeq_{\gamma,\delta}M$ for some (hopefully small) $\gamma$ and $\delta$. 
	\begin{table}[ht]%
		\centering
		\caption{\footnotesize Parameter Values.}\label{PrameterValues}
		\begin{tabular*}{150pt}{@{\extracolsep\fill}ccc@{\extracolsep\fill}}	
			\toprule
			\textbf{Parameter} & \textbf{Value}&\textbf{Unit}\\
			\midrule
			$R_1$&10&$\Omega$ \\
			$R_2$&100&$\Omega$\\
			$R_3$&10&$\Omega$\\
			$R_4$&20&$\Omega$\\
			$R_5$&10&$\Omega$\\
			$C_1$&0.1&$F$\\
			$C_2$&0.3&$F$\\
			$C_3$&0.1&$F$\\
			$C_4$&0.5&$F$\\
			$L$&0.1&$H$\\
			\bottomrule
		\end{tabular*}
	\end{table}
Let $i_L$ denote the current passing through the inductor $L$ and $v_{C_i}$ denote the voltage across the capacitor $C_i$, for $i = 1,2,3,4$. Taking $x_1:= \operatorname{col}(v_{C_1},v_{C_2},v_{C_3},i_L)$, it follows from circuit theory that $M$ is governed by the dynamics
\begin{equation}\label{M_1}
\begin{aligned}
M: \begin{cases}
\dot{x}_1 = A_1x_1 + B_1u_1 + E_1d_1,\\
y_1 = C_1x_1,
\end{cases}
\end{aligned}
\end{equation}
where
\begin{equation*}
\begin{aligned}
A_1 &= \begin{bmatrix}
-\frac{1}{R_1C_1}&0&0&-\frac{1}{C_1}\\
0&-\frac{1}{R_3C_2}&\frac{1}{R_3C_2}&\frac{1}{C_2}\\
0&\frac{1}{R_3C_3}&-\frac{1}{R_3C_3}&0\\
\frac{1}{L}&-\frac{1}{L}&0&-\frac{R_2}{L}
\end{bmatrix}, \\
B_1 &= \begin{bmatrix}
\frac{1}{R_1C_1}\\0\\0\\0
\end{bmatrix},\;\;
C_1 = \begin{bmatrix}
0&1&0&0
\end{bmatrix}, \;\; E_1 = \begin{bmatrix}
0\\0\\\frac{1}{C_3}\\0
\end{bmatrix},
\end{aligned}
\end{equation*}
Here, $u_1$ and $d_1$ denote $V_S$ and $I_D$, respectively. In the same vein, taking $x_2 := v_{C_4}$, one obtains the dynamics of $M'$ as
\begin{equation}\label{M_2}
M': \begin{cases}
\dot{x}_{2} = A_{2}x_{2} + B_{2}u_{2} + E_{2}d_{2},\\
y_{2} = C_{2}x_{2},
\end{cases}
\end{equation}
with 
\begin{equation*}
\begin{aligned}
A_{2} &= -\frac{1}{C_4(R_4+R_5)}, \;\; B_{2} = \frac{1}{C_4(R_4+R_5)},\\
 E_{2} &= \frac{R_4}{C_4(R_4+R_5)},\quad\; C_{2} = 1. 
\end{aligned}
\end{equation*}
Once again, $u_2$ and $d_2$ represent $V_S$ and $I_D$, respectively. It is straightforward to see that the matrices $A_1$ and $A_2$ are Hurwitz. Moreover, the signals $u_1,u_2$ and $d_1,d_2$ are taken to be $\mathcal{L}_2$ signals.

We can now cast the problem into the framework of (\textgamma,\textdelta)-similarity according to Definition~\ref{Definition}. Because the voltages with which $M$ and $M'$ are supplied are subject to environmental noise, whose amplitude is almost negligible, the inputs $u_1$ and $u_{2}$ are almost the same. Due to the relatively small input dissimilarity, the parameter $\gamma$ does not have to be chosen very small. By contrast, $\delta$ has to be chosen very small, as $u_1$ and $u_2$ may have large amplitudes. For these reasons, we choose $\gamma=0.01$ and $\delta = 0.001$. Utilizing Theorem~\ref{GammaConformanceTheorem}, it turns out that $M' \preccurlyeq_{\gamma,\delta}M$ with the chosen values of $\gamma$ and $\delta$, which indicates that the input-output behavior of $M'$ is contained in that of $M$ to a large extent. This, as a result, implies that we may replace $M$ with $M'$ without significantly affecting the performance of the network, see Remark~\ref{Application}. Furthermore, in order to obtain \eqref{GammaConformanceInequality} with the smallest $\mu$, we minimized $\tilde{\mu}$ with respect to the constraint \eqref{FinalLMI}. The optimization problem has been solved in MATLAB R2015a using YALMIP \cite{Lofberg2004} and SDPT3 \cite{tutuncu2003solving}.

We compare the behavior of $M$ and $M'$ through numerical simulations, where  we take $u_1$, $u_{2}$, and $d_2$ as
\begin{equation*}
\begin{aligned}
u_1 &= \begin{cases}
20\sin(2\pi t), \;\;\; 0\leq t \leq 20,\\
0,
\end{cases}\\
u_{2} &= \begin{cases}
20.1\sin(2\pi t), \;\;\; 0\leq t \leq 20,\\
0,
\end{cases}\\
d_2 &= \frac{30}{0.2\sqrt{2\pi}}\exp\left(-\frac{(t-10)^2}{2(0.2)^2} \right).
\end{aligned}
\end{equation*} 
Clearly, while the components are fed over $[0,20]$, the component $M'$ experiences a surge in the current at $t = 10$. We run the simulation over $[0,50]$ which is large enough for the dynamics to reach their steady states. The output solutions as well as the output deviation are illustrated in Figure~\ref{OutputError}.
\begin{figure}[h]
	\centering
	\includegraphics[width=0.45\textwidth]{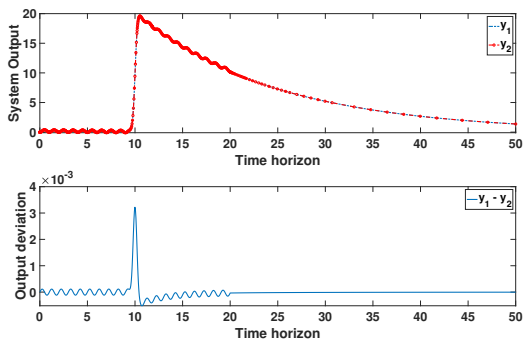}
	\caption{The output solutions of $M$ and $M'$ as well as the output deviation. Subject to a $d_2$ obtained from \eqref{FinalLMI}, see Figure~\ref{d2}, the system $M'$ reveals an output solution very similar to that of $M$. This indicates that for any solution trajectory of $M'$, there exists a trajectory of $M$ closely approximating it.}
	\label{OutputError}
\end{figure}
It is clear from Figure~\ref{OutputError} that when $d_1$ is chosen according to \eqref{FinalLMI}, which is depicted in Figure~\ref{d2}, the output solution of $M'$ is very similar to that of $M$. This, however, indicates the capability of $M$ in closely approximating the input-output behavior of $M'$. More precisely, for any solution trajectory of $M'$, there exists a trajectory of $M$ closely approximating it. This, as a result, indicates that one may replace $M$ with $M'$ without jeopardizing the performance of the network.
\begin{figure}[h]
	\centering
	\includegraphics[width=0.35\textwidth]{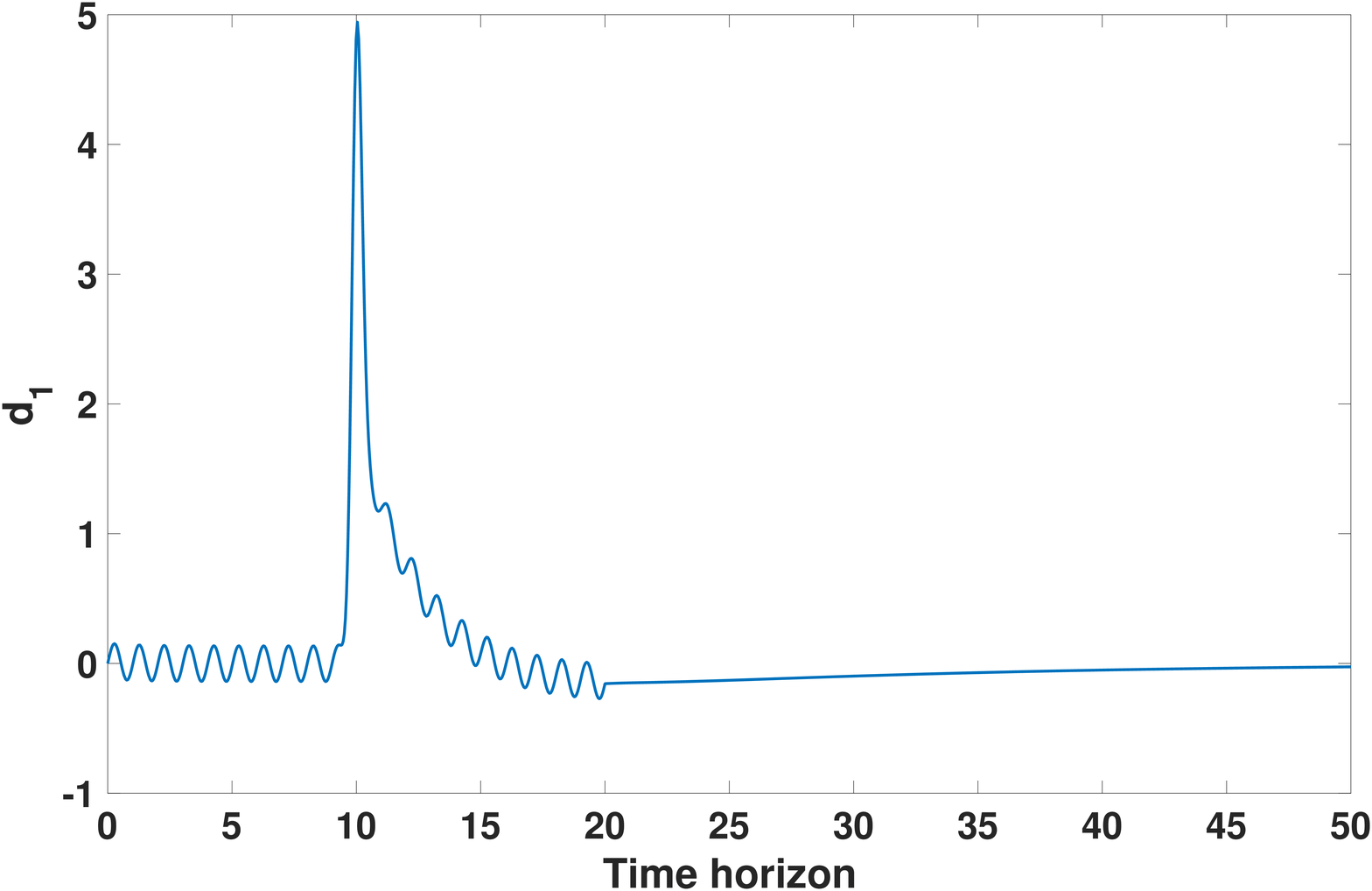}
	\caption{The disturbance $d_1$ subject to which the system $M$ reveals a solution trajectory that closely approximates the solution trajectory of $M'$.}
	\label{d2}
\end{figure}
\section{Conclusion}\label{ConclusionSection} 
This paper introduced (\textgamma,\textdelta)-similarity as a notion of system comparison, measuring to what extent the input-output behavior of two systems are similar. We formulated the behavioral similarity of two non-deterministic dynamical systems as the sensitivity of the difference between the output trajectories to the external inputs to the two systems, in the sense of $\mathcal{L}_2$ signal norms. We further characterized the notion of (\textgamma,\textdelta)-similarity as an LMI feasibility problem. We then studied (\textgamma,\textdelta)-similarity for deterministic systems and gave an interpretation of the notion in terms of transfer matrices. Subsequently, we  showed that the notion of (\textgamma,\textdelta)-similarity is preserved through series and feedback interconnections, which paves the way towards decomposing large-scale interconnected systems into interconnections of abstractions. 

For the future, our focus will be on utilizing this framework for control synthesis and compositional purposes such as abstraction and modular synthesis of interconnected systems. On the other hand, we aim to further extend this notion to nonlinear systems as well as systems whose inputs/disturbances are not necessarily $\mathcal{L}_2$ signals. 
\appendix
\subsection{Proof of Lemma~\ref{GammaConformanceFiniteTimeGammaConformance}}
Suppose \eqref{DissipativityRepresentation} holds for some $\varepsilon>0$. 
We will show that given any $T>0$, the DRE \eqref{DRE} admits a positive semi-definite solution over $[0,T]$, which by Lemma~\ref{GammaMuConformanceLemma}, implies that \eqref{GammaMuConformanceInequality} holds. To do so, we adopt the technique taken in the proof of Lemma~\ref{GammaMuConformanceLemma}. In particular, we show that given any $T>0$, \eqref{DRE} admits a solution over some interval $[T_1,T]$. Finally, we exploit the relation this solution has with $J_T$ to conclude the existence of the solution over $[0,T]$. 

Take any terminal time $T>0$; with the same reasoning as that of Lemma~\ref{GammaMuConformanceLemma}, there exists a $0\leq T_1\leq T$ such that \eqref{DRE} admits a solution $P$ over $[T_1,T]$.  
Taking an approach similar to that of \cite[Theorem 13.3]{trentelman2012control}, we observe 
\begin{equation*}
	\begin{aligned}
		&\mathscr{J}_T(0,x_s)\\
		&= \newsup_{w\in\mathcal{L}_2[0,T]}\inf_{d_2 \in \mathcal{L}_2[0,T]}  \left\{\norm{z}_{[0,T],R}^2 - \norm{w}_{[0,T],Q}^2 \right\}\\
		&\leq \newsup_{w\in\mathcal{L}_2[0,T]}\inf_{d_2 \in \mathcal{L}_2[0,T]} \Biggl( \norm{z}_{[0,T],R}^2 - \norm{w}_{[0,T],Q}^2\\
		&+\inf_{d_2 \in \mathcal{L}_2[T,\infty)}\biggl\{\int_{T}^{\infty}|z(t)|_{R}^2 \ \dt \left\vert \ \forall t> T: w(t)=0,\right.\\
		&\qquad\qquad\qquad \left.\lim\limits_{t\rightarrow\infty}x(t)=0 \biggr\}\right.\Biggr).
	\end{aligned}
\end{equation*}
As for the last inequality, it is noted that the term on the second line is bounded as a result of linear quadratic optimal control, \textit{e.g.,} \cite[Theorem 10.19]{trentelman2012control}. As a result, Choosing $d_2$ according to \eqref{DissipativityRepresentation}, one observes
	\begin{align}
		&\mathscr{J}_T(0,x_s)\nonumber\\
		&\leq \newsup_{w \in \mathcal{L}_2} \inf_{d_2 \in \mathcal{L}_2}\Bigl\{\norm{z}_{R}^2 - \norm{w}_Q^2 \left\vert \ \forall t > T:  w(t) = 0\Bigr\}\right.\nonumber\\
		&\leq\newsup_{w \in \mathcal{L}_2}\Bigl\{\norm{z(\,\cdot\,;0,x_s,d_2,w)}_{R}^2 - \norm{w}_Q^2 \left\vert  \forall t > T:   w(t) = 0\Bigr\}\right.\nonumber\\
		&\leq\newsup_{w \in \mathcal{L}_2}\Bigl\{\norm{z(\,\cdot\,;0,x_s,d_2,w)}_{R}^2 - \norm{w}_Q^2\Bigr\}.\label{BoundOnZeroInitialTime}
	\end{align}
On the other hand, the linearity of \eqref{CompositeSystem} implies that $z(t) = z(t;0,0,d_2,w) + z(t;0,x_s,0,0)$, by which \eqref{BoundOnZeroInitialTime} is written as
\begin{equation*}\label{Lemma1Aux1}
	\begin{aligned}
		&\mathscr{J}_T(0,x_s)\\
		&\leq\newsup_{w \in \mathcal{L}_2}\Bigl\{\norm{z(\,\cdot\,;0,0,d_2,w) + z(\,\cdot\,;0,x_s,0,0)}_{R}^2- \norm{w}_Q^2\}\\
		&\leq \newsup_{w \in \mathcal{L}_2}\Bigl\{\norm{z(\,\cdot\,;0,0,d_2,w)}_{R}^2 + \norm{z(\,\cdot\,;0,x_s,0,0)}_{R}^2- \norm{w}_Q^2\\
		&\qquad\qquad + 2\norm{z(\,\cdot\,;0,0,d_2,w)}_{R}\norm{ z(\,\cdot\,;0,x_s,0,0)}_{R}\Bigr\}.
	\end{aligned}
\end{equation*}
We now exploit \eqref{DissipativityRepresentation} to achieve
\begin{equation}\label{Lemma1Aux2}
	\begin{aligned}
		&\mathscr{J}_T(0,x_s)\\
		&\leq \newsup_{w \in \mathcal{L}_2}\Bigl\{-\varepsilon \norm{w}^2 + \norm{z(\,\cdot\,;0,x_s,0,0)}_{R}^2\\
		&\qquad + 2\norm{z(\,\cdot\,;0,0,d_2,w)}_{R}\norm{ z(\,\cdot\,;0,x_s,0,0)}_{R} \Bigr\}.
	\end{aligned}
\end{equation}
Once again, it follows from \eqref{DissipativityRepresentation} that
\begin{equation*}
	\norm{z(\,\cdot\,;0,0,d_2,w)}_{R} \leq (\lambda_{\text{max}}(Q)-\varepsilon)^{\frac{1}{2}}\norm{w},
\end{equation*}
which simplifies \eqref{Lemma1Aux2} as
\begin{equation}\label{Lemma1Aux3}
	\begin{aligned}
		&\mathscr{J}_T(0,x_s)\\
		&\leq\newsup_{w \in \mathcal{L}_2}\left\{ -\varepsilon\norm{w}^2 + \norm{z(\,\cdot\,;0,x_s,0,0)}_{R}^2\right.\\
		&\qquad\quad\left.+ 2(\lambda_{\text{max}}(Q)-\varepsilon)^{\frac{1}{2}}\norm{w}\norm{z(\,\cdot\,;0,x_s,0,0)}_{R}\right\}.
	\end{aligned}
\end{equation}
Since $A$ is Hurwitz, $z(\,\cdot\,;0,x_s,0,0) \in \mathcal{L}_2$ for all $x_s$ \cite[Theorem 3.1.1]{green2012linear}, implying that the last term, or equivalently, $\mathscr{J}_T(0,x_s)$ is bounded. As a result of \eqref{DRECostFunctionRelation}, this implies that the solution $P(\tau)$ is uniformly upper bounded over $[T_1,T]$. Once again, \eqref{DRECostFunctionRelation} and \eqref{CostFunctionMonotonicity} imply that for any $\tau \in [T_1,T]$, $P(\tau) \succcurlyeq 0$. Consequently, $P(\tau)$ is uniformly bounded over the interval $[T_1,T]$, which, by Theorem~10.7 of \cite{trentelman2012control}, implies that the solution $P$ exists over the whole interval $[0,T]$. 
\subsection{Proof of Theorem~\ref{GammaConformanceLemma}}
\textit{If part}: Suppose there exist constants $\eta,\mu>0$ such that, for matrices $Q$ and $R$ as in \eqref{QR}, such that \eqref{ARE} admits a positive semi-definite solution satisfying \eqref{Stabilizing}. As we would like to show that \eqref{DissipativityRepresentation} is satisfied, let $w\in\mathcal{L}_2$ be arbitrary. As a first step, we claim that there exists $d_2^* \in \mathcal{L}_2$ such that $x(\cdot) = x(\,\cdot\,;0,0,d_2^*,w)$ satisfies 
\begin{equation}\label{TheoremAux1}
\lim\limits_{t\rightarrow\infty}x(t) = 0.
\end{equation}
This, in turn, implies that 
\begin{equation}\label{TheoremAux2}
\int_{0}^{\infty}\frac{\dif x^T(t)Px(t)}{\dt} \ \dt=\lim\limits_{t\rightarrow\infty}x^T(t)Px(t) = 0,
\end{equation}
which will turn out to be useful later. Choose 
\begin{equation}\label{TheoremAux3}
d_2^* = Fx, \qquad F = -(D^TRD)^{-1}B^TP,
\end{equation}
and consider \eqref{CompositeSystem} subject to $d_2 = d_2^*$. If $A+BF$ is Hurwitz, it follows from \cite[Lemma 4.8]{brogliato2007dissipative} that \eqref{TheoremAux1} holds \emph{for any} $w \in \mathcal{L}_2$. We therefore proceed to show that $A+BF$ is Hurwitz. Given the fact that $D^TRC = 0$, \eqref{ARE} can be written as
\begin{equation}\label{AltARE}
\begin{aligned}
0=&\;(A+BF)^TP+P(A+BF) \\ 
&+(C+DF)^TR(C+DF)+ PEQ^{-1}E^TP.
\end{aligned}
\end{equation}
Let $v$ be an eigenvector of $A+BF$ with the eigenvalue $\lambda$. Multiplying \eqref{AltARE} by $v^*$ and $v$ from the left and right, respectively, we achieve
\begin{equation}\label{GammaConformanceAux5}
2\mathrm{Re}\{\lambda \}v^*Pv = -\left\vert E^TPv\right\vert_{Q^{-1}}^2 - \left\vert\left(C+DF\right)v\right\vert_R^2 \leq 0,
\end{equation} 
which suggests that either $Pv = 0$ or $\mathrm{Re}\{\lambda \}\leq0$. If the former holds, then
$$(A+BF)v = Av = \lambda v,$$
indicating $\mathrm{Re}\{\lambda \}<0$. Now, consider the case $\mathrm{Re}\{\lambda \}\leq 0$. Suppose $\mathrm{Re}\{\lambda \}=0$, which implies $E^TPv = 0$, and therefore,
\begin{equation*}
(A+BF)v = (A - B(D^TRD)^{-1}B^TP + EQ^{-1}E^TP)v = \lambda v,
\end{equation*}
which, by \eqref{Stabilizing}, implies $\mathrm{Re}\{\lambda \} < 0$, contradicting the initial assumption. Consequently, the matrix $A + BF$ is Hurwitz.

Now that we have shown that $A+BF$ is Hurwitz, we conclude that subject to $d_2^*$, for any $w\in\mathcal{L}_2$, \eqref{TheoremAux2} holds, which also implies that $d_2^*\in\mathcal{L}_2$. As a result, similar to the proof of Lemma \ref{GammaMuConformanceLemma}, we can write  
\begin{equation}\label{GammaConformanceAux3}
\begin{aligned}
&\lim\limits_{T\rightarrow\infty}J_T(0,0,d_2^*,w)\\
&= -\int_{0}^{\infty}\left\vert w(t)-Q^{-1}E^TP(t)x(t)\right\vert_Q^2\dt \leq 0.
\end{aligned}
\end{equation} 
As a result, it remains to show that there exists an $\varepsilon>0$ such that \eqref{DissipativityRepresentation} holds. For this purpose, define
\begin{equation}\label{VirtualInput2}
v := Q^{\frac{1}{2}}w - Q^{-\frac{1}{2}}E^TPx,
\end{equation}
and consider the dynamics of the composite system subject to $d_2 = d_2^*$. Similar to Lemma~\ref{GammaMuConformanceLemma}, after expressing $w$ in terms of $v$ using \eqref{VirtualInput2}, we obtain the system
\begin{align}
\dot{x} &= (A -B(D^TRD)^{-1}B^TP + EQ^{-1}E^TP)x + EQ^{-\frac{1}{2}}v,\nonumber\\
w&=Q^{-\frac{1}{2}}v + Q^{-1}E^TPx.\label{GammaConformanceAux4}
\end{align}
As a result of \eqref{Stabilizing}, the system \eqref{GammaConformanceAux4} is stable, implying that there exists an $\varepsilon>0$ such that $\norm{w} \leq \varepsilon^{-\frac{1}{2}}\norm{v}$, for trajectories corresponding to zero initial conditions.  This, however, leads to
\begin{equation}\label{TheoremAux5}
\lim\limits_{T\rightarrow\infty}J_T(0,0,d_2^*,w) = -\norm{v}^2 \leq -\varepsilon\norm{w}^2,
\end{equation}
for any $w \in \mathcal{L}_2$, which indicates that \eqref{DissipativityRepresentation} holds.
\\
\\
\textit{Only if} part: Suppose there exist a constant $\varepsilon$ and positive definite matrices $Q$ and $R$, as in \eqref{QR}, such that \eqref{DissipativityRepresentation} holds. 

Since \eqref{DissipativityRepresentation} holds, Lemma~\ref{GammaConformanceFiniteTimeGammaConformance} implies that \eqref{GammaMuConformanceInequality} holds for any $T>0$. Subsequently, Lemma~\ref{GammaMuConformanceLemma} guarantees that the DRE \eqref{DRE} admits a positive semi-definite solution over $[0,T]$. We denote this solution (at time $t\in[0,T]$) with $\bar{P}(t,T,0)$, emphasizing the fact that the solution exists over $[0,T]$ with a zero terminal condition. As $\bar{P}(t,T,0)$ exists for any $T>0$, the DRE \eqref{DRE} admits a positive semi-definite solution over $[0,\infty)$. 

We show that $\bar{P}(t,T,0)$ converges to a constant limit satisfying \eqref{ARE}, as $T$ grows to infinity. For this purpose, we initially recall two main features of $\bar{P}(t,T,0)$, namely uniform boundedness and monotonicity. To show the former, we recall that given any $T > 0$, \eqref{DRECostFunctionRelation} holds
for any $\tau \in [0,T]$. Moreover, as shown in Lemma~\ref{GammaConformanceFiniteTimeGammaConformance}, \eqref{DissipativityRepresentation} implies that $\mathscr{J}_T(0,x_s)$ is bounded, which, along with \eqref{CostFunctionMonotonicity}, implies that for any given $T > 0$, the solution $\bar{P}(t,T,0)$ is uniformly bounded over $[0,T]$. More importantly, the lower and upper bounds of $\bar{P}(t,T,0)$ are \emph{independent} from $T$. It directly follows from \eqref{CostFunctionMonotonicity} that $\bar{P}(t,T,0)$ is monotonically non-increasing as a function of $t$. However, since the system is time-invariant, $\bar{P}(t,T,0) = \bar{P}(\tau,T-t+\tau,0)$, implying that $\bar{P}(t,T,0)$ is a monotonically non-decreasing function of $T$ (see also \cite[Lemma 6.3.2]{green2012linear}). Now, because $\bar{P}(t,T,0)$ is uniformly bounded and monotonically \emph{non-decreasing} with respect to $T$, it has a limit $P_t$ as $T$ grows to infinity. However, as a result of time-invariance, one concludes that for all $t$, 
\begin{equation*}
P_t = \lim\limits_{T\rightarrow \infty}\bar{P}(t,T,0) = \lim\limits_{T\rightarrow\infty}\bar{P}(0,T-t,0) = \lim\limits_{T\rightarrow\infty}\bar{P}(0,T,0),
\end{equation*}
which acknowledges that $P_t$ is constant \cite[Lemma 6.3.3]{green2012linear}. For this reason, we drop the subscript from now on.  Obviously, $P$ is positive semi-definite. To show that $P$ satisfies \eqref{ARE}, we take the approach of \cite[Lemma 6.3.3]{green2012linear}. As the system is time invariant, $\bar{P}(t,T,0)$ only depends on $T-t$. Therefore, for any $T^* \in [0,\infty)$, 
\begin{equation*}
\begin{aligned}
\forall t \leq T^*:P &= \lim\limits_{T\rightarrow \infty}\bar{P}(t,T,0)\\ &= \lim\limits_{T\rightarrow\infty}\bar{P}\Bigl(t,T^*,\bar{P}(T^*,T,0)\Bigr)\\ &= \bar{P}\Bigl(t,T^*,\lim\limits_{T\rightarrow\infty}\bar{P}(T^*,T,0)\Bigr)\\ &= \bar{P}(t,T^*,P),
\end{aligned}
\end{equation*}
by which one concludes that for any $T^*\in[0,\infty)$, $P$ satisfies the DRE \eqref{DRE} over $[0,T^*]$ with a terminal condition equal to $P$. As $P$ is constant, this implies that $P$ satisfies the ARE \eqref{ARE}.

Now, we show that $P$ satisfies \eqref{Stabilizing}. As an intermediate step, we show that $A - B(D^TRD)^{-1}B^TP$ is Hurwitz. Consider the matrix $F$ defined as in \eqref{TheoremAux3} and rewrite the ARE \eqref{ARE} as \eqref{AltARE}. Let $v$ be an eigenvector of $A+BF$ with eigenvalue $\lambda$. We respectively multiply \eqref{AltARE} with $v^*$ and $v$ from the left and right to achieve \eqref{GammaConformanceAux5}.
Clearly, either $Pv = 0$ or $\mathrm{Re}\{\lambda \}\leq0$. For the former case, we conclude that
$\mathrm{Re}\{\lambda\}<0$ as before. Hence, consider the case $\mathrm{Re}\{\lambda \}\leq0$. Clearly, if $\mathrm{Re}\{\lambda \}=0$, then
\begin{equation}\label{GammaConformanceAux2}
(A+BF)v = \lambda v, \;\; (C+DF)v=0.
\end{equation}
Nonetheless, as $D$ is injective, \eqref{GammaConformanceAux2} implies that $\lambda$ is a zero of the system $(A,B,C,D)$ \cite[Exercise 13.3]{trentelman2012control}, or equivalently, $\lambda$ is an unobservable eigenvalue of the pair \cite[Exercise 10.1]{trentelman2012control}
\begin{equation*}
\left(A -B\left(DD^T\right)^{-1}D^TC, C - D\left(DD^T\right)^{-1}D^TC\right),
\end{equation*}
which, because $D^TC = 0$, means that $\lambda$ is an unobservable eigenvalue of $(A,C)$. However, as $A$ is Hurwitz, the pair $(A,C)$ is detectable, and therefore, $\mathrm{Re}\{\lambda \}< 0$ contradicting the assumption that $\lambda$ lies on the imaginary axis. Consequently, the matrix $A + BF$ is Hurwitz. 

We now proceed to the next step, \textit{i.e.,} we show that \eqref{Stabilizing} holds. For this purpose, take the disturbance $w$ generated by the autonomous system 
\begin{equation}\label{Aux2}
\begin{aligned}
\dot{\xi}(t) &= \Bigl(A - B(D^TRD)^{-1}B^TP + EQ^{-1}E^TP\Bigr)\xi(t),\\
w(t) &= Q^{-1}E^TP\xi(t),
\end{aligned}
\end{equation}
with $\xi(0) = x_0$. It is not difficult to notice that the dynamics of \eqref{Aux2} can also be written as
\begin{equation}\label{TheoremAux4}
\dot{\xi}(t) = (A+BF)\xi + Ew.
\end{equation}
We have shown earlier that $A+BF$ is Hurwitz. If we show that $w \in \mathcal{L}_2$, then we may conclude that $\lim\limits_{t\rightarrow\infty}\xi(t) =0$, for any $\xi(0)$. This, however, would imply the asymptotic stability of \eqref{Aux2}, which, in turn, implies \eqref{Stabilizing}. We therefore proceed to show that $w \in \mathcal{L}_2$. With the aim of doing so, consider \eqref{CompositeSystem} subject to the input $d_2 = Fx$ and the disturbance $w = w_T(t) := Q^{-1}E^T\bar{P}(t,T,0)x(t)$. Corresponding to $w_T$, we also define the signal
\begin{equation*}
\bar{w}_{T}(t) := \begin{cases}
w_T(t), \qquad t\leq T,\\
0, \qquad\qquad t>T.
\end{cases}
\end{equation*}
Accordingly, one finds
\begin{equation*}
\begin{aligned}
&J_T(0,x_0,Fx, w_{T})\\
&= \norm{z(\,\cdot\,;0,x_0,Fx,w_T)}_{[0,T],R}^2 - \norm{w_T}_{[0,T],Q}^2\\
&= \norm{z(\,\cdot\,;0,x_0,Fx,\bar{w}_T)}_{[0,T],R}^2 - \norm{\bar{w}_T}_{[0,T],Q}^2\\
&\leq \norm{z(\,\cdot\,;0,x_0,Fx,\bar{w}_T)}_{[0,T],R}^2 - \norm{\bar{w}_T}_{[0,T],Q}^2\\
&\quad+\int_{T}^{\infty}|z(t;0,x_0,Fx,\bar{w}_T)|_R^2 \ \dt.
\end{aligned}
\end{equation*}
Note that because $A+BF$ is Hurwitz, the indefinite integral is well-defined. It now follows from $\norm{\bar{w}_T}_Q = \norm{\bar{w}_T}_{[0,T],Q}$ that
\begin{equation*}
\begin{aligned}
&J_T(0,x_0,Fx, w_{T})\\
& \leq \norm{z(\,\cdot\,;0,x_0,Fx,\bar{w}_T)}_{R}^2 - \norm{\bar{w}_T}_{Q}^2\\
&\leq \norm{z(\,\cdot\,;0,0,Fx,\bar{w}_T) + z(\,\cdot\,;0,x_0,0,0)}_{R}^2 - \norm{\bar{w}_T}_{Q}^2\\
&\leq \norm{z(\,\cdot\,;0,0,Fx,\bar{w}_T)}_{R}^2 + \norm{z(\,\cdot\,;0,x_0,0,0)}_{R}^2 \\&\quad+2\norm{z(\,\cdot\,;0,0,Fx,\bar{w}_T)}_{R}\norm{z(\,\cdot\,;0,x_0,0,0)}_{R}- \norm{\bar{w}_T}_{Q}^2.
\end{aligned}
\end{equation*}
Now, a similar reasoning as in obtaining \eqref{Lemma1Aux2} and \eqref{Lemma1Aux3} gives
\begin{equation*}
\begin{aligned}
&J_T(0,x_0,Fx,w_T)\\
&\leq -\varepsilon\norm{\bar{w}_T}^2 + \norm{z(\,\cdot\,;0,x_0,0,0)}_{R}^2 \\
&\;\;\;+2(\lambda_{\text{max}}(Q)-\varepsilon)^{\frac{1}{2}}\norm{\bar{w}_T}\norm{z(\,\cdot\,;0,x_0,0,0)}_R.
\end{aligned}
\end{equation*}
However, we recall that $\norm{\bar{w}_T} = \norm{\bar{w}_T}_{[0,T]} = \norm{w_T}_{[0,T]}$. Therefore, we conclude
\begin{equation*}
\begin{aligned}
J_T(0,x_0,Fx,w_T)\leq-\varepsilon(\norm{w_T}_{[0,T]}-c_1)^2 + c_2
\end{aligned}
\end{equation*}
where the constants
\begin{equation*}
\begin{aligned}
c_1 &= \frac{(\lambda_{\text{max}}(Q)-\varepsilon)^{1/2}}{\varepsilon}\norm{z(\,\cdot\,;0,x_0,0,0)}, \\
c_2 &= \frac{\lambda_{\text{max}}(Q)}{\varepsilon}\norm{z(\,\cdot\,;0,x_0,0,0)}^2,
\end{aligned}
\end{equation*}
are independent of $T$. On the other hand, it follows from \eqref{DRECostFunctionRelation} that
\begin{equation*}
J_T(0,x_0,Fx,w_{T}) \geq x_0^T\bar{P}(0,T,0)x_0,
\end{equation*}
which, along with the fact that $\bar{P}(0,T,0) \succcurlyeq 0$, implies that $\norm{w_{T}}_{[0,T]}$ is uniformly upper bounded with respect to $T$. Obviously, for all $t^* \leq T$, one finds $\norm{w_{T}}_{[0,t^*]}$ uniformly upper bounded (with respect to $t^*$ and $T$) as well. Since $$\lim\limits_{T\rightarrow\infty}\bar{P}(t,T,0) = P,$$ one concludes $\lim\limits_{T \rightarrow \infty}w_{T}(t) = w(t)$ for all $t$, implying that for all $t^*$,
$$\norm{w}_{[0,t^*]} = \lim\limits_{T \rightarrow \infty }\norm{w_{T}}_{[0,t^*]},$$
is uniformly upper bounded with respect to $t^*$ and eventually guaranteeing $w^* \in \mathcal{L}_2$. As pointed out earlier, this completes the proof. More specifically, because $A +BF$ is Hurwitz and $Ew^* \in \mathcal{L}_2$, one has  $\lim\limits_{t\rightarrow\infty}\xi(t) = 0,$ for all $\xi(0)$. This implies that \eqref{TheoremAux4} is $0$-asymptotically stable, or equivalently \eqref{Aux2}, is asymptotically stable and therefore, \eqref{Stabilizing} is satisfied. 
\subsection{Proof of Proposition~\ref{FiniteGainProperty}}
As a direct consequence of $0$-asymptotic stability, $\bm{\Sigma}_1$ has finite $\mathcal{L}_2$ gain, see, \textit{e.g.,} \cite[Section 4.1]{desoer2009feedback} and \cite[Lemma 4.8]{brogliato2007dissipative}. Hence, there exist $\ell_1,k_1>0$ such that for all $u_1,d_1\in\mathcal{L}_2$,
\begin{equation}\label{PropositionFiniteGainAux1}
\norm{y_1}^2 \leq \ell_1\norm{u_1}^2 + k_1\norm{d_1}^2 \leq \ell_1\norm{\begin{bmatrix}
	u_1\\u_2
	\end{bmatrix}}^2 + k_1\norm{d_1}^2.
\end{equation}
Needless to say, it follows from Proposition~\ref{IndividualDetereminism} that $\ell_1 < \gamma+\delta$. On the other hand, $\bm{\Sigma}_1 \preccurlyeq_{\gamma,\delta} \bm{\Sigma}_2$ implies the existence of $\varepsilon,\eta,\mu>0$ and $d_2\in\mathcal{L}_2$ such that \eqref{GammaConformanceInequality} holds. Choosing $d_2$ according to \eqref{GammaConformanceInequality}, we add \eqref{PropositionFiniteGainAux1} to \eqref{GammaConformanceInequality} and utilize the triangular and the Cauchy-Schwartz inequalities to obtain
\begin{equation*}
\begin{aligned}
\norm{y_2}^2 \leq&\; 2\gamma\norm{u_1-u_2}^2+ 2(\ell_1+\delta)\norm{\begin{bmatrix}
	u_1\\u_2
	\end{bmatrix}}^2\\
& + 2(\mu+k_1)\norm{d_1}^2\\
=& \norm{\begin{bmatrix}
	u_1\\u_2
	\end{bmatrix}}_{\hat{Q}}^2 + 2(\mu+k_1)\norm{d_1}^2,
\end{aligned}
\end{equation*}
where
\begin{equation*}
\hat{Q} = \begin{bmatrix}
2(\gamma+\delta+\ell_1)I&-2\gamma I\\
-2\gamma I&2(\gamma+\delta+\ell_1) I
\end{bmatrix}.
\end{equation*}
Defining $\ell_2 := \lambda_{\text{max}}(\hat{Q})$ and $k_2 := 2(\mu+k_1)$, this yields
\begin{equation}\label{PropositionFiniteGainAux2}
\norm{y_2}^2 \leq \ell_2\norm{\begin{bmatrix}
	u_1\\u_2
	\end{bmatrix}}^2 + k_2\norm{d_1}^2.
\end{equation}
It is now clear that for the choice of $\ell = \ell_1+\ell_2$ and $k=k_1+k_2$, \eqref{FiniteGain} follows from \eqref{PropositionFiniteGainAux1} and \eqref{PropositionFiniteGainAux2}.
\subsection{Proof of Proposition~\ref{SeriesGammaConformance}}
Let $u_1,u_1'\in\mathcal{L}_2$ and $d_1,d_2\in\mathcal{L}_2$ be given. It follows from  $\bm{\Sigma}_1 \preccurlyeq_{\gamma_{1},\delta_{1}} \bm{\Sigma}_{1}'$ that there exist $\varepsilon_{1},\eta_{1},\mu_{1}>0$ and $d_1'\in\mathcal{L}_2$ such that 
\begin{equation}\label{PropositionSeriesAux1}
\begin{aligned}
\norm{y_1-y_1'}^2 \leq&\; \gamma_{1}\norm{u_1-u_1'}^2 + (\delta_{1}-\varepsilon_{1})\norm{\begin{bmatrix}
	u_1\\u_1'
	\end{bmatrix}}^2\\
& + (\mu_{1}-\varepsilon_{1})\norm{d_1}^2 - \eta_{1}\norm{d_1'}^2.
\end{aligned}
\end{equation}
Choosing $u_2 = y_1$ and $u_2' = y_1'$, $\bm{\Sigma}_2 \preccurlyeq_{\gamma_{2},\delta_{2}} \bm{\Sigma}_{2}'$ implies the existence of 
$\varepsilon_{2},\eta_{2},\mu_{2}>0$ and $d_2'\in\mathcal{L}_2$ such that 
\begin{equation}\label{PropositionSeriesAux2}
\begin{aligned}
\norm{y_2-y_2'}^2 \leq&\; \gamma_{2}\norm{y_1-y_1'}^2 + (\delta_{2}-\varepsilon_{2})\norm{\begin{bmatrix}
	y_1\\y_1'
	\end{bmatrix}}^2\\
& + (\mu_{2}-\varepsilon_{2})\norm{d_2}^2 - \eta_{2}\norm{d_2'}^2.
\end{aligned}.
\end{equation}
Choosing $d_1'$ according to \eqref{PropositionSeriesAux1}, it follows from Proposition~\ref{FiniteGainProperty} that there exist $\ell_{1},k_{1}>0$ such that
\begin{equation*}
\norm{\begin{bmatrix}
	y_1\\y_1'
	\end{bmatrix}}^2\leq \ell_{1}\norm{\begin{bmatrix}
	u_1\\u_1'
	\end{bmatrix}}^2 + k_{1}\norm{d_1}^2.
\end{equation*}
It now follows from \eqref{PropositionSeriesAux1} and \eqref{PropositionSeriesAux2} that
\begin{equation*}
\begin{aligned}
\norm{y_2-y_2'}^2 \leq&\;\gamma_{1}\gamma_{2}\norm{u_1-u_1'}^2\\
&+\Bigl(\gamma_{2}(\delta_{1}-\varepsilon_{1}) + \ell_{1}(\delta_{2}-\varepsilon_{2}) \Bigr)\norm{\begin{bmatrix}
	u_1\\u_1'
	\end{bmatrix}}^2\\
&+ \Bigl(\gamma_{2}(\mu_{1}-\varepsilon_{1})+k_{1}(\delta_{2}-\varepsilon_{2}) \Bigr)\norm{d_1}^2\\
&+ (\mu_{2}-\varepsilon_{2})\norm{d_2}^2- \gamma_{2}\eta_{1}\norm{d_1'}^2 - \eta_{2}\norm{d_2'}^2,
\end{aligned}
\end{equation*}
which, by Definition~\ref{Definition}, guarantees that $\bm{\Sigma}_1 \parallel_{\text{s}} \bm{\Sigma}_2 \preccurlyeq_{\gamma_s,\delta_s} \bm{\Sigma}_{1}' \parallel_{\text{s}} \bm{\Sigma}_{2}'$ with
\begin{equation*}
\begin{aligned}
\gamma_s = \gamma_{1}\gamma_{2}, \;\; \delta_s = \gamma_{2}\delta_{1} + \delta_{2}\ell_{1},
\end{aligned}
\end{equation*}
which completes the proof.
\subsection{Proof of Proposition~\ref{FeedbackGammaConformance}}
Let $e_1,e_1',e_2,e_2'\in\mathcal{L}_2$ and $d_1,d_2\in\mathcal{L}_2$ be given. Choosing $u_1 = e_1 - y_2$ and $u_1' = e_1' - y_2'$, it follows from  $\bm{\Sigma}_1 \preccurlyeq_{\gamma_{1},\delta_{1}} \bm{\Sigma}_{1}'$ that there exist $\varepsilon_{1},\eta_{1},\mu_{1}>0$ and $d_1'\in\mathcal{L}_2$ such that 
\begin{equation*}
\begin{aligned}
\norm{y_1-y_1'}^2 \leq&\; \gamma_{1}\norm{e_1-y_2-e_1'+y_2'}^2\\
& + (\delta_{1}-\varepsilon_{1})\norm{\begin{bmatrix}
	e_1-y_2\\e_1'-y_2'
	\end{bmatrix}}^2\\
& + (\mu_{1}-\varepsilon_{1})\norm{d_1}^2 - \eta_{1}\norm{d_1'}^2.
\end{aligned}
\end{equation*}
Using the triangular and Cauchy-Schwartz inequalities, this leads to
\begin{equation}\label{PropositionFeedbackAux1}
\begin{aligned}
\norm{y_1-y_1'}^2 \leq&\; \frac{\tilde{\epsilon}}{\epsilon}\gamma_{1}\norm{e_1-e_1'}^2 + 2(\delta_{1}-\varepsilon_{1})\norm{\begin{bmatrix}
		e_1\\e_1'
\end{bmatrix}}^2 \\
&+ \tilde{\epsilon}\gamma_{1}\norm{y_2-y_2'}^2  + 2(\delta_{1}-\varepsilon_{1})\norm{\begin{bmatrix}
	y_2\\y_2'
	\end{bmatrix}}^2\\
& + (\mu_{1}-\varepsilon_{1})\norm{d_1}^2 - \eta_{1}\norm{d_1'}^2,
\end{aligned}
\end{equation}
where $\tilde{\epsilon} = 1+\epsilon$. Completely analogously, we can use $\bm{\Sigma}_2 \preccurlyeq_{\gamma_2,\delta_2} \bm{\Sigma}_2'$ to conclude that there exists $d_2'\in\mathcal{L}_2$ such that we can bound $\norm{y_2-y_2'}^2$ in a similar way as \eqref{PropositionFeedbackAux1}. Substitution of this bound in \eqref{PropositionFeedbackAux1} and rearranging terms (hereby using \eqref{SmallGain}) leads to
	\begingroup
	\allowdisplaybreaks
			\begin{align}
				&\norm{y_1-y_1'}^2 \nonumber\\
				&\leq\frac{\tilde{\epsilon}\gamma_{1}}{\epsilon	(1-\tilde{\epsilon}^2\gamma_{1}\gamma_{2})}\norm{e_1-e_1}^2+\frac{\tilde{\epsilon}^2\gamma_{1}\gamma_{2}}{\epsilon(1-\tilde{\epsilon}^2\gamma_{1}\gamma_{2})}\norm{e_2-e_2'}^2\nonumber\\
				&+\frac{2(\delta_{1}-\varepsilon_{1})}{1-\tilde{\epsilon}^2\gamma_{1}\gamma_{2}}\norm{\begin{bmatrix}
						e_1\\e_1'
				\end{bmatrix}}^2 + \frac{2\tilde{\epsilon}\gamma_{1}(\delta_{2}-\varepsilon_{2})}{1-\tilde{\epsilon}^2\gamma_{1}\gamma_{2}}\norm{\begin{bmatrix}
				e_2\\e_2'
			\end{bmatrix}}^2\nonumber\\&+\frac{2\tilde{\epsilon}\gamma_{1}(\delta_{2}-\varepsilon_{2})}{1-\tilde{\epsilon}^2\gamma_{1}\gamma_{2}}\norm{\begin{bmatrix}
						y_1\\y_1'
				\end{bmatrix}}^2++\frac{2(\delta_{1}-\varepsilon_{1})}{1-\tilde{\epsilon}^2\gamma_{1}\gamma_{2}}\norm{\begin{bmatrix}
				y_2\\y_2'
			\end{bmatrix}}^2\nonumber\\ & +\frac{(\mu_{1}-\varepsilon_{1})}{1-\tilde{\epsilon}^2\gamma_{1}\gamma_{2}}\norm{d_1}^2 + \frac{\tilde{\epsilon}\gamma_{1}(\mu_{2}-\varepsilon_{2})}{1-\tilde{\epsilon}^2\gamma_{1}\gamma_{2}}\norm{d_2}^2 \nonumber\\
				& -\frac{\eta_{1}}{1-\tilde{\epsilon}^2\gamma_{1}\gamma_{2}}\norm{d_1'}^2- \frac{\tilde{\epsilon}\gamma_{1}\eta_{2}}{1-\tilde{\epsilon}^2\gamma_{1}\gamma_{2}}\norm{d_2'}^2.\label{PropositionFeedbackAux3}
			\end{align}
		\endgroup
On the other hand, choosing $d_1'$ according to \eqref{PropositionFeedbackAux1}, one exploits Proposition~\ref{FiniteGainProperty} to obtain
\begingroup
\allowdisplaybreaks
\begin{align*}
\norm{\begin{bmatrix}
	y_1\\y_1'
	\end{bmatrix}}^2 \leq&\;\ell_{1}\norm{
	\begin{bmatrix}
	e_1-y_2\\e_1'-y_2'
	\end{bmatrix}}^2 + k_{1}\norm{d_1}^2\\
\leq&\; \frac{\tilde{\epsilon}}{\epsilon}\ell_{1}\norm{\begin{bmatrix}
	e_1\\e_1'
	\end{bmatrix}}^2 + \tilde{\epsilon}\ell_{1}\norm{\begin{bmatrix}
	y_2\\y_2'
	\end{bmatrix}}^2 + k_{1}\norm{d_1}^2\\
\leq&\; \frac{\tilde{\epsilon}}{\epsilon}\ell_{1}\norm{\begin{bmatrix}
	e_1\\e_1'
	\end{bmatrix}}^2 + \tilde{\epsilon}\ell_{1}\ell_{2}\norm{\begin{bmatrix}
	e_2+y_1\\e_2'+y_1'
	\end{bmatrix}}^2\\
&+ \tilde{\epsilon}\ell_{1}k_{2}\norm{d_2}^2 + k_{1}\norm{d_1}^2\\
\leq&\; \frac{\tilde{\epsilon}}{\epsilon}\ell_{1}\norm{\begin{bmatrix}
	e_1\\e_1'
	\end{bmatrix}}^2 + \frac{\tilde{\epsilon}^2}{\epsilon}\ell_{1}\ell_{2}\norm{\begin{bmatrix}
	e_2\\e_2'
	\end{bmatrix}}^2\\
&+ \tilde{\epsilon}^2\ell_{1}\ell_{2}\norm{\begin{bmatrix}
	y_1\\y_2
	\end{bmatrix}}^2+ \tilde{\epsilon}\ell_{1}k_{2}\norm{d_2}^2 + k_{1}\norm{d_1}^2.
\end{align*}
\endgroup
However, given \eqref{SmallGain}, it is easily seen that
\begingroup
\allowdisplaybreaks
\begin{align}
\norm{\begin{bmatrix}
	y_1\\y_1'
	\end{bmatrix}}^2 \leq&\; \frac{\frac{\tilde{\epsilon}}{\epsilon}\ell_{1}}{1-\tilde{\epsilon}^2\ell_{1}\ell_{2}}\norm{\begin{bmatrix}
	e_1\\e_1'
	\end{bmatrix}}^2 + \frac{\frac{\tilde{\epsilon}^2}{\epsilon}\ell_{1}\ell_{2}}{1-\tilde{\epsilon}^2\ell_{1}\ell_{2}}\norm{\begin{bmatrix}
	e_2\\e_2'
	\end{bmatrix}}^2\nonumber\\
& + \frac{k_{1}}{1-\tilde{\epsilon}^2\ell_{1}\ell_{2}}\norm{d_1}^2 + \frac{\tilde{\epsilon}\ell_{1}k_{2}}{1-\tilde{\epsilon}^2\ell_{1}\ell_{2}}\norm{d_2}^2, \label{PropositionFeedbackAux4}
\end{align}
\endgroup
whereas a similar bound can be found on $\operatorname{col}(y_2,y_2')$. Substitution of those bounds in \eqref{PropositionFeedbackAux3} leads to
\begingroup
\allowdisplaybreaks
\begin{align}
\norm{y_1-y_1'}^2 \leq&\; \frac{\frac{\tilde{\epsilon}}{\epsilon}\gamma_{1}}{1-\tilde{\epsilon}^2\gamma_{1}\gamma_{2}}\norm{e_1-e_1'}^2+\frac{\frac{\tilde{\epsilon}^2}{\epsilon}\gamma_{1}\gamma_{2}}{1-\tilde{\epsilon}^2\gamma_{1}\gamma_{2}}\norm{e_2-e_2'}^2\nonumber\\
&+\frac{2\frac{\tilde{\epsilon}^2}{\epsilon}\gamma_{1}\ell_{1}(\delta_{2}-\varepsilon_{2})}{\left(1-\tilde{\epsilon}^2\gamma_{1}\gamma_{2}\right)\left(1-\tilde{\epsilon}^2\ell_{1}\ell_{2}\right)}\norm{\begin{bmatrix}
	e_1\\e_1'
	\end{bmatrix}}^2\nonumber\\
&+\frac{2(\delta_{1}-\varepsilon_{1})\left(1 + \frac{\tilde{\epsilon}^2(1-\epsilon)}{\epsilon}\ell_1\ell_2 \right)}{\left(1-\tilde{\epsilon}^2\gamma_{1}\gamma_{2}\right)\left(1-\tilde{\epsilon}^2\ell_{1}\ell_{2}\right)}\norm{\begin{bmatrix}
	e_1\\e_1'
	\end{bmatrix}}^2\nonumber\\
&+\frac{2\frac{\tilde{\epsilon}}{\epsilon}\ell_{2}(\delta_{1}-\varepsilon_{1})  }{\left(1-\tilde{\epsilon}^2\gamma_{1}\gamma_{2}\right)\left(1-\tilde{\epsilon}^2\ell_{1}\ell_{2}\right)}\norm{\begin{bmatrix}
	e_2\\e_2'
	\end{bmatrix}}^2\nonumber\\
&+\frac{2\tilde{\epsilon}\gamma_{1}(\delta_{2}-\varepsilon_{2})\left(1 + \frac{\tilde{\epsilon}^2(1-\epsilon)}{\epsilon}\ell_1\ell_2\right)}{\left(1-\tilde{\epsilon}^2\gamma_{1}\gamma_{2}\right)\left(1-\tilde{\epsilon}^2\ell_{1}\ell_{2}\right)}\norm{\begin{bmatrix}
	e_2\\e_2'
	\end{bmatrix}}^2\nonumber\\
&+(\tilde{\mu}_{1}-\tilde{\varepsilon}_{1})\norm{\begin{bmatrix}
	d_1\\d_2
	\end{bmatrix}}^2-\tilde{\eta}_{1}\norm{\begin{bmatrix}
	d_1'\\d_2'
	\end{bmatrix}}^2, \label{PropositionFeedbackAux6}
\end{align}
\endgroup
for some constants $\tilde{\varepsilon}_{1},\tilde{\eta}_{1},\tilde{\mu}_{1}>0$.
A similar procedure can be applied for $\norm{y_2 - y_2'}^2$, such that it is clear from Definition~\ref{Definition} that $\bm{\Sigma}_1 \parallel_{\text{f}} \bm{\Sigma}_2 \preccurlyeq_{\gamma_f,\delta_f} \bm{\Sigma}_{1}' \parallel_{\text{f}} \bm{\Sigma}_{2}'$ with
\begingroup
\allowdisplaybreaks
\begin{align*}
\gamma_f &= \frac{\frac{\tilde{\epsilon}}{\epsilon}\max\{\gamma_{1},\gamma_{2} \} + \frac{\tilde{\epsilon}^2}{\epsilon}\gamma_{1}\gamma_{2}}{1-\gamma_{1}\gamma_{2}\tilde{\epsilon}^2},\\
\delta_f &= \frac{2\frac{\tilde{\epsilon}}{\epsilon}\max\{\delta_1, \delta_2 \}\max\{\ell_1,\ell_2\}\left(1+\tilde{\epsilon}\max\{\gamma_1,\gamma_2\} \right) }{\left(1-\tilde{\epsilon}^2\gamma_{1}\gamma_{2})(1-\tilde{\epsilon}^2\ell_{1}\ell_{2}\right)}\\
&\;\;\;+\frac{2\max\{\delta_1, \delta_2 \}\left(1+\tilde{\epsilon}\max\{\gamma_1,\gamma_2\} \right)\left(1+\frac{\tilde{\epsilon}^2(1-\epsilon)}{\epsilon}\ell_1\ell_2 \right)}{\left(1-\tilde{\epsilon}^2\gamma_{1}\gamma_{2})(1-\tilde{\epsilon}^2\ell_{1}\ell_{2}\right)},
\end{align*}
\endgroup
which proves the desired result. 
\bibliographystyle{ieeetr}
\bibliography{Reference}

\begin{IEEEbiography}[{\includegraphics[width=1in,height=1.25in,clip,keepaspectratio]{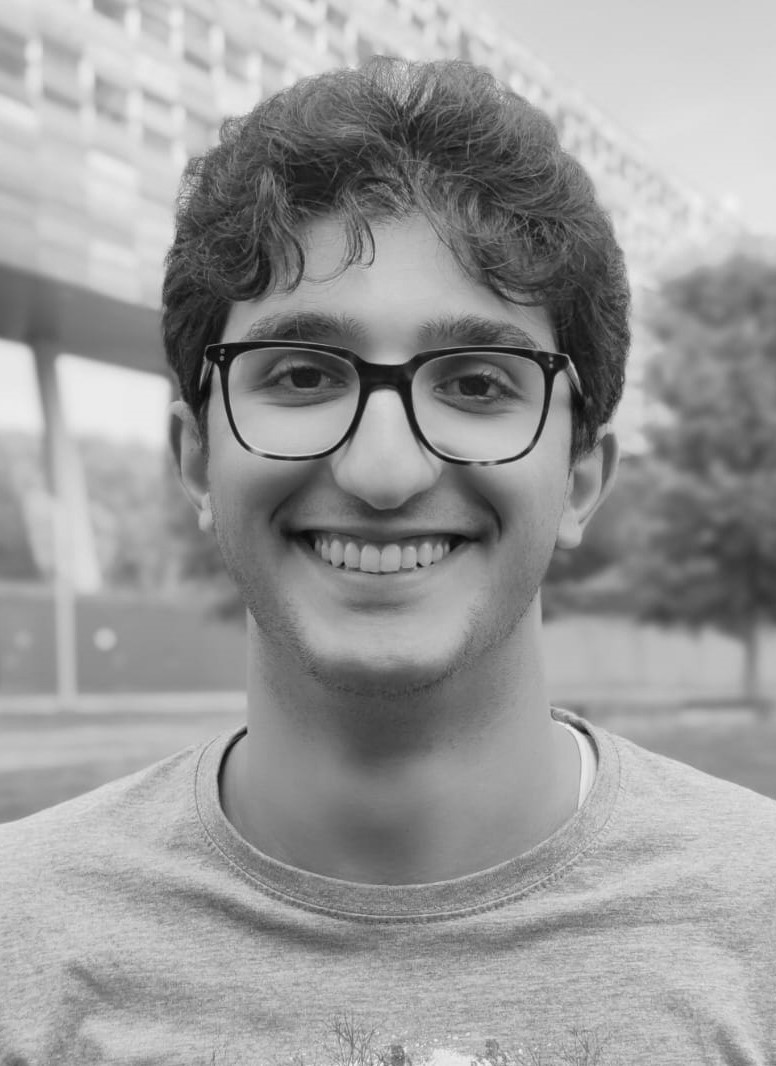}}]{Armin Pirastehzad} received the B.Sc. and
	M.Sc. degrees in electrical engineering from the
	University of Tehran, Tehran, Iran, in 2017 and
	2020, respectively. He is currently pursuing his Ph.D. studies in systems and control at the Bernoulli Institute for Mathematics, Computer Science and Artificial Intelligence, University of Groningen, Groningen, The Netherlands. His research interests include contract theory, modular system design, output regulation theory, and approximation
	techniques in control theory.
\end{IEEEbiography}
\vspace*{-1cm}
\begin{IEEEbiography}[{\includegraphics[width=1in,height=1.25in,clip,keepaspectratio]{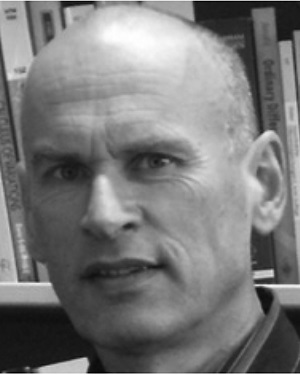}}]{Arjan van der Schaft} (Fellow, IEEE) received the Undergraduate and
	Ph.D. degrees in mathematics from the University of Groningen, Groningen,
	Netherlands, in 1979 and 1983, respectively.
	
	In 1982, he joined the Department of Applied Mathematics, University
	of Twente, Enschede, The Netherlands, where he was a Full Professor in
	Mathematical Systems and Control Theory in 2000. In September 2005,
	he joined the University of Groningen as a Professor in Mathematics. He
	has authored various books including Variational and Hamiltonian Control
	Systems (Springer, 1987, with P. E. Crouch), Nonlinear Dynamical Control
	Systems (Springer, 1990, 2016, with H. Nijmeijer), L2-Gain and Passivity
	Techniques in Nonlinear Control (Springer, 1996, 2000, 2017), An Introduction to Hybrid Dynamical Systems (Springer, 2000, with J.M. Schmacher),
	and Port-Hamiltonian Systems: An Introductory Overview (NOW Publisher,
	2014, with D. Jeltsema).
	
	Dr. Van der Schaft is a Fellow of the International Federation
	of Automatic Control (IFAC), and was the 2013 recipient of the three-yearly
	awarded Certificate of Excellent Achievements of the IFAC Technical Committee on Nonlinear Systems. He was an Invited Speaker at the International
	Congress of Mathematicians, Madrid, 2006.

\end{IEEEbiography}
\vspace*{-1cm}
\begin{IEEEbiography}[{\includegraphics[width=1in,height=1.25in,clip,keepaspectratio]{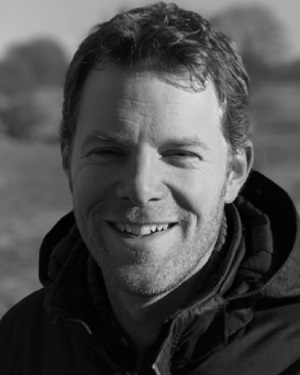}}]{Bart Besselink} (Member, IEEE) received the M.Sc. (cum laude) degree in mechanical engineering in 2008 and the Ph.D. degree in 2012, both from Eindhoven University of Technology, Eindhoven, The Netherlands.
	
Since 2016, he has been with the Bernoulli Institute for Mathematics, Computer Science and Artificial Intelligence, University of Groningen, Groningen, The Netherlands, where he is currently an associate professor. He was a short-term Visiting Researcher with the Tokyo Institute of Technology, Tokyo, Japan, in 2012. Between 2012 and 2016, he was a Postdoctoral Researcher with the ACCESS Linnaeus Centre and Department of Automatic Control, KTH Royal Institute of Technology, Stockholm, Sweden.
	
His main research interests are on mathematical systems theory for large-scale interconnected systems, with emphasis on contract-based design and control, compositional analysis, model reduction, and applications in intelligent transportation systems and neuromorphic computing. He is a recipient (with Xiaodong Cheng and Jacquelien Scherpen) of the 2020 Automatica Paper Prize.
\end{IEEEbiography}

\end{document}